\numberwithin{equation}{section}
\newtheorem{mylemma}{Lemma}[section]
\newtheorem{mytheorem}{Theorem}[section]
\newtheorem{myproposition}{Proposition}[section]
\newtheorem{myremark}{Remark}[section]
\newtheorem{mydefinition}{Definition}[section]
\begin{document}
	
	\providecommand{\keywords}[1]{{\noindent \textit{Key words:}} #1}
	\providecommand{\msc}[1]{{\noindent \textit{Mathematics Subject Classification:}} #1}
	
	\title{Global-in-time energy stability: a powerful analysis tool for the gradient flow problem without maximum principle or Lipschitz assumption\\\vskip 0.8cm}

\author[1]{Jingwei Sun}
\author[1] {Haifeng Wang}
\author[1] {Hong Zhang\thanks{Corresponding author. \\E-mail address: \texttt{zhanghnudt@163.com}.}}
\author[2]{Xu Qian}
\author[1]{Songhe Song}
\affil[1]{Department of Mathematics, National University of Defense Technology, Changsha 410073, Hunan Province, China.}
\affil[2]{College of Mathematics Science, University of Wuhan, Wuhan 430072, Hubei Province, China.}
	\date{}
	
	\maketitle

\begin{abstract}
\noindent  Before proving (unconditional) energy stability for gradient flows, most existing studies either require a strong Lipschitz condition regarding the non-linearity or certain $L^{\infty}$ bounds on the numerical solutions (the maximum principle). However, proving energy stability without such premises is a very challenging task. In this paper, we aim to develop a novel analytical tool, namely global-in-time energy stability, to demonstrate energy dissipation without assuming any strong Lipschitz condition or $L^{\infty}$ boundedness. The fourth-order-in-space Swift-Hohenberg equation is used to elucidate the theoretical results in detail. We also propose a temporal second-order accurate scheme for efficiently solving such a strongly stiff equation. Furthermore, we present the corresponding optimal $L^2$ error estimate and provide several numerical simulations to demonstrate the dynamics.
\end{abstract}

\msc{Primary 35K30, 35K55, 65L06, 65T40}\\
\keywords{Global-in-time energy stability, gradient flow, Lispchitz assumption, maximum principle, original energy, exponential Runge--Kutta, Swift--Hohenberg equation.}

\section{Introduction}
\label{intro}

Many physical problems can be modeled by PDEs that take the form of gradient flows, which are often derived from the second law of thermodynamics. It is well-known that a gradient flow is determined by not only the driving free energy but also the dissipation mechanism. Given a free energy functional $E(u)$, denote its variational derivative as $\mu=\delta E/\delta u$. The general form of the gradient flow can be written as 
\begin{equation}\label{eqn:gradientflow}
	\frac{\partial u}{\partial t}= \mathcal{G}\mu,
\end{equation}
equipped with suitable boundary conditions. In the above, a non-positive symmetric operator $\mathcal{G}$ gives the dissipation mechanism, thus the free energy is non-increasing:
\begin{equation}\label{eqn:energydiss_of_gradientflow}
	\frac{\mathrm{d}E}{\mathrm{d}t}=\langle\mu,\mathcal{G}\mu\rangle\le0,
\end{equation} 
where $\langle\phi,\psi\rangle=\int_{\Omega}\phi\cdot\psi\mathrm{d}\mathbf{x}$. Familiar dissipative operators $\mathcal{G}$ include but not limited to $-I,~\Delta,~-(-\Delta)^{\alpha}~(0<\alpha<1)$, corresponding to $L^2,~H^{-1}$ and nonlocal $H^{-\alpha}$ gradient flows, respectively. In general, the free energy functional contains linear and nonlinear terms, which we write explicitly as 
\begin{equation}\label{eqn:energy_of_gradientflow}
	E(u) = \frac{1}{2}\langle u,\mathcal{L}u\rangle + \langle F(u),1\rangle, 
\end{equation}
where $\mathcal{L}$ is a symmetric non-negative linear operator, and $F(u)$ represents the nonlinear potential. 
In particular, if the energy functional associated with \eqref{eqn:gradientflow} is 
\begin{equation}\label{eqn:energyofCH}
	E_{\varepsilon}(u) = \int_{\Omega}\left(\frac{\varepsilon^2}{2}|\nabla u|^2 + F(u)\right)\,\mathrm{d}\mathbf{x},
\end{equation} 
where $\varepsilon$ is a certain positive parameter, the well-known Allen--Cahn (AC, $L^2$ gradient flow) and Cahn--Hilliard (CH, $H^{-1}$ gradient flow) equations could be derived with different nonlinear potentials. Moreover, it follows from \eqref{eqn:energydiss_of_gradientflow} that 
\begin{equation*}
	E_{\varepsilon}(u(t))\leq E_{\varepsilon}(u(s)),\quad\forall t\leq s,
\end{equation*}
which gives a prior control of $H^1$-norm of the solution. 

The energy dissipation property \eqref{eqn:energydiss_of_gradientflow} is always viewed as a key criterion for designing efficient and long-time stable numerical schemes, such as \cite{chen2012linear,cheng2008efficient,sun2023family,LiX2024a,fuzhaohui,lee2019energy}. Among them, Shen and Yang \cite{trancated3} proved energy stability for the AC and CH equations by a nonlinear term truncation technique. More precisely, it is assumed that 
\begin{equation*}
	\max\limits_{u\in\mathbb{R}}|\tilde{f}{'}(u)|\leq \beta,~\tilde{f}{'}(u)~\text{is a suitable modification of the original function derivative}~ f{'}(u),
\end{equation*}
which is what we referred to as the Lipschitz assumption on the non-linearity in the abstract. Also, see the same strategy for example \cite{fengx,trancated1,trancated2,trancated4} and the references therein. 
As Li and Qiao \cite{lidongjsc} mentioned, the main drawback of the aforementioned numerical developments is that to obtain energy stability, one either makes a Lipschitz assumption on the non-linearity such as \cite{trancated3}, or one assumes some additional $L^{\infty}$ bounds on the numerical solution, which is automatically satisfied by AC type equations with the standard maximum principle. It is very desirable to remove these technical obstacles and establish a more reasonable energy stability theory. Therefore, \textit{the purpose of this work is to prove energy stability on applying an efficient scheme to the gradient flow without maximum principle or any Lipschitz assumption}. 

In fact, very few work has been devoted to such analysis. This is partly due to some technical difficulties related to the $H^m$-norm $(m\ge1)$ prior control and $L^{\infty}$ bounds of the solutions, making it difficult to handle nonlinear terms optimally. Notably, by utilizing a log-type interpolation inequality, Li et al. \cite{lidongjsc,lidongsinum} provided rigorous analyses for the CH equation several years ago, employing first-order stabilized semi-implicit (SSI1) and second-order backward differential formula (BDF2) time-stepping schemes. In this paper, however, we shall adopt a different approach by considering another gradient flow, the Swift--Hohenberg (SH, $L^2$ gradient flow) equation, using a second-order accurate, energy-stable exponential-type scheme. Our \textbf{exponential-type} scheme effectively addresses the severe stiffness introduced by the biharmonic linear term and achieves second-order accuracy in a single step. Furthermore, unlike the previous studies \cite{lidongjsc,lidongsinum}, our analytical framework leverages the specific energy structure and employs specialized operator estimations to achieve the desired theoretical results. This framework is also applicable to the phase field crystal (PFC, $H^{-1}$ gradient flow) equation \cite{pfc}, which we leave the discussion in a forthcoming paper.

The SH equation \cite{swift1977} is widely used in the study of phenomena such as Rayleigh-Bénard convection and more elaborate density functional theories of liquid interfaces \cite{hohenberg1992,cross1993,rosa2000,hutt2005,hutt2008}. It differs from the classical AC and CH systems in that the stable phase is periodic, and is built with the following Lyapunov energy functional (cf. \cite{swift1977,dehghan2016,lee2016}):
\begin{equation}\label{1.2}
	E(u) = \int_{\Omega}\frac{1}{2}u\underbrace{(\Delta + 1)^2}_{\mathcal{L}}u + \underbrace{\frac{1}{4}u^4 - \frac{\varepsilon}{2}u^2}_{F(u)}\,\mathrm{d}\mathbf{x},
\end{equation}
which leads to the following fourth-order-in-space system:
\begin{equation}\label{1.1}
	\frac{\partial u}{\partial t}= -(\Delta +1)^2u - f(u),\quad (\mathbf{x}, t)\in\Omega\times (0,T],
\end{equation}
where $f(u) = u^3-\varepsilon u$, $\Omega\subset\mathbb{R}^2$, $u:\Omega\to\mathbb{R}$ stands for the order parameter, and $\varepsilon$ is a positive constant with certain physical significance. Similar to the PFC equation, the SH equation can describe many basic properties of polycrystalline materials that arise during non-equilibrium processing. The equation of motion governing these non-equilibrium phenomena is a nonlinear PDE that generally cannot be solved analytically for random initial conditions. Therefore, numerous efforts have been made to design appropriate numerical methods to help researchers understand and characterize non-equilibrium phenomena, see, e.g., \cite{lee2019energy,yang2021conservative,sujian,yang2021linear,liu2023efficient,LiX2024a} and the references therein.

A significant challenge is that obtaining energy dissipation using explicit schemes is very difficult due to severe time-step restrictions. To ensure the decay of the total energy while employing a moderately large time-step size, a feasible choice is to use implicit-explicit (IMEX) schemes, where the linear part is treated implicitly and the nonlinear part is evaluated explicitly. Motivated by this idea and Duhamel's formula with respect to \eqref{eqn:gradientflow}:
\begin{equation}\label{eqn}
	u(t) = \mathrm{e}^{tG}u(t_0) + \int_{t_0}^t \mathrm{e}^{G(t-s)}f(u(s))\,\mathrm{d}s,
\end{equation}
where $\mathrm{e}^{tG}$ is the standard semi-group (kernel) with $G$ as the spatial discretization of $\mathcal{G}$, researchers have widely used energy-stable exponential-type methods to deal with various gradient flows. For instance, Ju et al. \cite{ju2019,ju2016,li2021} used first- and second-order exponential time differencing Runge-Kutta (ETDRK) methods to present numerical analyses for AC and CH equations, while Chen et al. \cite{chenwenbin1,chenwenbin2} conducted extensive research on the application of stabilization ETD multi-step (sETDMS) schemes for thin film models. Building on these remarkable studies, we shall propose an improved second-order exponential Runge-Kutta method, namely ERK\textit{(2,2)} \eqref{2.6}. Furthermore, \textbf{original} energy stability of the SH equation \eqref{1.1} is rigorously proved by our proposed method, while multi-step methods only achieve modified energy stability, which includes a few correction terms that cannot be removed.

We now state our main results by three steps as follows. \\
\noindent\textbf{\textit{Step 1}}: by assuming $l^{\infty}$ bounds of the numerical solutions, we are able to derive original energy stability of the SH equation:
\begin{mytheorem}[Original energy stability]\label{theorem3.1}
	The ERK(2,2) scheme \eqref{2.6} unconditionally preserves the original energy of the SH equation; that is,
	\begin{equation*}
		E_{N}(u^{n+1})\leq E_{N}(u^n),\quad \forall \tau>0,~0\leq n\leq N_t-1,
	\end{equation*}
	provided that
	\begin{equation}\label{classic_kappa}
		\kappa\ge\max\limits_{|\xi|\le\beta}\frac{|3\xi^2 - \varepsilon|}{2}~\mbox{with}~\beta:=\max_{0\leq n\leq N_t,~i=0,1,2}\|u_{n,i}\|_{\ell^{\infty}}.
	\end{equation}
\end{mytheorem}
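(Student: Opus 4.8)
The plan is to recast the scheme \eqref{2.6} as a stabilized, preconditioned gradient step and then bound the energy increment by a spectrally diagonal quadratic form. First I would introduce the stabilized operator $\mathcal{L}_{\kappa}=(\Delta+1)^2+\kappa I$, symmetric and positive definite, together with the modified nonlinearity $\mathcal{N}(u)=\kappa u-f(u)$, so that \eqref{1.1} becomes $u_t=-\mathcal{L}_{\kappa}u+\mathcal{N}(u)$ and $-\mathcal{L}_{\kappa}u+\mathcal{N}(u)=-\nabla E_{N}(u)$ with discrete variational derivative $\nabla E_{N}(u)=\mathcal{L}u+f(u)$, $\mathcal{L}$ being the discrete $(\Delta+1)^2$. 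After spatial discretization all these operators are simultaneously diagonalized by the Fourier basis, with $\mathcal{L}$ carrying eigenvalues $\lambda\geq0$ and $\mathcal{L}_{\kappa}$ carrying eigenvalues $\lambda+\kappa\geq\kappa>0$; this commuting structure is what eventually reduces the whole estimate to scalar inequalities in $\lambda$ and $\tau$.

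The key preliminary step is an exact algebraic identity for the predictor increment. Using $\tau\phi_1(-\tau\mathcal{L}_{\kappa})=\mathcal{L}_{\kappa}^{-1}(I-\mathrm{e}^{-\tau\mathcal{L}_{\kappa}})$ and $-\mathcal{L}_{\kappa}u^{n}+\mathcal{N}(u^{n})=-\nabla E_{N}(u^{n})$, the predictor stage of \eqref{2.6} can be rewritten as
\[
u_{n,1}-u^{n}=-\mathcal{L}_{\kappa}^{-1}\bigl(I-\mathrm{e}^{-\tau\mathcal{L}_{\kappa}}\bigr)\nabla E_{N}(u^{n})=:-M\,\nabla E_{N}(u^{n}),
\]
where $M$ is symmetric positive definite since $\mathcal{L}_{\kappa}^{-1}$ and $I-\mathrm{e}^{-\tau\mathcal{L}_{\kappa}}$ commute and are both positive. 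Writing the full increment as $\delta:=u^{n+1}-u^{n}=-M\,\nabla E_{N}(u^{n})+r$, with corrector remainder $r:=\tau\phi_2(-\tau\mathcal{L}_{\kappa})\bigl[\mathcal{N}(u_{n,1})-\mathcal{N}(u^{n})\bigr]$, I would then Taylor-expand the energy,
\[
E_{N}(u^{n+1})-E_{N}(u^{n})=\bigl\langle\nabla E_{N}(u^{n}),\delta\bigr\rangle+\tfrac12\bigl\langle\delta,\mathcal{H}_{\xi}\,\delta\bigr\rangle,
\]
where $\mathcal{H}_{\xi}=\mathcal{L}+\mathrm{diag}\bigl(f'(\xi)\bigr)$ is the Hessian at an intermediate profile $\xi$ lying entrywise between $u^{n}$ and $u^{n+1}$. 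Since every stage obeys $\|u_{n,i}\|_{\ell^{\infty}}\leq\beta$, the profile $\xi$ also satisfies $\|\xi\|_{\ell^{\infty}}\leq\beta$, so \eqref{classic_kappa} gives $|f'(\xi)|=|3\xi^2-\varepsilon|\leq2\kappa$ and hence the operator bound $\mathcal{H}_{\xi}\leq\mathcal{L}+2\kappa I$.

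Setting $g:=\nabla E_{N}(u^{n})$ and substituting $\delta=-Mg+r$, the leading (predictor) part of the increment is $-\langle g,Mg\rangle+\tfrac12\langle Mg,\mathcal{H}_{\xi}Mg\rangle$, which the Hessian bound controls by $-\langle g,Mg\rangle+\tfrac12\langle Mg,(\mathcal{L}+2\kappa I)Mg\rangle$. Because $\mathcal{L}+2\kappa I$, $M$ and $\mathcal{L}$ are all functions of $\mathcal{L}_\kappa$, this last expression can be diagonalized in the Fourier basis, and its nonpositivity amounts to the scalar requirement $m(\lambda+2\kappa)\leq2$ for every eigenvalue $\lambda\geq0$, where $m=(1-\mathrm{e}^{-\tau(\lambda+\kappa)})/(\lambda+\kappa)$. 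This holds for all $\tau>0$ because $1-\mathrm{e}^{-x}\leq1$ and $(\lambda+2\kappa)/(\lambda+\kappa)\leq2$ whenever $\lambda\geq0$; it is precisely here that the sharp factor $1/2$ appearing in \eqref{classic_kappa} is produced.

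The main obstacle is the second-order corrector. Substituting $\delta=-Mg+r$ leaves the three cross terms $\langle g,r\rangle-\langle Mg,\mathcal{H}_{\xi}r\rangle+\tfrac12\langle r,\mathcal{H}_{\xi}r\rangle$, which spoil the clean gradient identity and must be shown not to destroy the decay. I expect the crux to be estimating these terms uniformly in $\tau$: using the $2\kappa$-Lipschitz bound of $\mathcal{N}$ on $[-\beta,\beta]$ to re-express $\mathcal{N}(u_{n,1})-\mathcal{N}(u^{n})$ through $u_{n,1}-u^{n}=-Mg$, reapplying $\mathcal{H}_{\xi}\leq\mathcal{L}+2\kappa I$ to the non-commuting Hessian factors, and then verifying, mode by mode, that the resulting scalar function of $\mathrm{e}^{-\tau(\lambda+\kappa)}$ assembled from $\phi_1$ and $\phi_2$ remains nonpositive for all $\lambda\geq0$ and all $\tau>0$. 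Establishing this single uniform spectral estimate — rather than the energy bookkeeping, which is routine once the identity and the Hessian bound are in place — is where the particular structure of the proposed ERK\textit{(2,2)} scheme must be exploited, and it is the step I would concentrate on.
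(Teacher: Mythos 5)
Your proposal sets up a reasonable framework (stabilized operator, Taylor expansion of the energy with the Hessian bound $\mathcal{H}_{\xi}\leq\mathcal{L}+2\kappa I$ coming from \eqref{classic_kappa}, spectral diagonalization), and your treatment of the leading term $-\langle g,Mg\rangle+\tfrac12\langle Mg,(\mathcal{L}+2\kappa I)Mg\rangle$ is correct. But there is a genuine gap: the entire difficulty of the two-stage scheme is concentrated in the corrector cross terms $\langle g,r\rangle-\langle Mg,\mathcal{H}_{\xi}r\rangle+\tfrac12\langle r,\mathcal{H}_{\xi}r\rangle$, and you explicitly defer this step rather than prove it. The plan you sketch for closing it --- replacing $N_{\kappa}(u_{n,1})-N_{\kappa}(u^{n})$ by a $2\kappa$-Lipschitz bound and then checking a scalar function mode by mode --- is unlikely to succeed as stated: a Lipschitz bound controls only the magnitude of $r$, not its sign, so the indefinite term $\langle g,r\rangle$ can only be absorbed by Cauchy--Schwarz against the leading $-\langle g,Mg\rangle$, which costs a positive multiple of $\|g\|_2^2$ and forces a time-step restriction, contradicting the claimed unconditional ($\forall\tau>0$) result. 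There are also two concrete mismatches with the scheme \eqref{2.6}: the predictor uses $c_1=\tfrac12$, so $M$ at that stage involves $I-\mathrm{e}^{-\tau L_{\kappa}/2}$, not $I-\mathrm{e}^{-\tau L_{\kappa}}$; and your remainder $r$ carries the weight $\varphi_2$, which is the ETDRK2 tableau \eqref{eqn:etdrk2}, whereas ERK\textit{(2,2)} has $b_2=\varphi_1$ and $b_1=0$.

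The paper closes the argument by a mechanism your proposal never invokes: it telescopes through the energy of the intermediate stage, writing $E_{N}(u^{n+1})-E_{N}(u^{n})=[E_{N}(u^{n+1})-E_{N}(u_{n,1})]+[E_{N}(u_{n,1})-E_{N}(u^{n})]$, and applies to each piece the convexity-type bound $E_{N}(v)-E_{N}(w)\leq\langle v-w,\,L_{\kappa}v-N_{\kappa}(w)\rangle$ (valid under \eqref{classic_kappa}). Because each stage of \eqref{2.6} evaluates the nonlinearity only at the \emph{previous} stage, the quantity $L_{\kappa}v-N_{\kappa}(w)$ can be rewritten, using the scheme itself, as a negative semi-definite function of $L_{\kappa}$ applied to an increment; the nonlinear difference $N_{\kappa}(u_{n,1})-N_{\kappa}(u^{n})$ never appears. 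What remains is a purely algebraic regrouping of three quadratic forms, whose nonpositivity reduces to the scalar inequalities $h_1(z)=z\mathrm{e}^{-z}/(\mathrm{e}^{-z}-1)\leq0$ and $h_1(z/2)-\tfrac14 h_1(z)\leq0$. If you want to salvage your route, the missing idea is precisely this intermediate-energy splitting; without it, the corrector remainder cannot be given a sign uniformly in $\tau$.
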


To finish the complete proof, it is necessary to recover the $l^{\infty}$ boundedness of $u^{n+1}$. \\
\noindent\textbf{\textit{Step 2}}: We suppose energy stable in the first $n$ steps to obtain an upper bound on the discrete energy, i.e. $E_N(u^n)\leq E_N(u^{n-1})\leq\ldots\leq E_N(u^0)=:C_e$ with $C_e$ a constant. Subsequently, with the help of many state-of-art theoretical analysis techniques, such as the discrete Sobolev inequality, elliptic regularity, as well as repeated eigenvalue estimates for various Fourier-space operators at the RK stages, we are able to establish $\ell^2$ and $H_h^2$ bounds of the numerical solutions at every RK stages. 

\noindent\textbf{\textit{Step 3}}: In turn, $\|u^{n+1}\|_{l^{\infty}}$ becomes a direct consequence of an application of discrete Sobolev embedding. Such an $\ell^\infty$ bound enables us to derive the following desirable result under a certain time-step size constraint:
\begin{mytheorem}[Global-in-time energy stability]\label{theorem3.2}
	With the chosen stabilization parameter in \eqref{classic_kappa}, which only depends on the parameter $\varepsilon$, initial energy $E_0$ and domain $\Omega$, we select a time-step size that satisfies the following $\mathcal{O}(1)$ constraint: 
	\begin{equation*}
		\tau\leq\min\left\{\frac{1}{256},\frac{1}{64}\tilde{C}_1^{-4},(64\kappa)^{-\frac{1}{2}},\frac{1}{4}\tilde{C}_0^{-2}\kappa^{-\frac{1}{2}}\right\},
	\end{equation*} 
	where both $\tilde{C}_1$ and $\tilde{C}_0$ are global-in-time constants. Then the numerical solution $\{u^n\}_{0\leq n\leq N_t}$ produced by the ERK\textit{(2,2)} scheme \eqref{2.6} always satisfies $E_N(u^{n+1})\leq E_N(u^n)$.
\end{mytheorem}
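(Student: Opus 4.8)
The plan is to argue by induction on $n$, closing the circular dependence between energy dissipation and the $\ell^\infty$ bound that is built into \cref{theorem3.1}. The induction hypothesis is precisely the one announced in \textit{Step 2}: assume $E_N(u^k)\le E_N(u^0)=:C_e$ for all $0\le k\le n$, the base case $n=0$ being trivial. By \cref{theorem3.1}, the desired inequality $E_N(u^{n+1})\le E_N(u^n)$ follows as soon as $\kappa$ satisfies \eqref{classic_kappa} for some bound $\beta$ on $\|u_{n,i}\|_{\ell^\infty}$ over the current stages. The entire difficulty is thus to convert the \emph{energy} bound $C_e$ into a \emph{global-in-time} $\ell^\infty$ bound $\beta$ that does not depend on $n$, so that a single $\kappa$, fixed only through $\varepsilon$, $C_e$ and $\Omega$, works uniformly for every step. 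Once such a $\beta$ is available, \eqref{classic_kappa} holds for all $n$, the inductive step closes, and $E_N(u^{n+1})\le C_e$ is propagated.

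For \textit{Step 2} I would first extract an a priori $H_h^2$ bound on $u^n$ from the energy. Writing $\tfrac12\langle u,(\Delta_h+1)^2u\rangle=\tfrac12\|(\Delta_h+1)u\|_{\ell^2}^2=\tfrac12\bigl(\|\Delta_h u\|_{\ell^2}^2-2\|\nabla_h u\|_{\ell^2}^2+\|u\|_{\ell^2}^2\bigr)$ and absorbing the indefinite term via $-2\|\nabla_h u\|_{\ell^2}^2\ge-\delta\|\Delta_h u\|_{\ell^2}^2-\delta^{-1}\|u\|_{\ell^2}^2$ (Young), while dominating the remaining quadratic contributions, including $-\tfrac{\varepsilon}{2}\|u\|_{\ell^2}^2$, by the coercive quartic $\tfrac14\|u\|_{\ell^4}^4$ through $u^2\le\eta u^4+\tfrac1{4\eta}$, one obtains $E_N(u)\ge c_1\|\Delta_h u\|_{\ell^2}^2+c_2\|u\|_{\ell^4}^4-C_3$ with $c_1,c_2>0$. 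Hence $E_N(u^n)\le C_e$ yields $\|u^n\|_{H_h^2}\le\tilde C_0$ with $\tilde C_0$ depending only on $\varepsilon,C_e,\Omega$. Next I would propagate this to the RK stages and to $u^{n+1}$: using the exponential structure of \eqref{2.6} (the stagewise Duhamel representation \eqref{eqn}), each quantity is an explicit combination of $\mathrm e^{\tau G}$- and $\varphi$-type operators acting on $u^n$ and on the cubic evaluations $f(u_{n,i})$. Since $G$ is the stabilized fourth-order operator, its Fourier multipliers are uniformly bounded and, crucially, smoothing; multiplied by the fourth-order $H_h^2$ weight their symbols stay $\mathcal O(1)$ once $\tau$ is $\mathcal O(1)$. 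Combining these eigenvalue estimates with elliptic regularity and the cubic estimate $\|f(v)\|_{\ell^2}\lesssim(1+\|v\|_{\ell^\infty}^2)\|v\|_{\ell^2}$, a short recursion over the finitely many stages gives $\ell^2$ and then $H_h^2$ bounds on all $u_{n,i}$ and on $u^{n+1}$, uniformly in $n$. This is exactly where the stated $\mathcal O(1)$ constraint $\tau\le\min\{\tfrac1{256},\ldots\}$ enters: it guarantees that the stage recursion contracts and that the global-in-time constants $\tilde C_1,\tilde C_0$ (and the $\kappa$-dependent thresholds) do not accumulate with $n$.

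For \textit{Step 3}, with $\|u_{n,i}\|_{H_h^2}$ bounded uniformly, the discrete Sobolev embedding $H_h^2\hookrightarrow\ell^\infty$, valid in dimension two since $2>d/2=1$, gives $\|u_{n,i}\|_{\ell^\infty}\le\tilde C\,\|u_{n,i}\|_{H_h^2}\le\beta$ with $\beta$ a genuine global-in-time constant. Choosing $\kappa$ as in \eqref{classic_kappa} for this $\beta$ then renders \cref{theorem3.1} applicable to the step $u^n\mapsto u^{n+1}$, which yields $E_N(u^{n+1})\le E_N(u^n)$ and in particular $E_N(u^{n+1})\le C_e$, closing the induction.

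The heart of the argument, and its main obstacle, is \textit{Step 2}: upgrading a merely \emph{energetic} (hence $H_h^2$-level) control of $u^n$ into uniform $H_h^2$ control at \emph{all} RK stages, including the output $u^{n+1}$, using only a non-vanishing $\mathcal O(1)$ time step. The cubic term must be handled at the right regularity so that the stage recursion is contractive rather than expansive, and every constant produced along the way must be verified to be independent of $n$ and of the final time $T$; otherwise $\beta$ would degrade with $n$ and the single uniform choice of $\kappa$ would fail. The precise Fourier-space eigenvalue estimates for the $\varphi$-operators at each stage, together with the calibration of the four thresholds in the time-step constraint against $\tilde C_0$, $\tilde C_1$ and $\kappa$, are the technical crux.
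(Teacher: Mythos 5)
Your overall architecture is the right one and matches the paper's: induct on $n$, use the \textit{a-priori} energy bound $E_N(u^n)\le C_e$ to get $\ell^2$ and $H_h^2$ control of $u^n$ (the paper invokes Lemma 3.7 of \cite{wise2009energy}, which is the coercivity computation you sketch), propagate to the stages, apply the discrete Sobolev embedding \Cref{lemma3.2} to fix a global $\beta$ and hence $\kappa$, and then close the induction via \Cref{theorem3.1}. However, the entire technical content of the theorem lives in the step you leave as a sketch, and the mechanism you propose for it is not the paper's and is not shown to deliver the stated time-step thresholds. The paper does \emph{not} bound the stages by multiplier/smoothing estimates on the Duhamel form; it rewrites each stage as the two-substage system \eqref{eqn:substage_1}--\eqref{eqn:substage_2}, tests against $(I+\Delta_N^2)$-weighted quantities, and uses the purpose-built operators $G_i^{*}$, $G_i^{**}$, $G^{*}$, $G^{**}$ together with \Cref{proposition3.2,proposition3.3} to extract $\kappa$-weighted dissipation on the left-hand side that absorbs the $\kappa$-weighted part of $N_\kappa$. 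This absorption is exactly what makes the constraint scale like $\tau\lesssim\kappa^{-1/2}$ rather than the $\tau\lesssim\kappa^{-1}$ one would get from the crude bound $\tau\|\varphi_1(\tau L_\kappa)\kappa v\|_2\le\tau\kappa\|v\|_2$; your sketch asserts the symbols ``stay $\mathcal O(1)$'' without confronting the $\kappa u$ term in $N_\kappa$, which is precisely where the calibration is delicate.

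Two further points are missing or off. First, the paper needs an auxiliary estimate of $\|G_2^{1/2}u_{n,1}\|_2$ and $\|G_2^{1/2}\Delta_N u_{n,1}\|_2$ (inequality \eqref{eqn:3.2.3_final}) before treating $u^{n+1}$: in the second stage the term $2(\kappa-1)\langle G_2u_{n,1},(I+\Delta_N^2)u^{n+1}\rangle$ must be matched against the $(\kappa-1)$-weighted dissipation at step $n$, and replacing $\|G_2^{1/2}u_{n,1}\|_2$ by the plain bound $\|u_{n,1}\|_2$ would leave an unabsorbed $\mathcal O(\kappa\tau)$ remainder and destroy the $\kappa^{-1/2}$ scaling. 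Your recursion ``$\ell^2$ then $H_h^2$ over the stages'' has no analogue of this. Second, there is no contraction to be had or needed: the constants grow across the stages ($\tilde C_2\ge\tilde C_1\ge\tilde C_0$ in the paper), and uniformity in $n$ comes solely from the fact that the energy inequality at the end of each step resets the $H_h^2$ bound to the energy-derived $C_0$; describing the time-step restriction as guaranteeing that ``the stage recursion contracts'' misidentifies what the four thresholds in \eqref{eqn:second_tau} are doing (they make the $\tau$- and $\tau^2\kappa$-sized remainders small enough that $\|u^{n+1}\|_2^2+\|\Delta_N u^{n+1}\|_2^2\le 6C_0^2$). As written, the proposal is a correct road map with the decisive estimates asserted rather than established.
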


It is observed that with only one condition concerning the time step, the proof is established without additional assumptions. Moreover, the above two theorems complement each other, and the constants $\tilde{C}_1$ and $\tilde{C}_0$ do not depend on exponential factors involving the time parameter. Therefore, even if the total time is large, these factors will not cause instability. This is precisely why we refer to it as ``global-in-time energy stability".

The remainder of the paper is structured as follows. In Section \ref{sec2}, a fully discrete numerical scheme is devised, employing the Fourier spectral collocation spatial discretization and two-stage, second-order exponential Runge--Kutta temporal integration. The detailed proof of main results is provided in \Cref{sec3}, followed by an optimal rate convergence estimate in \Cref{sec4}. In \Cref{sec5}, we present some numerical results to illustrate the temporal accuracy and long-time dynamic performance of the proposed scheme. Moreover, some concluding remarks are made in \Cref{sec6}. The proofs of two essential propositions in \Cref{sec3} are placed in \Cref{secA,secB}.

\section{Fully discretization}
\label{sec2}
To simplify the presentation, a two-dimensional (2-D) domain is assumed, and an extension to the three-dimensional case could be similarly handled without an essential difficulty. Furthermore, we assume throughout the paper that periodic boundary conditions are chosen such that all boundary terms will vanish when integration by parts is performed; of course, an extension to the homogeneous Neumann boundary condition case is straightforward. 

\subsection{Review of the Fourier pseudo-spectral approximation}
We assume that the domain is given by $\Omega = [0,L]^2$, with a uniform mesh size: $N_x = N_y = N$, $N \cdot h= L$. The number of grid points (in each direction) is set as $N=2K+1$, and the case for an even $N$ could be similarly treated. All the spatial variables are evaluated on the $N\times N$ uniform mesh $\Omega_{N}$, in which $x_p=ph,~y_q=qh$, $0\leq p,q\leq N-1$, $h=\frac{L}{N}$. We also denote a uniform time step size $\tau = \frac{T}{N_t}$, where $N_t$ is a positive integer, and $t_n = n\tau$ for $0\leq n\leq N_t$. Let $\mathcal{M}_{N}$ denote the set of 2-D periodic grid functions defined on $\Omega_{N}$. For any gird functions $ {f}, {g}\in\mathcal{M}_{N}$, the discrete $\ell^2$ norm and inner product, and discrete $\ell^{\infty}$ norm are introduced as
\begin{equation}\nonumber
	\Vert {f}\Vert_{2} := \sqrt{\langle f,f\rangle } \quad \text{with} \quad \langle  {f}, {g}\rangle := h^2\sum_{p,q=0}^{N-1}f_{p,q}g_{p,q};\quad \Vert {f}\Vert_{\infty} := \max\limits_{p,q}|f_{p,q}|.
\end{equation} 
For $ {f}\in\mathcal{M}_{N}$, we set the discrete Fourier expansion as
\begin{equation}\label{2.1}
	f_{p,q} = \sum_{\ell,m=-K}^{K}\hat{f}_{\ell,m}\mathrm{e}^{2\pi \mathrm{i}(\ell x_p + my_q)/L} , \quad\text{with}~\mathrm{i}=\sqrt{-1},
\end{equation}
where the coefficients $\hat{f}_{\ell,m}$ are obtained by the discrete Fourier transform.

Although no aliasing error needs to be considered in the numerical analysis, due to the lack of spatial derivative terms in the nonlinear parts of the SH equation \eqref{1.1}, we have to introduce a periodic extension of a grid function and a Fourier collocation interpolation operator to facilitate the later analysis.
\begin{mydefinition}\label{definition2.1}
	For any $f\in\mathcal{M}_N$, we denote its periodic continuous extension into $\mathcal{B}^K$ (the space of trigonometric polynomials of degree at most $K$) as $f_S$, given by
	\begin{equation*}
		f_S(x,y) = S_N(f) = \sum_{\ell,m=-K}^K\hat{f}_{\ell,m}\mathrm{e}^{2\pi\mathrm{i}(\ell x+my)/L}.
	\end{equation*}
	We call $S_N:\mathcal{M}_{N}\to \mathcal{B}^K$ the spectral interpolation operator. 
\end{mydefinition}
\begin{mydefinition}\label{definition2.2}
	The discrete differentiation operators in the $x$-direction are defined as
	\begin{align*}
		&(\mathcal{D}_xf)_{p,q} := \sum_{\ell,m=-K}^{K}\frac{2\ell\pi\mathrm{i}}{L}\hat{f}_{\ell,m}\mathrm{e}^{2\pi\mathrm{i}(\ell x_p+ my_q)/L},\\
		&(\mathcal{D}_x^{2}f)_{p,q} := \sum_{\ell,m=-K}^{K}\frac{-4\ell^2\pi^2}{L^2}\hat{f}_{\ell,m}\mathrm{e}^{2\pi\mathrm{i}(\ell x_p+ my_q)/L}.
	\end{align*}
	The differentiation operators in the $y$-direction, $\mathcal{D}_y$ and $\mathcal{D}_y^{2}$, can be introduced in the same fashion. In turn, the discrete Laplacian, gradient and divergence operators are given by
	\begin{equation}\nonumber
		\Delta_{N} {f} := \mathcal{D}_x^2 {f} + \mathcal{D}_y^2 {f},\quad \nabla_{N} {f}:=\begin{pmatrix}
			\mathcal{D}_x {f},\\ \mathcal{D}_y {f}
		\end{pmatrix},\quad\nabla_{N}\cdot\begin{pmatrix}
			f_1\\f_2
		\end{pmatrix}:= \mathcal{D}_xf_1 + \mathcal{D}_yf_2 , 
	\end{equation}
	at the point-wise level.
\end{mydefinition}

Detailed calculations reveal that the following summation-by-parts formulas (cf. \cite{chen2012linear, chen2014linear, gottlieb2012long, gottlieb2012stability}) are valid .
\begin{myproposition}\label{proposition2.1}
	For any periodic grid functions $ {f}, {g}\in\mathcal{M}_{N}$, we have 
	\begin{displaymath}
		\langle {f},\Delta_{N} {g}\rangle  = -\langle \nabla_{N} {f},\nabla_{N} {g}\rangle  = \langle\Delta_{N} {f}, {g}\rangle ,\quad\langle  {f},\nabla_{N}\cdot {g}\rangle  = -\langle \nabla_{N} {f}, {g}\rangle .
	\end{displaymath}
\end{myproposition}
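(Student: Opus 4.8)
The plan is to transport every term into Fourier space, where $\mathcal{D}_x$, $\mathcal{D}_y$ and $\Delta_N$ become simple multipliers and all three identities collapse to elementary parity statements about those symbols. First I would record the discrete orthogonality of the grid modes: for any integer $n$ with $|n|<N$ one has $\sum_{p=0}^{N-1}\mathrm{e}^{2\pi\mathrm{i}np/N}=N\delta_{n,0}$, so that $h^2\sum_{p,q=0}^{N-1}\mathrm{e}^{2\pi\mathrm{i}(\ell x_p+m y_q)/L}=L^2\delta_{\ell,0}\delta_{m,0}$ whenever $|\ell|,|m|\le 2K$. Substituting the expansion \eqref{2.1} of $f$ and $g$ into $\langle f,g\rangle=h^2\sum_{p,q}f_{p,q}g_{p,q}$ produces exponentials carrying the combined frequencies $\ell+\ell',\,m+m'\in\{-2K,\dots,2K\}$; the choice $N=2K+1$ keeps these strictly below $N$ in magnitude, so no aliasing collision at $\pm N$ can occur and the orthogonality applies verbatim. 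This yields the discrete Parseval identity
\begin{equation*}
	\langle f,g\rangle = L^2\sum_{\ell,m=-K}^{K}\hat f_{\ell,m}\,\hat g_{-\ell,-m},
\end{equation*}
which is the workhorse for everything that follows.

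Next I would read the three operator symbols directly off \Cref{definition2.2}: $\widehat{(\mathcal{D}_x f)}_{\ell,m}=\tfrac{2\ell\pi\mathrm{i}}{L}\hat f_{\ell,m}$, $\widehat{(\mathcal{D}_y f)}_{\ell,m}=\tfrac{2m\pi\mathrm{i}}{L}\hat f_{\ell,m}$, and $\widehat{(\Delta_N f)}_{\ell,m}=-\tfrac{4\pi^2(\ell^2+m^2)}{L^2}\hat f_{\ell,m}$. The decisive structural feature is the parity of each symbol in $(\ell,m)$: the first-derivative symbols are purely imaginary and \emph{odd}, whereas the Laplacian symbol is real and \emph{even}. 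Since the Parseval pairing couples the mode $(\ell,m)$ of the first factor with the mode $(-\ell,-m)$ of the second, an odd symbol flips sign under this pairing (yielding skew-adjointness of $\mathcal{D}_x,\mathcal{D}_y$) while an even symbol is preserved (yielding self-adjointness of $\Delta_N$).

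With these two ingredients the three claims become one-line computations. For the divergence identity it suffices to establish the scalar skew-adjointness $\langle \phi,\mathcal{D}_x\psi\rangle=-\langle \mathcal{D}_x\phi,\psi\rangle$: applying Parseval to the left side brings out the factor $\tfrac{2(-\ell)\pi\mathrm{i}}{L}$ from the $(-\ell,-m)$ mode of $\mathcal{D}_x\psi$, i.e. the negative of the $\mathcal{D}_x\phi$ multiplier, giving the sign change; summing the componentwise statements then gives $\langle f,\nabla_N\cdot g\rangle=-\langle\nabla_N f,g\rangle$. For the first equality, the even Laplacian symbol is invariant under $(\ell,m)\mapsto(-\ell,-m)$, so both $\langle f,\Delta_N g\rangle$ and $\langle\Delta_N f,g\rangle$ reduce to the identical sum $-4\pi^2\sum_{\ell,m}(\ell^2+m^2)\hat f_{\ell,m}\hat g_{-\ell,-m}$; and expanding $\langle\nabla_N f,\nabla_N g\rangle=\langle\mathcal{D}_x f,\mathcal{D}_x g\rangle+\langle\mathcal{D}_y f,\mathcal{D}_y g\rangle$ and pairing the odd symbols via $\tfrac{2\ell\pi\mathrm{i}}{L}\cdot\tfrac{-2\ell\pi\mathrm{i}}{L}=\tfrac{4\pi^2\ell^2}{L^2}$ collapses it to $+4\pi^2\sum_{\ell,m}(\ell^2+m^2)\hat f_{\ell,m}\hat g_{-\ell,-m}$, exactly $-\langle f,\Delta_N g\rangle$. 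I do not expect a genuine obstacle; the single point demanding care is the orthogonality step, where one must verify that $N=2K+1$ keeps all frequency differences strictly below $N$, so that the discrete Parseval identity holds with no aliasing correction to spoil the clean parity cancellation.
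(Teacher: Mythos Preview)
Your argument is correct: the discrete Parseval identity (valid without aliasing because $N=2K+1$) reduces each summation-by-parts formula to the parity of the Fourier multipliers, exactly as you describe. The paper itself does not give a proof of this proposition but merely cites \cite{chen2012linear, chen2014linear, gottlieb2012long, gottlieb2012stability}; your Fourier-space route is the standard one underlying those references, so there is nothing substantive to compare.
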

\subsection{Time-stepping integrator}
The space-discrete problem of equation \eqref{1.1} turns out to be: find $ {u}:[0,+\infty)\to\mathcal{M}_{N}$ which satisfies
\begin{equation}\label{2.2}
	\frac{\mathrm{d}u}{\mathrm{d}t} = -(\Delta_{N} + I)^2u - f(u). 
\end{equation}
Meanwhile, the discrete energy is given by
\begin{equation*}
	E_{N}(u) = \frac{1}{2}\|(\Delta_{N}+I)u\|_2^2 + \langle\frac{1}{4}u^4-\frac{\varepsilon}{2}u^2,1\rangle.
\end{equation*}
Adding and subtracting the stabilization term $\kappa {u}$ ($\kappa>0$) on the right-hand side (RHS) of equation \eqref{2.2}, we obtain the following ODE system: 
\begin{equation}\nonumber
	\left\{\begin{aligned}
		& \frac{\mathrm{d} {u}}{\mathrm{d}t} = -L_{\kappa} {u} + N_{\kappa}(u),\quad t\in(0,T],\\
		&  {u}(0) = u_{0},\qquad \qquad \quad ~~t=0,
	\end{aligned}\right.
\end{equation}
where $u_0\in\mathcal{M}_{N}$ is given by the initial data, $L_{\kappa} = (\Delta_{N}+I)^2 +\kappa I$, and $N_{\kappa}(u) =  \kappa u - f(u)$. Let $\{\mathrm{e}^{-t L_{\kappa}}\}_{t\ge0}$ denote the semigroup on $\Omega_{N}$ with the generator $(-L_{\kappa})$. Based on the integrating factor, an update of the exact solution from time instant $t_n$ to the next time step $t_{n+1}$ could be expressed as
\begin{equation}\label{duhamel}
	{u}(t_{n+1}) = \mathrm{e}^{-\tau L_{\kappa}} {u}(t_n) + \int_{0}^{\tau}\mathrm{e}^{-(\tau-r)L_{\kappa}}N_{\kappa}( {u}(t_n+r))\,\mathrm{d}r.
\end{equation}
Denoting by $u^n$ the fully discrete numerical solution at the time step $t_n$, a family of second-order ERK approach derived by Hochbruck and Ostermann \cite{hochbruck2005} for solving equation \eqref{duhamel} is formulated as below: for $n=0,1,\ldots,N_t-1$,
\begin{equation}\label{general_erk}
	u_{n,i} = \varphi_0	(\tau L_{\kappa})u^n + \tau\sum_{j=0}^{i-1}a_{i,j}(\tau L_{\kappa})N_{\kappa}(u_{n,j}),\quad i = 1,2,
\end{equation}
in which the coefficients $a_{i,j}(z)$ that satisfy the second-order conditions in Section 5.1 of \cite{hochbruck2005} are linear combinations of the $\varphi_k(z)$ functions defined by
\begin{equation*}
	\varphi_0(z) = \mathrm{e}^{-z},\quad \varphi_{1}(z) = \frac{1-\mathrm{e}^{-z}}{z},\quad\varphi_2(z) = \frac{\mathrm{e}^{-z}-1+z}{z^2},~\text{with}~\varphi_k(0) = \frac{1}{k!}.
\end{equation*}

It is observed that $a_{i,j}(z)$ are specified in the following one-parameter Butcher-like tableau~(cf. equation (5.8) in \cite{hochbruck2005}):
\begin{align}\label{2.5}
	\begin{array}
		{>{\centering\arraybackslash$} p{0.4cm} <{$} | >{\centering\arraybackslash$} p{1.2cm} <{$} >{\centering\arraybackslash$} p{0.8cm} <{$}}
		c_0 & 0 & \\
		c_1 &  a_{1,0}(z) & 0\\
		\hline
		c_2 & a_{2,0}(z) & a_{2,1}(z)	
	\end{array} ~~=~~ 
	\begin{array}
		{>{\centering\arraybackslash$} p{0.4cm} <{$} | >{\centering\arraybackslash$} p{1.8cm} <{$} >{\centering\arraybackslash$} p{0.8cm} <{$}}
		0 & 0 & \\
		c_1 &  c_1\varphi_{1,1} & 0\\
		\hline
		1 & (1-\frac{1}{2 c_1})\varphi_1 & \frac{1}{2 c_1}\varphi_1	
	\end{array},
\end{align}
where $\varphi_{1,1}(z):=\varphi_{1}( c_1z)$. Notice that the choice $ c_1=\frac{1}{2}$ yields $a_{2,0} = 0$. We apply this particular instance to derive a two-stage, second-order fully discrete scheme, denoted as ERK\textit{(2,2)}:
\begin{equation}\label{2.6}
	\begin{aligned}
		{u}^{n+1} &= \varphi_0(\tau L_{\kappa}) {u}^n + \tau\varphi_{1}(\tau L_{\kappa})N_{\kappa}(u_{n,1})\\
		&= P^{-1}\left[\varphi_0(\tau\widehat{L}_{\kappa})Pu^n + \tau\varphi_{1}(\tau \widehat{L}_{\kappa})PN_{\kappa}(u_{n,1})\right],\\
		u_{n,1} &= \varphi_0(\frac{1}{2}\tau L_{\kappa}) {u}^n + \frac{\tau}{2}\varphi_{1}(\frac{1}{2}\tau L_{\kappa})N_{\kappa}( {u}^n)\\
		&=P^{-1}\left[\varphi_0(\frac{1}{2}\tau \widehat{L}_{\kappa})Pu^n + \frac{\tau}{2}\varphi_{1}(\frac{1}{2}\tau\widehat{L}_{\kappa})PN_{\kappa}( {u}^n)\right],
	\end{aligned}
\end{equation}
where the operators $P$ and $P^{-1}$ can be implemented by the 2-D fast Fourier transform (FFT) and the corresponding inverse transform, respectively. Therefore, the overall computational complexity is $\mathcal{O}(N^2\log_2N)$ per time step. In addition, $\widehat{L}_{\kappa} = PL_{\kappa}P^{-1}$.
\begin{myremark}\label{remark2.1}
	Note that another second-order typical ERK scheme, ETDRK2 \eqref{eqn:etdrk2},
	shares the same stage as ERK\textit{(2,2)} but possesses a more complex structure. In a previous work \cite{sun2023family}, the authors conducted a numerical investigation revealing that ETDRK2 achieves better accuracy than the scheme \eqref{2.5} with $c_1=1$. Nonetheless, when compared with ETDRK2, ERK\textit{(2,2)} demonstrates both enhanced accuracy and faster computation (cf. \Cref{fig_err} in Section \ref{sec5}), representing a significant computational advantage.
\end{myremark}
\begin{myremark}
	In fact, there is one more difference between the two methods; that is, ETDRK2 is of \textit{stiff order two}, while ERK\textit{(2,2)} has \textit{stiff convergence order two} \cite{Maset2008}. Existing numerical evidence suggests that the latter method demonstrates superior error accuracy due to its second-order global error, whereas the former exhibits enhanced stability when addressing stiff problems. Remaining this observation in actual numerical implementation remains to be studied further.
\end{myremark}
\section{Proof of Main results}
\label{sec3}
Now we turn to the energy stability analysis of the ERK\textit{(2,2)} scheme. For any $ {u}\in\mathcal{M}_{N}$,  the discrete energy functional could be rewritten as $E_{N}(u) = E_{N,c}(u) + E_{N,e}(u)$, with
\begin{equation}\nonumber
	\nonumber E_{N,c}({u}) = \frac{1}{2}\langle {u}, (\Delta_{N}+I)^2 {u} \rangle , \quad  E_{N,e}( {u})= \langle\frac{1}{4} {u}^4 - \frac{\varepsilon}{2} {u}^2, {1}\rangle.
\end{equation}
\subsection{Proof of \Cref{theorem3.1}}
We first prove that the ERK\textit{(2,2)} scheme \eqref{2.6}, with a sufficiently large $\kappa$, is original energy stable.
\noindent
\begin{proof}
	A difference of the two energy functional $E_{N,c}( {v})$ and $E_{N,c}( {w})$ yields
	\begin{equation}\label{3.43}
		\begin{aligned}
			&E_{N,c}( {v}) - E_{N,c}( {w}) = \frac{1}{2}\langle  {v},(\Delta_{N} + I)^2 {v}\rangle  - \frac{1}{2}\langle  {w},(\Delta_{N} + I)^2 {w}\rangle \\
			&= \langle  {v}- {w},(\Delta_{N}+I)^2 {v}\rangle  - \frac{1}{2}\langle  {v}- {w},(\Delta_{N}+I)^2( {v}- {w})\rangle \\
			&\leq \langle  {v}- {w},L_{\kappa} {v}\rangle  - \kappa\langle {v}- {w}, {v}\rangle .\\
		\end{aligned}
	\end{equation}
	For $E_{N,e}( {v})$ and $E_{N,e}( {w})$, a careful application of Taylor's expansion indicates that 
	\begin{equation}\label{3.44}
		\begin{aligned}
			& E_{N,e}( {v}) - E_{N,e}( {w}) = \langle  {v}- {w}, f( {w})\rangle  + \frac{1}{2}\langle {v}- {w}, f{'}( {\xi})( {v}- {w})\rangle \\
			& = -\langle  {v}- {w}, N_{\kappa}( {w}) \rangle  + \kappa\langle  {v}- {w}, {w}\rangle  + \frac{1}{2}\langle {v}- {w}, f{'}( {\xi})( {v}- {w})\rangle ,
		\end{aligned}
	\end{equation}
	in which the variable $ {\xi}$ is between $ {v}$ and $ {w}$, at a point-wise level. 
	
	\noindent A combination of \eqref{3.43} and \eqref{3.44} leads to
	\begin{equation}\nonumber
		\begin{aligned}
			&E_{N}( {v}) - E_{N}( {w}) \leq\langle  {v}- {w},L_{\kappa} {v} - \kappa {v}\rangle  + \langle  {v}- {w}, -N_{\kappa}( {w})+\kappa {w}\rangle 
			+ \frac{1}{2}\langle {v}- {w}, f{'}( {\xi})( {v}- {w})\rangle \\
			&= \langle  {v}- {w}, L_{\kappa} {v} - N_{\kappa}( {w}) \rangle  + \langle {v}- {w},[\frac{1}{2}f{'}( {\xi})-\kappa I]( {v}- {w})\rangle .
		\end{aligned}
	\end{equation}
	In turn, under the condition that $\kappa\ge\max\limits_{|\xi|\le\beta}\frac{|3\xi^2-\varepsilon|}{2}\ge\frac{1}{2}f'(\xi)$, the following inequality is available:
	\begin{equation*}
		E_{N}( {v}) - E_{N}( {w}) \leq\langle  {v}- {w}, L_{\kappa} {v} - N_{\kappa}( {w}) \rangle  .
	\end{equation*}
	Then we arrive at 
	\begin{equation}\label{3.45}
		\begin{aligned}
			&E_{N}(u_{n,1})- E_{N}( {u}^n) \leq\langle u_{n,1} -  {u}^n, L_{\kappa}u_{n,1} - N_{\kappa}( {u}^n)\rangle \\
			&=\langle u_{n,1} -  {u}^n, L_{\kappa}u_{n,1} - [c_1\tau\varphi_{1}(c_1\tau L_{\kappa})]^{-1}[u_{n,1} - u^n + c_1\tau L_{\kappa}\varphi_{1}(c_1\tau L_{\kappa}) {u}^n]\rangle \\
			&=\langle u_{n,1} -  {u}^n, \{L_{\kappa} - \left[c_1\tau\varphi_{1}\left(c_1\tau L_{\kappa}\right)\right]^{-1}\}(u_{n,1}- {u}^n)\rangle \\
			&=:\langle u_{n,1} -  {u}^n, \Delta_1(u_{n,1}- {u}^n)\rangle ,
		\end{aligned}
	\end{equation}
	where $\Delta_1=2\tau^{-1}h_1(c_1\tau L_{\kappa})$, with $h_1(z) = z - [\varphi_{1}(z)]^{-1}={z\mathrm{e}^{-z}}/{(\mathrm{e}^{-z}-1)}$. Notice that $L_{\kappa}$ is symmetric positive definite, thus $h_1(z) <0$ for any $z\neq0$. Consequently, the operator $\Delta_1$ is negative semi-definite and $E_{N}(u_{n,1}) - E_{N}( {u}^n)\leq 0$. 
	
	\noindent As for the second step, a difference between $E_{N}( {u}^{n+1})$ and $E_{N}(u_{n,1})$ reveals that 
	\begin{equation}\label{3.46}
		\begin{aligned}
			&E_{N}( {u}^{n+1}) - E_{N}(u_{n,1})\leq\langle {u}^{n+1} - u_{n,1}, L_{\kappa} {u}^{n+1} - N_{\kappa}(u_{n,1})\rangle \\
			&=\langle {u}^{n+1} - u_{n,1}, L_{\kappa} {u}^{n+1} -\left[\tau\varphi_{1}(\tau L_{\kappa})\right]^{-1}[{u}^{n+1} - {u}^{n} + \tau L_{\kappa}\varphi_{1}(\tau L_{\kappa}) {u}^n] \rangle \\
			&=\langle {u}^{n+1} - u_{n,1}, \{L_{\kappa} - [\tau\varphi_{1}(\tau L_{\kappa})]^{-1}\}( {u}^{n+1}- {u}^n) \rangle \\
			&=:\langle {u}^{n+1} - u_{n,1}, \Delta_2( {u}^{n+1}- {u}^n) \rangle ,
		\end{aligned}
	\end{equation}
	with $\Delta_2 = \tau^{-1}h_1(\tau L_{\kappa})$ a negative semi-definite operator. In turn, we obtain $E_{N}({u}^{n+1}) - E_{N}(u_{n,1})\leq 0$. Subsequently, a summation of inequalities \eqref{3.45} and \eqref{3.46} yields 
	\begin{equation}\label{3.47}
		\begin{aligned}
			&E_{N}( {u}^{n+1}) - E_{N}( {u}^n)\leq\langle u_{n,1} -  {u}^n, \Delta_1(u_{n,1}- {u}^n)\rangle  + \langle {u}^{n+1} - u_{n,1}, \Delta_2( {u}^{n+1}- {u}^n) \rangle \\
			&=\underbrace{\langle u_{n,1} -  {u}^n, \Delta_1(u_{n,1}- {u}^n)\rangle }_{A} + \underbrace{\langle {u}^{n+1} - u_{n,1}, \Delta_2( {u}^{n+1}-u_{n,1})\rangle }_{B} \\
			&+ \underbrace{\langle  {u}^{n+1} - u_{n,1},\Delta_2(u_{n,1} -  {u}^n) \rangle }_{C},\\
		\end{aligned}
	\end{equation}
	\begin{equation*}
		A = \underbrace{\langle u_{n,1} -  {u}^n, (\Delta_1-\frac{1}{2}\Delta_2)(u_{n,1}- {u}^n)\rangle }_{A_1} + \underbrace{\frac{1}{2}\langle u_{n,1} -  {u}^n, \Delta_2(u_{n,1}- {u}^n)\rangle }_{A_2},
	\end{equation*}
	where $\Delta_1-\frac{1}{2}\Delta_2 = 2\tau^{-1}h_2(\tau L_{\kappa})$, with $h_2(z) = h_1(\frac{z}{2}) - \frac{1}{4}h_1(z) = \frac{z\mathrm{e}^{-z/2}}{2\left(\mathrm{e}^{-z/2}-1\right)} - \frac{z\mathrm{e}^{-z}}{4\left(\mathrm{e}^{-z}-1\right)}$. It can be verified that $h_2\leq 0$ for any $z\neq 0$. Therefore, $\Delta_1-\frac{1}{2}\Delta_2$ is symmetric negative semi-definite and $A_1\leq 0$. We also notice that
	\begin{equation}\nonumber
		\begin{aligned}
			A_2 + \frac{1}{2}C  = \frac{1}{2}\langle {u}^{n+1}- {u}^n, \Delta_2(u_{n,1}- {u}^n)\rangle ,\quad\frac{1}{2}(B+C) = \frac{1}{2}\langle  {u}^{n+1}-u_{n,1},\Delta_2( {u}^{n+1}- {u}^n) \rangle ,
		\end{aligned}
	\end{equation}
	which in turn leads to 
	\begin{equation}\nonumber
		A_2 + \frac{1}{2}B + C = \frac{1}{2}\langle {u}^{n+1}- {u}^n,\Delta_2( {u}^{n+1}- {u}^n)\rangle \leq 0.
	\end{equation}
	As a result, inequality \eqref{3.47} turns out to be
	\begin{equation}\nonumber
		E_{N}( {u}^{n+1}) - E_{N}( {u}^n)\leq A_1 + (A_2 + \frac{1}{2}B + C) + \frac{1}{2}B\leq 0.
	\end{equation}
	This completes the proof.
\end{proof}
\subsection{Proof of \Cref{theorem3.2}}
\Cref{theorem3.1} indicates that, the choice of stabilization parameter in \eqref{classic_kappa} implicitly uses an \textit{a-priori} $\ell^{\infty}$ bound assumption on $u_{n,i}$ in order to make $\kappa$ a controllable constant. Of course, it is desirable to remove this technical restriction and establish a more reasonable energy stability theory.

In this subsection, we perform a direct analysis for the numerical solution of the ERK\textit{(2,2)} scheme \eqref{2.6}, so that uniform-in-time $\ell^2$ and $H_h^2$ estimates become available for the numerical solutions at all the stages. With the help of the discrete Sobolev embedding, we are able to recover the global-in-time values for $\beta$ and $\kappa$ in \eqref{classic_kappa}, which allows us to derive a global-in-time energy stability estimate for the ERK\textit{(2,2)} scheme.
\subsubsection{A few preliminary notations and results}
\noindent

The verification of the following Calculus-style analysis is straightforward. 

\begin{mylemma}\label{lemma3.1}
	For the ERK\textit{(2,2)} scheme \eqref{2.6}, the functions $\varphi_i(c_1z)$ are decreasing and $\varphi_i(c_1z)\leq1$, $\forall z>0$,
	where $i=0,1$.
\end{mylemma}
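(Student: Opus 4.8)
The plan is to exploit the fact that $c_1 = \frac{1}{2} > 0$ is a fixed positive constant, so the substitution $w = c_1 z$ turns both assertions into statements about $\varphi_i(w)$ on $(0,\infty)$: the map $z \mapsto \varphi_i(c_1 z)$ is decreasing precisely when $w \mapsto \varphi_i(w)$ is decreasing, and $\varphi_i(c_1 z) \le 1$ for all $z > 0$ if and only if $\varphi_i(w) \le 1$ for all $w > 0$. I would therefore reduce everything to the monotonicity and the unit bound of $\varphi_0$ and $\varphi_1$ on the positive half-line. For $\varphi_0(w) = \mathrm{e}^{-w}$ both properties are immediate: $\varphi_0'(w) = -\mathrm{e}^{-w} < 0$ gives strict decrease, and $\mathrm{e}^{-w} < 1 = \varphi_0(0)$ for $w > 0$ gives the bound.

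For $\varphi_1(w) = (1 - \mathrm{e}^{-w})/w$ the analysis needs a little more care, since the quotient is not manifestly monotone. First I would differentiate, obtaining $\varphi_1'(w) = w^{-2}\bigl[(w+1)\mathrm{e}^{-w} - 1\bigr]$, so that the sign of $\varphi_1'$ is governed by the auxiliary function $g(w) := (w+1)\mathrm{e}^{-w} - 1$. The key observation is that $g(0) = 0$ and $g'(w) = -w\,\mathrm{e}^{-w} < 0$ for $w > 0$, whence $g$ is strictly decreasing and therefore $g(w) < 0$ on $(0,\infty)$; this yields $\varphi_1'(w) < 0$ and the desired monotonicity. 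For the unit bound I would combine the convention $\varphi_1(0) = 1$ recorded in the definition of the $\varphi_k$ functions with the monotonicity just established to conclude $\varphi_1(w) < 1$ for every $w > 0$.

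The only mildly delicate point, and the place where I expect the main obstacle, is the monotonicity of $\varphi_1$: the quotient $(1-\mathrm{e}^{-w})/w$ does not directly reveal the sign of its derivative, and one must introduce the auxiliary function $g$ and verify $g \le 0$ through its own derivative together with its boundary value at the origin. The removable singularity of $\varphi_1$ at $w = 0$ is handled painlessly by the value $\varphi_1(0) = 1$, so no separate limiting argument is required. Beyond this, the lemma is entirely elementary calculus.
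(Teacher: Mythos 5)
Your argument is correct and is exactly the elementary ``Calculus-style'' verification the paper alludes to but omits: the paper simply declares the lemma straightforward, and your computation of $\varphi_1'(w)=w^{-2}\bigl[(w+1)\mathrm{e}^{-w}-1\bigr]$ together with the auxiliary function $g(w)=(w+1)\mathrm{e}^{-w}-1$ is the standard way to fill that in. No gaps; the reduction via $w=c_1z$ and the treatment of the removable singularity at $w=0$ are both handled properly.
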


We introduce the following linear operators to facilitate the energy stability analysis:
\begin{equation}\nonumber
	G_i = c_i\varphi_{1}(c_i\tau L_{\kappa}) = (\tau L_{\kappa})^{-1}(I-\mathrm{e}^{-c_i\tau L_{\kappa}}),\quad i=1, 2.
\end{equation}
In more details, for ${f}\in\mathcal{M}_{N}$ with a discrete Fourier expansion as~\eqref{2.1}, an application of of $G_i$ becomes
\begin{equation}\nonumber
	\left(G_if\right)_{p,q} = \sum_{\ell,m=-K}^{K}c_i\varphi_{1}(c_i\tau\Lambda_{\ell,m})\hat{f}_{\ell,m}\mathrm{e}^{2\pi\mathrm{i}(\ell x_p+my_q)/L},~\text{with}~\Lambda_{\ell,m} = (-1+\lambda_{\ell,m})^2+\kappa,
\end{equation}
where $\lambda_{\ell,m}=\frac{4\pi^2}{L^2}(\ell^2 + m^2)$. Since all the eigenvalues of $G_i$ are non-negative, we define $G_i^{\frac{1}{2}}$ and $G_{1,2}^{\frac{1}{2}} = G_1^{\frac{1}{2}}G_2^{\frac{1}{2}}$ as 
\begin{equation}\nonumber
	\begin{aligned}
		&(G_i^{\frac{1}{2}}f)_{p,q} = \sum_{\ell,m=-K}^K\left(c_i\varphi_{1}(c_i\tau\Lambda_{\ell,m})\right)^{\frac{1}{2}}\hat{f}_{\ell,m}\mathrm{e}^{2\pi\mathrm{i}(\ell x_p+my_q)/L};\\
		&(G_{1,2}^{\frac{1}{2}}f)_{p,q} = \sum_{\ell,m=-K}^K\left(c_1c_2\varphi_{1}(c_1\tau\Lambda_{\ell,m})\varphi_{1}(c_2\tau\Lambda_{\ell,m})\right)^{\frac{1}{2}}\hat{f}_{\ell,m}\mathrm{e}^{2\pi\mathrm{i}(\ell x_p+my_q)/L}.
	\end{aligned}
\end{equation}
It is obvious that the operators $G_i^{\frac{1}{2}}$ and $G_{1,2}^{\frac{1}{2}}$ are commutative with any differential operators in the Fourier pseudo-spectral space, and the summation by parts formulas are available:
\begin{equation}\nonumber
	\langle {f}, G_i {g}\rangle  = \langle G_i^{\frac{1}{2}} {f}, G_i^{\frac{1}{2}} {g}\rangle,\quad \langle G_1{f}, G_2{g}\rangle  = \langle G_{1,2}^{\frac{1}{2}} {f}, G_{1,2}^{\frac{1}{2}} {g}\rangle.
\end{equation}

In addition, the operators $G_i^{*}$, $G_i^{**}$, $G^{*}$, and $G^{**}$ are introduced to facilitate the analysis for the diffusion part:
\begin{equation}\nonumber
	\begin{aligned}
		&\left(G_i^{*}f\right)_{p,q} = \sum_{\ell,m=-K}^{K}\left(c_i\Lambda_{\ell,m}\varphi_{1}(c_i\tau\Lambda_{\ell,m})\right)^{\frac{1}{2}}\hat{f}_{\ell,m}\mathrm{e}^{2\pi\mathrm{i}(\ell x_p+my_q)/L};\\
		&\left(G_i^{**}f\right)_{p,q} = \sum_{\ell,m=-K}^{K}\left(c_i\Lambda_{\ell,m}\varphi_{1}(c_i\tau\Lambda_{\ell,m})\right)^{\frac{1}{2}}\lambda_{\ell,m}\hat{f}_{\ell,m}\mathrm{e}^{2\pi\mathrm{i}(\ell x_p+my_q)/L};\\
		&\left(G^{*}f\right)_{p,q} = \sum_{\ell,m=-K}^{K}\left(c_1c_2\Lambda_{\ell,m}^2\varphi_{1}(c_1\tau\Lambda_{\ell,m})\varphi_{1}(c_2\tau\Lambda_{\ell,m})\right)^{\frac{1}{2}}\hat{f}_{\ell,m}\mathrm{e}^{2\pi\mathrm{i}(\ell x_p+my_q)/L};\\
		&\left(G^{**}f\right)_{p,q} = \sum_{\ell,m=-K}^{K}\left(c_1c_2\Lambda_{\ell,m}^2\varphi_{1}(c_1\tau\Lambda_{\ell,m})\varphi_{1}(c_2\tau\Lambda_{\ell,m})\right)^{\frac{1}{2}}\lambda_{\ell,m}\hat{f}_{\ell,m}\mathrm{e}^{2\pi\mathrm{i}(\ell x_p+my_q)/L}.
	\end{aligned}
\end{equation}
The following identities could be verified in a straightforward way:
\begin{equation}\nonumber
	\langle G_iL_{\kappa} {f}, (I+\Delta_{N}^2){f}\rangle  = \Vert G_i^* {f}\Vert_{2}^2 + \Vert G_i^{**}f\Vert_{2}^2;~
	\langle G_1L_{\kappa} {f}, G_2(I+\Delta_N^2) {f}\rangle  = \Vert G^{*} {f}\Vert_{2}^2 + \Vert G^{**}f\Vert_{2}^2.
\end{equation}
By the fact that $\varphi_1(c_1z)\leq 1$ (by \Cref{lemma3.1}), it is easy to derive \Cref{proposition3.1}. We also introduce other two propositions whose proofs are postponed to \Cref{secA,secB}.
\begin{myproposition}\label{proposition3.1}
	For any ${f}\in\mathcal{M}_{N}$, the following two estimates are valid: 
	\begin{equation}\label{eqn:gi}
		\Vert G_i^{\frac{1}{2}}f\Vert_{2}\leq\Vert f\Vert_{2},\qquad\Vert G_{1,2}^{\frac{1}{2}}f\Vert_{2}\leq\Vert f\Vert_{2}.
	\end{equation}
\end{myproposition}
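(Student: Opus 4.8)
The plan is to recognize both $G_i^{\frac{1}{2}}$ and $G_{1,2}^{\frac{1}{2}}$ as Fourier multiplier operators and to reduce each estimate to a pointwise bound on the corresponding symbol in frequency space. First I would invoke the discrete Parseval identity attached to the expansion \eqref{2.1}: orthogonality of the discrete Fourier basis on $\Omega_N$ gives $\|f\|_2^2 = L^2\sum_{\ell,m=-K}^{K}|\hat f_{\ell,m}|^2$, and the same identity applied to $G_i^{\frac{1}{2}}f$ and $G_{1,2}^{\frac{1}{2}}f$ merely replaces each $|\hat f_{\ell,m}|^2$ by the squared multiplier times $|\hat f_{\ell,m}|^2$. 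Consequently, $\|G_i^{\frac{1}{2}}f\|_2\le\|f\|_2$ will follow for every $f\in\mathcal{M}_N$ once I show the symbol $c_i\varphi_1(c_i\tau\Lambda_{\ell,m})\le 1$ for all admissible modes $(\ell,m)$, and likewise $\|G_{1,2}^{\frac{1}{2}}f\|_2\le\|f\|_2$ reduces to $c_1c_2\varphi_1(c_1\tau\Lambda_{\ell,m})\varphi_1(c_2\tau\Lambda_{\ell,m})\le 1$.

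It then remains to verify these two scalar inequalities uniformly over the spectrum. Since $\kappa>0$ we have $\Lambda_{\ell,m}=(-1+\lambda_{\ell,m})^2+\kappa\ge\kappa>0$ for every $(\ell,m)$, so each argument $c_i\tau\Lambda_{\ell,m}$ is strictly positive and \Cref{lemma3.1} yields $\varphi_1(c_i\tau\Lambda_{\ell,m})\le 1$. Recalling that the tableau \eqref{2.5} fixes $c_1=\tfrac12$ and $c_2=1$, so that $c_i\le 1$ and $c_1c_2=\tfrac12\le 1$, I would bound the first symbol by $c_i\cdot 1\le 1$ and the second by $c_1c_2\cdot 1\cdot 1\le 1$. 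Feeding these bounds back through Parseval gives both inequalities in \eqref{eqn:gi}.

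I do not expect a genuine obstacle here; the content is essentially a one-line eigenvalue estimate dressed up through the Fourier representation, and the only points needing care are bookkeeping ones. Specifically, I would check that the multipliers are real and non-negative, so that $G_i^{\frac{1}{2}}$ and $G_{1,2}^{\frac{1}{2}}$ are well defined as stated, that $\Lambda_{\ell,m}>0$ keeps the $\varphi_1$ arguments in the regime where \Cref{lemma3.1} applies, and --- most importantly --- that the resulting bound is independent of $\tau$, $\kappa$, $h$ and the total simulation time. It is exactly this uniformity that the later global-in-time argument exploits. The extra prefactors $c_i$ (rather than $\varphi_1$ alone) are what would give strictness away from the $\tau\Lambda_{\ell,m}\to 0$ limit, but for the stated non-strict estimate the crude bound $\varphi_1\le 1$ combined with $c_i\le 1$ already suffices.
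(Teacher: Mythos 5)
Your argument is correct and is essentially the paper's own: the paper simply remarks that \Cref{proposition3.1} follows from the bound $\varphi_1(c_iz)\le 1$ of \Cref{lemma3.1}, which is exactly the symbol estimate you carry out via Parseval, with the prefactors $c_1=\tfrac12$, $c_2=1$ absorbed since $c_i\le 1$. The only difference is that you spell out the Fourier-multiplier bookkeeping that the paper leaves implicit.
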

\begin{myproposition}\label{proposition3.2}
	For $\kappa\ge1$ and $f\in\mathcal{M}_{N}$, we have
	\begin{equation}\label{eqn:proposition3.2(1)}
		\begin{aligned}
			& \Vert G_i^{*} {f}\Vert_{2}^2 + \Vert G_i^{**} {f}\Vert_{2}^2\ge\frac{1}{4}(\Vert G_i^{\frac{1}{2}}\Delta_{N}{f}
			\Vert_{2}^2+\Vert G_i^{\frac{1}{2}}\Delta_{N}^2{f}
			\Vert_{2}^2) \\
			& + (\kappa-1)(\Vert G_i^{\frac{1}{2}}{f}
			\Vert_{2}^2+\Vert G_i^{\frac{1}{2}}\Delta_{N}{f}
			\Vert_{2}^2) + \frac{2}{3}(\Vert G_i^{\frac{1}{2}}{f}
			\Vert_{2}^2+\Vert G_i^{\frac{1}{2}}\Delta_N{f}
			\Vert_{2}^2).
		\end{aligned}
	\end{equation}
	\begin{equation}\label{eqn:proposition3.2(2)}
		\begin{aligned}
			&\Vert G^{*} {f}\Vert_{2}^2 + \Vert G^{**} {f}\Vert_{2}^2\ge\frac{1}{4}(\Vert G_{1,2}^{\frac{1}{2}}\Delta_{N}{f}
			\Vert_{2}^2 + \Vert G_{1,2}^{\frac{1}{2}}\Delta_{N}^2{f}
			\Vert_{2}^2 ) \\
			& + (\kappa-1)(\Vert G_{1,2}^{\frac{1}{2}}{f}
			\Vert_{2}^2 + \Vert G_{1,2}^{\frac{1}{2}}\Delta_{N}{f}
			\Vert_{2}^2) + \frac{2}{3}(\|G_{1,2}^{\frac{1}{2}}f\|_2^2 + \|G_{1,2}^{\frac{1}{2}}\Delta_Nf\|_2^2).
		\end{aligned}
	\end{equation}
\end{myproposition}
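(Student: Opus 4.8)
The plan is to diagonalize every operator appearing in \eqref{eqn:proposition3.2(1)} and \eqref{eqn:proposition3.2(2)} in the discrete Fourier basis and thereby reduce each operator inequality to a scalar inequality that must hold for each Fourier mode separately. Since $G_i^{*}$, $G_i^{**}$, $G^{*}$, $G^{**}$, $G_i^{\frac12}$, $G_{1,2}^{\frac12}$, $\Delta_N$ and $\Delta_N^2$ are all Fourier multipliers, Parseval's identity turns each squared $\ell^2$-norm into a sum over $(\ell,m)$ of a non-negative symbol times $|\hat f_{\ell,m}|^2$ (with a common normalization factor that cancels). Because every term on both sides is a mode-wise sum with non-negative coefficients, it suffices to prove the corresponding inequality for each fixed mode and then sum over $(\ell,m)$. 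Throughout I write $\lambda=\lambda_{\ell,m}$ and $a=\Lambda_{\ell,m}=(\lambda-1)^2+\kappa$ for the mode under consideration.

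For \eqref{eqn:proposition3.2(1)}, set $w=c_i\varphi_{1}(c_i\tau a)>0$, the symbol of $G_i$. The symbols of $\|G_i^{*}f\|_2^2$ and $\|G_i^{**}f\|_2^2$ are $wa$ and $wa\lambda^2$, so the left-hand side contributes the mode-wise factor $wa(1+\lambda^2)$; likewise $\Delta_N$ and $\Delta_N^2$ contribute factors $\lambda^2$ and $\lambda^4$ to the symbol $w$, so the right-hand side collapses to $w(1+\lambda^2)\big[\tfrac14\lambda^2+\kappa-\tfrac13\big]$. Dividing through by the strictly positive common factor $w(1+\lambda^2)$, the inequality reduces to
\begin{equation*}
	a=(\lambda-1)^2+\kappa\ \ge\ \tfrac14\lambda^2+\kappa-\tfrac13,
\end{equation*}
which upon rearrangement is exactly $\tfrac34\lambda^2-2\lambda+\tfrac43=\tfrac34\big(\lambda-\tfrac43\big)^2\ge0$, a perfect square. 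This closes \eqref{eqn:proposition3.2(1)}.

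For \eqref{eqn:proposition3.2(2)}, I would repeat the same reduction with $v=c_1c_2\varphi_{1}(c_1\tau a)\varphi_{1}(c_2\tau a)>0$, the squared symbol of $G_{1,2}^{\frac12}$. The left-hand side now carries the symbol $va^2(1+\lambda^2)$, while the right-hand side again factors as $v(1+\lambda^2)\big[\tfrac14\lambda^2+\kappa-\tfrac13\big]$, so after cancelling $v(1+\lambda^2)$ the target becomes $a^2\ge\tfrac14\lambda^2+\kappa-\tfrac13$. Here I would invoke the standing hypothesis $\kappa\ge1$: it forces $a=(\lambda-1)^2+\kappa\ge1$, hence $a^2\ge a$, and combining with the linear bound $a\ge\tfrac14\lambda^2+\kappa-\tfrac13$ just established for \eqref{eqn:proposition3.2(1)} yields $a^2\ge a\ge\tfrac14\lambda^2+\kappa-\tfrac13$.

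The computations are elementary; the only genuinely delicate point is organizational, namely keeping track of which multiplier each operator contributes so that the common factors $w(1+\lambda^2)$ and $v(1+\lambda^2)$ can be extracted cleanly, and noticing that the residual quadratic in $\lambda$ is a perfect square rather than merely non-negative by a discriminant estimate. The role of the assumption $\kappa\ge1$ is confined to the second estimate, where it upgrades the linear bound on $a$ to the quadratic bound on $a^2$ via $a^2\ge a$.
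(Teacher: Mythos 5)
Your proof is correct and follows essentially the same route as the paper's: both reduce the operator inequalities mode-by-mode via Parseval and rest on the completed-square identity $\Lambda_{\ell,m}=(\lambda_{\ell,m}-1)^2+\kappa=\tfrac{2}{3}+\bigl(\tfrac{2}{\sqrt{3}}-\tfrac{\sqrt{3}}{2}\lambda_{\ell,m}\bigr)^2+\tfrac{1}{4}\lambda_{\ell,m}^2+(\kappa-1)\ge\tfrac14\lambda_{\ell,m}^2+\kappa-\tfrac13$. Your treatment of \eqref{eqn:proposition3.2(2)} via $\Lambda_{\ell,m}^2\ge\Lambda_{\ell,m}$ (using $\kappa\ge1$) makes explicit a step the paper only gestures at, but it is the same argument.
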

\begin{myproposition}\label{proposition3.3}
	For $f,g\in\mathcal{M}_{N}$, we have 
	\begin{align}
		&\tau\langle G_iL_{\kappa} {f},\varphi_0(c_i\tau L_{\kappa})f\rangle  + \Vert {g}-\varphi_0(c_i\tau L_{\kappa})f\Vert_{2}^2\geq\tau\Vert G_i^{*} {g}\Vert_{2}^2;\label{eqn:proposition3.3(1)}\\
		&\tau\langle G_iL_{\kappa} {f},\Delta_{N}^2\varphi_0(c_i\tau L_{\kappa}) {f}\rangle  + \Vert\Delta_{N}( {g}-\varphi_0(c_i\tau L_{\kappa}){f})\Vert_{2}^2\geq\tau\Vert G_i^{**} {g}\Vert_{2}^2;\label{eqn:proposition3.3(2)}\\
		&\tau\langle G_1L_{\kappa} {f},G_2\varphi_0(c_i\tau L_{\kappa}) {f}\rangle  + \Vert G_2^{\frac{1}{2}}(g-\varphi_0(c_i\tau L_{\kappa})f)\Vert_{2}^2\geq\tau\Vert G^{*} {g}\Vert_{2}^2;\label{eqn:proposition3.3(3)}\\
		&\tau\langle G_1L_{\kappa} {f},G_2\Delta_{N}^2\varphi_0(c_i\tau L_{\kappa}) {f}\rangle  + \Vert G_2^{\frac{1}{2}}\Delta_{N}( {g}-\varphi_0(c_i\tau L_{\kappa}){f})\Vert_{2}^2\geq\tau\Vert G^{**} {g}\Vert_{2}^2 . \label{eqn:proposition3.3(4)} 
	\end{align}
\end{myproposition}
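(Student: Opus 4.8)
The plan is to exploit the fact that every operator appearing in Proposition~\ref{proposition3.3} --- $L_\kappa$, $\varphi_0(c_i\tau L_\kappa)$, $G_i$, $\Delta_N$, together with $G_i^{1/2}$, $G_i^{*}$, $G_i^{**}$, $G^{*}$, $G^{**}$ --- is a Fourier multiplier acting diagonally on the common eigenbasis $\{\mathrm{e}^{2\pi\mathrm{i}(\ell x_p + m y_q)/L}\}$. Hence, by Parseval's identity, each inner product and each squared norm in the four inequalities splits into a sum over the modes $(\ell,m)$, and the whole proposition reduces to a one-parameter family of \emph{scalar} inequalities, one per mode, expressed through the eigenvalues $\Lambda_{\ell,m}$ of $L_\kappa$, the eigenvalues $\lambda_{\ell,m}$ of $-\Delta_N$, and the Fourier coefficients $x:=\hat f_{\ell,m}$, $y:=\hat g_{\ell,m}$. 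The entire difficulty is thereby transferred to verifying these elementary scalar inequalities, and no time-step or $\kappa$ restriction should be needed because each mode automatically satisfies $c_i\tau\Lambda_{\ell,m}>0$, consistent with the hypothesis-free statement.

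First I would treat the first two (single-stage) estimates. Writing $a:=\mathrm{e}^{-c_i\tau\Lambda_{\ell,m}}\in(0,1)$, the symbols of $\tau G_iL_\kappa\varphi_0(c_i\tau L_\kappa)$ and of $\tau(G_i^{*})^2$ are, respectively, $(1-a)a$ and $(1-a)$, while $\varphi_0(c_i\tau L_\kappa)f$ carries the symbol $a$. Substituting these into the first inequality and expanding the cross term $|y-ax|^2$, all $|y|^2$ and $a^2|x|^2$ contributions cancel and the scalar inequality collapses to $a\,|x-y|^2\ge0$, which is immediate. The second inequality is identical up to inserting the multiplier $\Delta_N^2$ (symbol $\lambda_{\ell,m}^2\ge0$) throughout, and hence reduces to $\lambda_{\ell,m}^2\,a\,|x-y|^2\ge0$.

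Next I would turn to the cross-stage estimates, which are the crux. Now two distinct exponentials $a_1:=\mathrm{e}^{-c_1\tau\Lambda_{\ell,m}}$ and $a_2:=\mathrm{e}^{-c_2\tau\Lambda_{\ell,m}}$ enter through $G_1L_\kappa$, $G_2$ and $G^{*}$. My approach is to factor the common weight $\tfrac{1-a_2}{\tau\Lambda_{\ell,m}}$ carried by $G_2$ out of all three terms; after this cancellation the per-mode form of the third inequality acquires the same shape as the single-stage one, namely $(1-a_1)a_1|x|^2 + |y-a_1x|^2 \ge (1-a_1)|y|^2$, which again completes the square to $a_1|x-y|^2\ge0$, and the fourth inequality follows by the same reduction weighted by $\lambda_{\ell,m}^2$. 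Two points require care here: the exponential $\varphi_0$ must be paired with the matching stage index so that the quadratic in $x$ completes exactly, and Lemma~\ref{lemma3.1} should be invoked to guarantee the nonnegativity of the $\varphi_1(c_i\tau\Lambda_{\ell,m})$ weights used along the way.

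I expect the genuine obstacle to be bookkeeping rather than analysis: one must track the precise powers of $\Lambda_{\ell,m}$ in the symbols of $G^{*}$ and $G^{**}$ and check that they recombine with the factor $L_\kappa$ sitting inside $G_1L_\kappa$ exactly as in the operator identities stated just before the propositions. I would therefore verify those symbol identities first, so that the right-hand sides $\tau\|G^{*}g\|_2^2$ and $\tau\|G^{**}g\|_2^2$ are correctly matched to the mixed $G_1$--$G_2$ weight, and only then run the completion-of-square argument. Provided the powers of $\Lambda_{\ell,m}$ are handled consistently, the cross terms telescope precisely as in the single-stage case, and the proposition follows mode by mode.
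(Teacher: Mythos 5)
Your proposal is correct and follows essentially the same route as the paper's Appendix~\ref{secB}: diagonalize all operators in the Fourier basis, use Parseval to reduce each estimate to a scalar inequality per mode $(\ell,m)$, and verify that inequality by a quadratic argument (your exact completion of the square to $a\vert x-y\vert^2\ge 0$ is an algebraically cleaner equivalent of the paper's weighted Cauchy-plus-triangle-inequality step, and your caveats about matching the stage index in $\varphi_0$ and checking the powers of $\Lambda_{\ell,m}$ in the symbols of $G^{*}$, $G^{**}$ are exactly the right bookkeeping to insist on).
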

\begin{mylemma}\label{lemma3.2}
	For $f\in\mathcal{M}_{N}$, 
	it follows that
	\begin{displaymath}
		\Vert {f}\Vert_{\infty}\leq\hat{C}\left(\Vert {f}\Vert_{2} + \Vert \Delta_{N} {f}\Vert_{2}\right),
	\end{displaymath}
	where the constant $\hat{C}$ is independent of $ {f}$, $h$, and $\kappa$.
\end{mylemma}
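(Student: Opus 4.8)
The plan is to reduce this purely discrete statement to the continuous Sobolev embedding $H^2 \hookrightarrow L^\infty$ in two space dimensions, exploiting the fact that a grid function and its trigonometric interpolant carry the same Fourier coefficients and therefore the same $\ell^2$- and $L^2$-type norms. The critical point will be a convergent lattice sum that happens to be finite precisely because the spatial dimension is two.

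First I would pass from the grid function $f$ to its continuous extension $f_S = S_N(f) \in \mathcal{B}^K$ from \Cref{definition2.1}. Since $f_S$ interpolates $f$ at the grid points, one has $\Vert f\Vert_\infty = \max_{p,q}|f_{p,q}| = \max_{p,q}|f_S(x_p,y_q)| \le \Vert f_S\Vert_{L^\infty(\Omega)}$, so it suffices to control the continuous sup-norm of $f_S$. Writing $f_S$ through its finite Fourier series \eqref{2.1} and applying the triangle inequality yields the pointwise bound $\Vert f_S\Vert_{L^\infty(\Omega)} \le \sum_{\ell,m=-K}^{K}|\hat{f}_{\ell,m}|$.

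Next I would insert the weight $(1+\lambda_{\ell,m})$, with $\lambda_{\ell,m} = \tfrac{4\pi^2}{L^2}(\ell^2+m^2)$, and apply the Cauchy--Schwarz inequality:
\[
\sum_{\ell,m}|\hat{f}_{\ell,m}| \le \Bigl(\sum_{\ell,m}\tfrac{1}{(1+\lambda_{\ell,m})^2}\Bigr)^{1/2}\Bigl(\sum_{\ell,m}(1+\lambda_{\ell,m})^2|\hat{f}_{\ell,m}|^2\Bigr)^{1/2}.
\]
The first factor is the decisive one: since the summand decays like $(\ell^2+m^2)^{-2}$, the full series $\sum_{\ell,m\in\mathbb{Z}}(1+\lambda_{\ell,m})^{-2}$ converges in dimension two, so every partial sum over $|\ell|,|m|\le K$ is dominated by a finite constant $C_\Omega$ depending only on $L$ and independent of $K$ (hence of $h$). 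For the second factor I would use the elementary estimate $(1+\lambda)^2 \le 2(1+\lambda^2)$ together with discrete Parseval, which identifies $\sum_{\ell,m}|\hat{f}_{\ell,m}|^2 = L^{-2}\Vert f\Vert_2^2$ and $\sum_{\ell,m}\lambda_{\ell,m}^2|\hat{f}_{\ell,m}|^2 = L^{-2}\Vert\Delta_N f\Vert_2^2$, the latter because $\Delta_N$ acts as multiplication by $-\lambda_{\ell,m}$ in Fourier space by \Cref{definition2.2}.

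Combining the two factors gives $\Vert f\Vert_\infty \le \hat{C}\bigl(\Vert f\Vert_2^2 + \Vert\Delta_N f\Vert_2^2\bigr)^{1/2} \le \hat{C}\bigl(\Vert f\Vert_2 + \Vert\Delta_N f\Vert_2\bigr)$ with $\hat{C} = (2C_\Omega/L^2)^{1/2}$, a constant determined solely by $\Omega$; in particular it is independent of $f$, $h$, and $\kappa$ (the latter not even appearing in the statement), as required. The main obstacle is the uniform-in-$K$ bound on $\sum(1+\lambda_{\ell,m})^{-2}$: this convergence is exactly where the critical two-dimensional Sobolev threshold for $H^2 \hookrightarrow L^\infty$ enters, and it is what prevents $\hat{C}$ from degenerating as the mesh is refined.
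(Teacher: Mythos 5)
Your proof is correct, and it follows the same first step as the paper --- passing from the grid function $f$ to its trigonometric interpolant $f_S=S_N(f)$ and using $\Vert f\Vert_\infty\le\Vert f_S\Vert_{L^\infty}$ --- but it then diverges in how the embedding itself is handled. The paper simply invokes the continuous 2-D Sobolev inequality for $H^2\hookrightarrow L^\infty$ together with elliptic regularity to get $\Vert f_S\Vert_{L^\infty}\le\hat{C}(\Vert f_S\Vert_{L^2}+\Vert\Delta f_S\Vert_{L^2})$, and then identifies the continuous norms with the discrete ones via Parseval; the whole proof is three lines. You instead prove that embedding from scratch for trigonometric polynomials: triangle inequality on the Fourier series, Cauchy--Schwarz against the weight $(1+\lambda_{\ell,m})$, convergence of the lattice sum $\sum(1+\lambda_{\ell,m})^{-2}$ in dimension two, and discrete Parseval to convert the weighted coefficient sum into $\Vert f\Vert_2^2+\Vert\Delta_N f\Vert_2^2$. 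Your route is more self-contained and makes two things explicit that the paper leaves implicit: the value of the constant $\hat{C}=(2C_\Omega/L^2)^{1/2}$ and the precise reason the estimate is uniform in $K$ (hence in $h$), namely that the full lattice series converges so every partial sum is dominated by a $K$-independent constant. The elliptic-regularity step the paper cites (controlling the full $H^2$ seminorm by $\Vert\Delta f_S\Vert_{L^2}$ for periodic functions) is absorbed automatically by your choice of weight. The paper's version buys brevity at the cost of citing two standard continuous results; yours costs a few more lines but is verifiable by direct computation and would transfer verbatim to any reader wanting a quantitative constant.
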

\begin{proof}
	For any periodic function $f\in\mathcal{M}_{N}$, we recall its continuous extension, $f_S = S_N(f)\in\mathcal{B}^K$, as introduced in \Cref{definition2.1}. Since $f$ is the point-wise interpolation of $f_S$, we see that $\|f\|_{\infty}\le\|f_S\|_{L^{\infty}}$. For any smooth function $f_S$, applying the 2-D Sobolev inequality associated the embedding $H^2\hookrightarrow L^{\infty}$ and the elliptic regularity, it holds that
	\begin{equation*}
		\|f_S\|_{L^{\infty}}\le\hat{C}(\|f_S\|_{L^2} + \|\Delta f_S\|_{L^2}) = \hat{C}(\|f\|_{2} + \|\Delta_Nf\|_{2}) .
	\end{equation*}
\end{proof}

To proceed the energy analysis, we make an \textit{a-priori} assumption at the previous time step: 
\begin{equation}\label{eqn:ass_energy}
	E_{N}( {u}^n)\leq E_{N}( {u}^0)=:C_e.
\end{equation}
Such an assumption will be recovered at the next time step. Afterwards, the $\ell^2$  and $H_h^2$ bounds for the numerical solutions $u^n$ could be derived (cf. Lemma 3.7 of \cite{wise2009energy}),
\begin{equation}\label{eqn:wise2009}
	\Vert {u}^n\Vert_{2},~\Vert\Delta_{N} {u}^n\Vert_{2}\leq C_0:= 2\sqrt{C_e+|\Omega|}.
\end{equation}
Based on an application of \Cref{lemma3.2}, we have a discrete $\ell^{\infty}$  bound at time step $t^n$:
\begin{equation}\label{eqn:infinity_un}
	\|u^n\|_{\infty}\leq\hat{C}(\|u^n\|_2 + \|\Delta_{N}u^n\|_2)\leq 2\hat{C}C_0=:\tilde{C}_0.
\end{equation}
\subsubsection{Preliminary $\ell^2$ and $H_h^2$ estimates of $u_{n,1}$}
To obtain a rough $\ell^{\infty}$ estimate for the intermediate-stage numerical solution, we have to derive the $\ell^2$ and $H_h^2$ estimates for $u_{n,1}$, given by \eqref{2.6}. Meanwhile, an intuitional interaction between the linear and nonlinear terms is not clearly presented in the current numerical formulation \eqref{2.6}. To remedy this issue, by introducing $ u_{n,i}^*=\varphi_0\left(c_i\tau L_{\kappa}\right) {u}^n$, the algorithm can be recast as a two-substage system, facilitating a more convenient theoretical analysis: 
\begin{align}
	\frac{{u}_{n,i}^{*}- {u}^n}{\tau}&= -c_iL_{\kappa}\varphi_{1}(c_i\tau L_{\kappa}) {u}^n,\label{eqn:substage_1}\\
	\frac{u_{n,i} - {u}_{n,i}^{*}}{\tau}& = c_i\varphi_{1}(c_i\tau L_{\kappa})N_{\kappa}(u_{n,j}),\quad i=1,2,~j=i-1.\label{eqn:substage_2}
\end{align}
Taking a discrete $\ell^2$ inner product with equality \eqref{eqn:substage_1} by $(I + \Delta_{N}^2)(u_{n,i}^{*}+ u^n)$, combined with the summation by parts formula, yields
\begin{equation}\label{3.2.2(1)}
	(I+\Delta_{N}^2)(\Vert {u}_{n,i}^{*}\Vert_{2}^2 - \Vert {u}^n\Vert_{2}^2) + \tau(\Vert G_i^{*} {u}^n\Vert_{2}^2 + \Vert G_i^{**} {u}^n\Vert_{2}^2 + \langle G_iL_{\kappa} {u}^n, (I+\Delta_{N}^2){u}_{n,i}^{*}\rangle)  =  0.
\end{equation}
Taking a discrete $\ell^2$ inner product with equality \eqref{eqn:substage_2} by $2(I+\Delta_{N}^2)u_{n,i}$ leads to
\begin{equation}\label{3.2.2(2)}
	\langle u_{n,i} - {u}_{n,i}^{*},2(I+\Delta_{N}^2)u_{n,i}\rangle  = 2\tau\langle G_iN_{\kappa}(u_{n,j}),(I+\Delta_{N}^2)u_{n,i}\rangle .
\end{equation}
The term on the left-hand side (LHS) of equality \eqref{3.2.2(2)} can be rewritten as: 
\begin{equation}\label{3.2.2(3)}
	\begin{aligned}
		&\langle u_{n,i} - {u}_{n,i}^{*},2(I+\Delta_{N}^2)u_{n,i}\rangle = \Vert u_{n,i}\Vert_{2}^2 - \Vert u_{n,i}^{*} \Vert_{2}^2 + \Vert u_{n,i} - {u}_{n,i}^{*}\Vert_{2}^2 + \Vert \Delta_{N}u_{n,i}\Vert_{2}^2\\
		& - \Vert\Delta_{N}u_{n,i}^{*} \Vert_{2}^2 + \Vert\Delta_{N}(u_{n,i} - {u}_{n,i}^{*})\Vert_{2}^2 ,
	\end{aligned}
\end{equation}
where the identity
$a^2 - b^2 = 2a(a-b) - (a-b)^2$ has been employed. In turn, a combination of equalities \eqref{3.2.2(1)}-\eqref{3.2.2(3)} leads to 
\begin{equation}\label{3.2.2(4)}
	\begin{aligned}
		&\Vert u_{n,i}\Vert_{2}^2 - \Vert u^{n} \Vert_{2}^2 + \Vert u_{n,i} - {u}_{n,i}^{*}\Vert_{2}^2 + \Vert \Delta_{N}u_{n,i}\Vert_{2}^2 - \Vert\Delta_{N}u_{n,i}^{*} \Vert_{2}^2 + \Vert\Delta_{N}(u_{n,i} - {u}_{n,i}^{*})\Vert_{2}^2 \\
		& + \tau(\Vert G_i^{*} {u}^n\Vert_{2}^2  + \Vert G_i^{**} {u}^n\Vert_{2}^2 + \langle G_iL_{\kappa} {u}^n, (I+\Delta_{N}^2){u}_{n,i}^{*}\rangle) = 2\tau\langle G_iN_{\kappa}(u_{n,j}),(I+\Delta_{N}^2)u_{n,i}\rangle.
	\end{aligned}
\end{equation}
Meanwhile, by inequalities \eqref{eqn:proposition3.3(1)}-\eqref{eqn:proposition3.3(2)} in \Cref{proposition3.3}, we see that
\begin{equation}\nonumber
	\begin{aligned}
		&\Vert u_{n,i} - u_{n,i}^{*} \Vert_{2}^2 +\Vert\Delta_{N}(u_{n,i} - u_{n,i}^{*})\Vert_{2}^2 + \tau\langle G_iL_{\kappa} {u}^n, (I+\Delta_{N}^2)u_{n,i}^{*}\rangle\\
		&\geq\tau(\Vert G_i^{*}u_{n,i}\Vert_{2}^2
		+\Vert G_i^{**}u_{n,i}\Vert_{2}^2).
	\end{aligned}
\end{equation}
Going back to equality \eqref{3.2.2(4)}, we arrive at
\begin{equation}\label{3.2.2(5)}
	\begin{aligned}
		&\Vert u_{n,i}\Vert_{2}^2 - \Vert u^n\Vert_{2}^2 + \Vert\Delta_{N}u_{n,i}\Vert_{2}^2 - \Vert\Delta_{N}u^n\Vert_{2}^2+ \tau(\Vert G_i^{*}u_{n,i}\Vert_{2}^2 + \Vert G_i^{*} {u}^n\Vert_{2}^2 \\
		&+ \Vert G_i^{**}u_{n,i}\Vert_{2}^2 + \Vert G_i^{**} {u}^n\Vert_{2}^2)
		\leq 2\tau\langle G_iN_{\kappa}(u_{n,j}),(I+\Delta_{N}^2)u_{n,i}\rangle.
	\end{aligned}
\end{equation}
The RHS of inequality \eqref{3.2.2(5)} contains two parts: 
\begin{equation}\label{3.2.2(6)}
	\begin{aligned}
		2\langle G_iN_{\kappa}(u_{n,j}),(I+\Delta_{N}^2)u_{n,i} \rangle  = \langle -2G_iu^3_{n,j} + 2(\kappa+\varepsilon)G_i u_{n,j}, (I+\Delta_{N}^2)u_{n,i} \rangle.
	\end{aligned}
\end{equation}
For $i=1$, the first term could be analyzed as follows:
\begin{equation}\label{3.2.2(7)}
	\begin{aligned}
		&-2\langle G_1(u^n)^3,(I+\Delta_{N}^2)u_{n,1} \rangle  = -2\langle G_1^{\frac{1}{2}}(u^n)^3,G_1^{\frac{1}{2}}(I+\Delta_{N}^2)u_{n,1} \rangle\\
		&  \leq 2\Vert({u}^n)^3\Vert_{2}\cdot\Vert G_1^{\frac{1}{2}}(I+\Delta_{N}^2)u_{n,1}\Vert_{2}\leq16\Vert (u^n)^3\Vert_{2}^2 + \frac{1}{8}(\Vert G_1^{\frac{1}{2}}u_{n,1}\Vert_{2}^2 + \Vert G_1^{\frac{1}{2}}\Delta_{N}^2u_{n,1}\Vert_{2}^2),
	\end{aligned}
\end{equation}
in which the inequality $(a+b)^2\leq2(a^2+b^2)$, summations by parts formula, discrete Cauchy--Schwartz inequality, as well as \Cref{proposition3.1}, have been applied in the analysis. The second term of equality \eqref{3.2.2(6)} could be decomposed into two parts:	
\begin{equation}\label{3.2.2(8)}
	\begin{aligned}
		&2(\kappa-1)\langle G_1u^n,(I+\Delta_{N}^2)u_{n,1} \rangle\leq2(\kappa-1)[\Vert G_1^{\frac{1}{2}}{u}^n\Vert_{2}\cdot(\Vert G_1^{\frac{1}{2}}\Delta_{N}^2u_{n,1}\Vert_{2} +\Vert G_1^{\frac{1}{2}}u_{n,1}\Vert_{2})]\\
		& \leq(\kappa-1)(\Vert G_1^{\frac{1}{2}}{u}^n\Vert_{2}^2 + \Vert G_1^{\frac{1}{2}}u_{n,1}\Vert_{2}^2 + \Vert G_1^{\frac{1}{2}}\Delta_{N} {u}^n\Vert_{2}^2 + \Vert G_1^{\frac{1}{2}}\Delta_{N}u_{n,1}\Vert_{2}^2);
	\end{aligned}
\end{equation}
\begin{equation}\label{3.2.2(9)}
	\begin{aligned}
		&2(1+\varepsilon)\langle G_1u^n,(I+\Delta_{N}^2)u_{n,1} \rangle\leq64\Vert {u}^n\Vert_{2}^2 + \frac{1}{16}\Vert G_1^{\frac{1}{2}}(I+\Delta_{N}^2)u_{n,1}\Vert_{2}^2\\
		&\leq64\Vert {u}^n\Vert_{2}^2 + \frac{1}{8}(\Vert G_1^{\frac{1}{2}}u_{n,1}\Vert_{2}^2 + \Vert G_1^{\frac{1}{2}}\Delta_{N}^2u_{n,1}\Vert_{2}^2).
	\end{aligned}
\end{equation}
Meanwhile, we observe the following inequality, as given by inequalitiy \eqref{eqn:proposition3.2(1)} in \Cref{proposition3.2}: 
\begin{equation}\label{3.2.2(10)}
	\begin{aligned}
		&\Vert G_1^{*}u_{n,1}\Vert_{2}^2 + \Vert G_1^{*}u^n\Vert_{2}^2 +\Vert G_1^{**}u_{n,1} \Vert_{2}^2 + \Vert G_1^{**} {u}^n \Vert_{2}^2\geq \frac{2}{3}(\|G_1^{\frac{1}{2}}u_{n,1}\|_2^2 + \|G_1^{\frac{1}{2}}u^{n}\|_2^2)\\
		& + 
		\frac{1}{4}(\Vert G_1^{\frac{1}{2}}\Delta_{N}u_{n,1}\Vert_{2}^2 +\Vert G_1^{\frac{1}{2}}\Delta_{N}u^n\Vert_{2}^2
		+\Vert G_1^{\frac{1}{2}}\Delta_{N}^2u_{n,1}\Vert_{2}^2 + \Vert G_1^{\frac{1}{2}}\Delta_{N}^2 {u}^n\Vert_{2}^2)\\
		&+ (\kappa-1)(\Vert G_1^{\frac{1}{2}}u_{n,1}\Vert_{2}^2 + \Vert G_1^{\frac{1}{2}}u^n\Vert_{2}^2 + \Vert G_1^{\frac{1}{2}}\Delta_{N}u_{n,1}\Vert_{2}^2 + \Vert G_1^{\frac{1}{2}}\Delta_{N} {u}^n\Vert_{2}^2).
	\end{aligned}
\end{equation}
Subsequently, a substitution of inequalities \eqref{3.2.2(7)}-\eqref{3.2.2(10)} into \eqref{3.2.2(5)} leads to
\begin{equation}\nonumber
	\begin{aligned}
		&\Vert u_{n,1}\Vert_{2}^2 - \Vert {u}^n\Vert_{2}^2 + \Vert\Delta_{N}u_{n,1}\Vert_{2}^2 - \Vert\Delta_{N}{u}^n\Vert_{2}^2 + \frac{\tau}{4}(\Vert G_1^{\frac{1}{2}}\Delta_{N}u_{n,1}\Vert_{2}^2 + \Vert G_1^{\frac{1}{2}}\Delta_{N} {u}^{n}\Vert_{2}^2 \\&+ \Vert G_1^{\frac{1}{2}}\Delta_{N}^2u^n\Vert_2^2)
		+\frac{5}{12}\tau\|G_1^{\frac{1}{2}}u_{n,1}\|_2^2 + \frac{2}{3}\tau\|G_1^{\frac{1}{2}}u^n\|_2^2 \leq64\tau\Vert {u}^n\Vert_{2}^2 + 16\tau\Vert( {u}^n)^3\Vert_{2}^2,
	\end{aligned}
\end{equation}
or in an equivalent manner,
\begin{equation}\nonumber
	\Vert u_{n,1}\Vert_{2}^2 + \Vert \Delta_{N} u_{n,1}\Vert_2^2 \leq (1+64\tau)\Vert {u}^n\Vert_{2}^2 + 16\tau\Vert( {u}^n)^3\Vert_{2}^2 + \|\Delta_{N}u^n\|_2^2\leq(2+64\tau)C_0^2 + 16\tau\tilde{C}_0^4C_0^2,
\end{equation}
where the following inequality has been employed:
\begin{equation}\label{eqn:inequality}
	\Vert (u^n)^3\Vert_2^2 = \|u^n\|_{\infty}^4\cdot\|u^n\|_2^2\leq \tilde{C}_0^4C_0^2.
\end{equation}
Under an $\mathcal{O}(1)$ constraint for the time-step size
\begin{equation}\label{eqn:first_tau}
	\tau\leq \min\left\{\frac{1}{64}, \frac{1}{16}\tilde{C}_0^{-4}\right\},
\end{equation}
we see that
\begin{equation*}
	\Vert u_{n,1}\Vert_{2}^2 + \Vert\Delta_{N}u_{n,1}\Vert_{2}^2\leq 4C_0^2,\quad\text{so that} ~\Vert u_{n,1}\Vert_{2} + \Vert\Delta_{N}u_{n,1}\Vert_2\leq2\sqrt{2}C_0.
\end{equation*}
Therefore, we obtain a rough $\ell^2$, $H_h^2$, and $\ell^{\infty}$ estimates of $u_{n,1}$: 
\begin{equation}\label{eqn:infinity_un1}
	\Vert u_{n,1}\Vert_{2},~\Vert\Delta_{N}u_{n,1}\Vert_{2}\leq 2C_0;\quad\Vert u_{n,1}\Vert_{\infty}\leq \hat{C}(\Vert u_{n,1}\Vert_{2} + \Vert\Delta_{N}u_{n,1}\Vert_{2})\leq2\sqrt{2}\hat{C}C_0=:\tilde{C}_1.
\end{equation}

\subsubsection{Preliminary $\ell^2$ and $H_h^2$ estimates of $G_2^{\frac{1}{2}}u_{n,1}$}
In addition, preliminary estimates of $\Vert G_2^{\frac{1}{2}}u_{n,1}\Vert_{2}$ and $\Vert G_2^{\frac{1}{2}}\Delta_Nu_{n,1}\Vert_2$ are needed to obtain a refined bound of the numerical solution at the next time step. Again, the reformulated numerical system is used in this estimate. Therefore, we take $i=1$ in the system \eqref{eqn:substage_1}-\eqref{eqn:substage_2}.

Taking a discrete $\ell^2$ inner product with equality \eqref{eqn:substage_1} by $G_2(I+\Delta_{N}^2)(u_{n,1}^{*} + u^n)$ results in 
\begin{equation}\label{eqn:3.2.3(1)}
	\begin{aligned}
		&\|G_2^{\frac{1}{2}}u_{n,1}^{*}\|_2^2 - \|G_2^{\frac{1}{2}}u^n\|_2^2 +	\|G_2^{\frac{1}{2}}\Delta_{N}u_{n,1}^{*}\|_2^2 - \|G_2^{\frac{1}{2}}\Delta_{N}u^n\|_2^2 + \tau(\|G^{*}u^n\|_2^2 + \|G^{**}u^n\|_2^2 \\
		&+ \langle G_1L_{\kappa}u^n, G_2(I+\Delta_{N}^2)u_{n,1}^{*}\rangle) = 0.
	\end{aligned}
\end{equation}
Taking a discrete $\ell^2$ inner product with equality \eqref{eqn:substage_2} by $2G_2(I+\Delta_{N}^2)u_{n,1}$ gives
\begin{equation}\label{eqn:3.2.3(2)}
	\begin{aligned}
		&\|G_2^{\frac{1}{2}}u_{n,1}\|_2^2 - \|G_2^{\frac{1}{2}}u_{n,1}^{*}\|_2^2 +
		\|G_2^{\frac{1}{2}}(u_{n,1} - u_{n,1}^{*})\|_2^2 +  \|G_2^{\frac{1}{2}}\Delta_{N}u_{n,1}\|_2^2 - \|G_2^{\frac{1}{2}}\Delta_{N}u_{n,1}^{*}\|_2^2 \\
		&+ \|G_2^{\frac{1}{2}}\Delta_{N}(u_{n,1} - u_{n,1}^{*})\|_2^2
		= 2\tau\langle G_1N_{\kappa}(u^n), G_2(I+\Delta_{N}^2)u_{n,1}\rangle.
	\end{aligned}
\end{equation}
A combination of equalities \eqref{eqn:3.2.3(1)} and \eqref{eqn:3.2.3(2)} yields 
\begin{equation}\label{eqn:3.2.3(3)}
	\begin{aligned}
		& \|G_2^{\frac{1}{2}}u_{n,1}\|_2^2 - \|G_2^{\frac{1}{2}}u^n\|_2^2 + \|G_2^{\frac{1}{2}}(u_{n,1} - u_{n,1}^{*})\|_2^2 + \|G_2^{\frac{1}{2}}\Delta_{N}u_{n,1}\|_2^2 - \|G_2^{\frac{1}{2}}\Delta_{N}u^n\|_2^2 \\
		& + \|G_2^{\frac{1}{2}}\Delta_{N}(u_{n,1} - u_{n,1}^{*})\|_2^2 
		+ \tau(\|G^{*}u^n\|_2^2 +  \|G^{**}u^n\|_2^2
		+\langle G_1L_{\kappa}u^n, G_2(I+\Delta_{N}^2)u_{n,i}^{*}\rangle) \\
		&= 2\tau\langle G_1N_{\kappa}(u^n), G_2(I+\Delta_{N}^2)u_{n,1}\rangle.
	\end{aligned}
\end{equation}
Meanwhile, an application of inequalities \eqref{eqn:proposition3.3(3)}-\eqref{eqn:proposition3.3(4)} in \Cref{proposition3.3} indicates that
\begin{equation}\nonumber
	\begin{aligned}
		&\tau\langle G_1L_{\kappa}u^n, G_2(I+\Delta_{N}^2)u_{n,1}^{*}\rangle + \|G_2^{\frac{1}{2}}(u_{n,1} - u_{n,1}^{*})\|_2^2 + \|G_2^{\frac{1}{2}}\Delta_{N}(u_{n,1} - u_{n,1}^{*})\|_2^2\\
		&\ge \tau(\|G^{*}u_{n,1}\|_2^2 + \|G^{**}u_{n,1}\|_2^2).
	\end{aligned}
\end{equation}
Going back \eqref{eqn:3.2.3(3)}, we arrive at
\begin{equation}\label{eqn:3.2.3(4)}
	\begin{aligned}
		&\|G_2^{\frac{1}{2}}u_{n,1}\|_2^2 - \|G_2^{\frac{1}{2}}u^n\|_2^2 + \|G_2^{\frac{1}{2}}\Delta_{N}u_{n,1}\|_2^2 - \|G_2^{\frac{1}{2}}\Delta_{N}u^n\|_2^2 + \tau(\|G^{*}u^n\|_2^2 + \|G^{*}u_{n,1}\|_2^2 \\
		&+ \|G^{**}u^n\|_2^2 + \|G^{**}u_{n,1}\|_2^2)
		\leq 2\tau\langle G_1N_{\kappa}(u^n), G_2(I+\Delta_{N}^2)u_{n,1}\rangle.
	\end{aligned}
\end{equation}
The RHS of \eqref{eqn:3.2.3(4)} contains two parts:
\begin{equation}\label{eqn:3.2.3(5)}
	\begin{aligned}
		&2\langle G_1N_{\kappa}(u^n), G_2(I+\Delta_{N}^2)u_{n,1}\rangle = -2\langle G_1(u^n)^3,G_2(I+\Delta_{N}^2)u_{n,1}\rangle \\
		&+ 2(\kappa + \varepsilon)\langle G_1 u^n,G_2(I+\Delta_{N}^2)u_{n,1}\rangle.
	\end{aligned}
\end{equation}
The first term could be analyzed as follows:
\begin{equation}\label{eqn:3.2.3(6)}
	\begin{aligned}
		&-2\langle G_1(u^n)^3, G_2(I+\Delta_N^2)u_{n,1}\rangle \leq 2\|(u^n)^3\|_2\cdot\|G_{1,2}^{\frac{1}{2}}(I+\Delta_{N}^2)u_{n,1}\|_2\\
		&\leq 16\tilde{C}_0^4C_0^2 + \frac{1}{16}\|G_{1,2}^{\frac{1}{2}}(I+\Delta_{N}^2)u_{n,1}\|_2^2
		\leq16\tilde{C}_0^4C_0^2 + \frac{1}{8}(\|G_{1,2}^{\frac{1}{2}}u_{n,1}\|_2^2 +  \|G_{1,2}^{\frac{1}{2}}\Delta_{N}^2u_{n,1}\|_2^2).
	\end{aligned}
\end{equation}
The second term of \eqref{eqn:3.2.3(5)} could be decomposed as two parts:
\begin{equation}
	\begin{aligned}
		&2(1+\varepsilon)\langle G_1u^n,G_2(I+\Delta_{N}^2)u_{n,1}\rangle\leq 2(1+\varepsilon)\|u^n\|_2\cdot\|G_{1,2}^{\frac{1}{2}}(I+\Delta_{N}^2)u_{n,1}\|_2\\
		&\leq64C_0 + \frac{1}{16}\|G_{1,2}^{\frac{1}{2}}(I+\Delta_{N}^2)u_{n,1}\|_2^2\leq 64C_0 + \frac{1}{8}(\|G_{1,2}^{\frac{1}{2}}u_{n,1}\|_2^2 + \|G_{1,2}^{\frac{1}{2}}\Delta_{N}^2u_{n,1}\|_2^2);
	\end{aligned}
\end{equation}
\begin{equation}\label{eqn:3.2.3(7)}
	\begin{aligned}
		&2(\kappa-1)\langle G_1u^n, G_2(I+\Delta_{N}^2)u_{n,1}\rangle\leq (\kappa-1)(\|G_{1,2}^{\frac{1}{2}}u^n\|_2^2 + \|G_{1,2}^{\frac{1}{2}}u_{n,1}\|_2^2+\|G_{1,2}^{\frac{1}{2}}\Delta_{N}u^n\|_2^2\\ & + \|G_{1,2}^{\frac{1}{2}}\Delta_{N}u_{n,1}\|_2^2).
	\end{aligned}
\end{equation}
For the two positive $\kappa$-independent terms in \eqref{eqn:3.2.3(7)}, we observe the following inequality, as given by \eqref{eqn:proposition3.2(2)} in \Cref{proposition3.2}:
\begin{equation}\label{eqn:3.2.3(8)}
	\begin{aligned}
		&\|G^{*}u^n\|_2^2 + \|G^{*}u_{n,1}\|_2^2 + \|G^{**}u^n\|_2^2 + \|G^{**}u_{n,1}\|_2^2\ge\frac{2}{3}(\|G_{1,2}^{\frac{1}{2}}u^n\|_2^2 + \|G_{1,2}^{\frac{1}{2}}u_{n,1}\|_2^2\\
		& + \|G_{1,2}^{\frac{1}{2}}\Delta_{N}u^n\|_2^2 + \|G_{1,2}^{\frac{1}{2}}\Delta_{N}u_{n,1}\|_2^2)+ \frac{1}{4}(\|G_{1,2}^{\frac{1}{2}}\Delta_{N}u^n\|_2^2
		+ \|G_{1,2}^{\frac{1}{2}}\Delta_{N}^2u^n\|_2^2 +\|G_{1,2}^{\frac{1}{2}}\Delta_{N}u_{n,1}\|_2^2\\
		&+ \|G_{1,2}^{\frac{1}{2}}\Delta_{N}^2u_{n,1}\|_2^2) + (\kappa-1)(\|G_{1,2}^{\frac{1}{2}}u^n\|_2^2 + \|G_{1,2}^{\frac{1}{2}}u_{n,1}\|_2^2+ \|G_{1,2}^{\frac{1}{2}}\Delta_{N}u^n\|_2^2 + \|G_{1,2}^{\frac{1}{2}}\Delta_{N}u_{n,1}\|_2^2).
	\end{aligned}
\end{equation}
As a result, a substitution of \eqref{eqn:3.2.3(6)}-\eqref{eqn:3.2.3(8)} into \eqref{eqn:3.2.3(4)} yields
\begin{equation}\nonumber
	\begin{aligned}
		&\|G_2^{\frac{1}{2}}u_{n,1}\|_2^2 - \|G_2^{\frac{1}{2}}u^n\|_2^2 + \|G_2^{\frac{1}{2}}\Delta_{N}u_{n,1}\|_2^2 - \|G_2^{\frac{1}{2}}\Delta_{N}u^n\|_2^2 +\frac{2\tau}{3}\|G_{1,2}^{\frac{1}{2}}u^n\|_2^2 +  \frac{5\tau}{12}\|G_{1,2}^{\frac{1}{2}}u_{n,1}\|_2^2\\
		& +\frac{\tau}{4}( \|G_{1,2}^{\frac{1}{2}}\Delta_{N}u^n\|_2^2+ \|G_{1,2}^{\frac{1}{2}}\Delta_{N}^2u^n\|_2^2 + \|G_{1,2}^{\frac{1}{2}}\Delta_{N}u_{n,1}\|_2^2)\leq \tau(64C_0 + 16\tilde{C}_0^4C_0^2).
	\end{aligned}
\end{equation}
Consequently, the following combination of $\ell^2$ and $H_h^2$ bounds become available for $G_2^{\frac{1}{2}}u_{n,1}$:
\begin{equation}\label{eqn:3.2.3_final}
	\|G_2^{\frac{1}{2}}u_{n,1}\|_2^2 +	\|G_2^{\frac{1}{2}}\Delta_{N}u_{n,1}\|_2^2\leq \|G_2^{\frac{1}{2}}u^n\|_2^2 + \|G_2^{\frac{1}{2}}\Delta_{N}u^n\|_2^2 + \tau(64C_0 + 16\tilde{C}_0^4C_0^2).
\end{equation}

\subsubsection{Preliminary $\ell^{\infty}$ estimate of $u^{n+1}$}
We aim to derive a bound for $\Vert u^{n+1} \Vert_{\infty}$. By taking $i=2$, the first term of equality \eqref{3.2.2(6)} can be analyzed as follows:
\begin{equation}\label{3.2.4(1)}
	2\langle G_2u_{n,1}^3, (I+\Delta_{N}^2)u^{n+1}\rangle
	\leq 16\Vert u_{n,1}^3\Vert_{2}^2 + \frac{1}{8}(\Vert G_2^{\frac{1}{2}}u^{n+1}\Vert_{2}^2 + \Vert G_2^{\frac{1}{2}}\Delta_{N}^2u^{n+1}\Vert_{2}^2).
\end{equation}
The second term is also decomposed as two parts: 
\begin{equation}
	\begin{aligned}
		&2(\kappa-1)\langle G_2u_{n,1},(I+\Delta_{N}^2)u^{n+1}\rangle\\
		&\leq(\kappa-1)(\Vert G_2^{\frac{1}{2}}u_{n,1}\Vert_{2}^2 + \Vert G_2^{\frac{1}{2}}{u}^{n+1}\Vert_{2}^2 + \|G_2^{\frac{1}{2}}\Delta_{N}u_{n,1}\|_2^2+ \|G_2^{\frac{1}{2}}\Delta_{N}u^{n+1}\|_2^2)\\
		&\leq (\kappa-1)[(64C_0+16\tilde{C}_0^4C_0^2)\tau + \|G_2^{\frac{1}{2}}u^n\|_2^2 + \|G_2^{\frac{1}{2}}\Delta_{N}u^n\|_2^2 + \|G_2^{\frac{1}{2}}u^{n+1}\|_2 +  \|G_2^{\frac{1}{2}}\Delta_{N}u^{n+1}\|_2^2];\\
		&2(1+\varepsilon)\langle G_2u_{n,1}, (I+\Delta_{N}^2){u}^{n+1}\rangle
		\le64\Vert u_{n,1}\Vert_{2}^2 +  \frac{1}{8}(\Vert G_2^{\frac{1}{2}}{u}^{n+1}\Vert_{2}^2 + \|G_2^{\frac{1}{2}}\Delta_{N}^2u^{n+1}\|_2^2).
	\end{aligned}
\end{equation}
where the preliminary estimate \eqref{eqn:3.2.3_final}  for the intermediate-stage solution has been applied in the last step. Following inequality \eqref{eqn:proposition3.2(1)} in \Cref{proposition3.2}, we have
\begin{equation}\label{3.2.4(2)}
	\begin{aligned}
		&\Vert G_2^{*} {u}^{n+1}\Vert_{2}^2 + \Vert G_2^{*} {u}^{n}\Vert_{2}^2 + \Vert G_2^{**} {u}^{n+1}\Vert_{2}^2 + \Vert G_2^{**} {u}^{n}\Vert_{2}^2 \geq\frac{1}{4}(\Vert G_2^{\frac{1}{2}}\Delta_{N} {u}^{n+1}\Vert_{2}^2 + \Vert G_2^{\frac{1}{2}}\Delta_{N} {u}^{n}\Vert_{2}^2 \\
		&+ \Vert G_2^{\frac{1}{2}}\Delta_{N}^2 {u}^{n+1}\Vert_{2}^2 + \Vert G_2^{\frac{1}{2}}\Delta_{N}^2 {u}^{n}\Vert_{2}^2)
		+ (\kappa-1)(\Vert G_2^{\frac{1}{2}}{u}^{n+1}\Vert_{2}^2 + \Vert G_2^{\frac{1}{2}}{u}^n\Vert_{2}^2 + \Vert G_2^{\frac{1}{2}}\Delta_{N}{u}^{n+1}\Vert_{2}^2\\
		& + \Vert G_2^{\frac{1}{2}}\Delta_{N}{u}^n\Vert_{2}^2) + \frac{2}{3}(\|G_2^{\frac{1}{2}}u^{n+1}\|_2^2 + \|G_2^{\frac{1}{2}}\Delta_{N}u^{n+1}\|_2^2).
	\end{aligned}
\end{equation}
Of course, a substitution of inequalities \eqref{3.2.4(1)}-\eqref{3.2.4(2)} into \eqref{3.2.2(5)} leads to
\begin{equation*}
	\begin{aligned}
		&\|u^{n+1}\|_2^2 - \|u^n\|_2^2 + \|\Delta_{N}u^{n+1}\|_2^2 - \|\Delta_{N}u^n\|_2^2 + \frac{\tau}{4}(\|G_2^{\frac{1}{2}}\Delta_{N}^2u^n\|_2^2 + \Vert G_2^{\frac{1}{2}}\Delta_{N} {u}^{n+1}\Vert_{2}^2\\ 
		&+ \Vert G_2^{\frac{1}{2}}\Delta_{N} {u}^{n}\Vert_{2}^2)
		+ \frac{5}{12}\tau\|G_2^{\frac{1}{2}}u^{n+1}\|_2^2\leq 64\tau\|u_{n,1}\|_2^2 + 16\tau\|u_{n,1}^3\|_2^2 + (\kappa-1)(64C_0+16\tilde{C}_0^4C_0^2)\tau^2 ,
	\end{aligned}
\end{equation*}
or, equivalently,
\begin{equation}\nonumber
	\begin{aligned}
		&\Vert {u}^{n+1}\Vert_{2}^2 + \Vert\Delta_{N} u^{n+1}\Vert_2^2 \leq\Vert {u}^{n}\Vert_{2}^2 + \Vert\Delta_{N}u^n\Vert_{2}^2 +  64\tau\|u_{n,1}\|_2^2 + 16\tau\|u_{n,1}^3\|_2^2 \\
		&+ (\kappa-1)(64C_0+16\tilde{C}_0^4C_0^2)\tau^2
		\leq 2C_0^2 + 256C_0^2\tau + 64\tau\tilde{C}_1^4C_0^2 + (\kappa-1)(64C_0+16\tilde{C}_0^4C_0^2)\tau^2,
	\end{aligned}
\end{equation}
in which the following inequality has been used:
\begin{equation}\nonumber
	\Vert u_{n,1}^3\Vert_2^2  = \|u_{n,1}\|_{\infty}^4\|u_{n,1}\|_2^2\leq 4\tilde{C}_1^4C_0^2.
\end{equation}
Under an $\mathcal{O}(1)$ constraint for the time-step size (more constraint than \eqref{eqn:first_tau})
\begin{equation}\label{eqn:second_tau}
	\tau\leq\min\left\{\frac{1}{256},\frac{1}{64}\tilde{C}_1^{-4},(64\kappa)^{-\frac{1}{2}},\frac{1}{4}\tilde{C}_0^{-2}\kappa^{-\frac{1}{2}}\right\},
\end{equation}
we get 
\begin{equation}\label{eqn:l2h2_un+1}
	\Vert {u}^{n+1}\Vert_{2}^2 + \Vert\Delta_{N} u^{n+1}\Vert_2^2 \leq 6C_0^2,\quad\text{so that}~\Vert {u}^{n+1}\Vert_{2} + \Vert\Delta_{N} u^{n+1}\Vert_2\leq 2\sqrt{3}C_0.
\end{equation}
Also note that $C_0$ is $\kappa$-independent and time-independent. Again, an application of \Cref{lemma3.2} implies the following $\Vert\cdot\Vert_{\infty}$ bound at the next time step
\begin{equation}\label{eqn:infinity_un+1}
	\Vert {u}^{n+1}\Vert_{\infty}\leq \hat{C}\left(\Vert  {u}^{n+1} \Vert_{2} + \Vert \Delta_{N} {u}^{n+1}\Vert_{2}\right)\leq2\sqrt{3}\hat{C}C_0=:\tilde{C}_2 . 
\end{equation}  
Notice that $\tilde{C}_2$ is also a $\kappa$-independent and global-in-time constant. 
\subsubsection{Justification of the stabilization parameter $\kappa$ and \textit{a-priori} assumption \eqref{eqn:ass_energy}}
On the other hand, by making comparison between the $\ell^\infty$ bounds for $u_{n,i}~(i=0,1,2)$, given by \eqref{eqn:infinity_un}, \eqref{eqn:infinity_un1}, and \eqref{eqn:infinity_un+1}, respectively, it is clear that $\tilde{C}_2\geq\tilde{C}_1\geq\tilde{C}_0$ for the ERK\textit{(2,2)} scheme \eqref{2.6}. In turn, we could take 
\begin{equation}\label{eqn:kappa}
	\kappa = \max \left\{\frac{|3\tilde{C}_2^2-\varepsilon|}{2},1\right\} , 
\end{equation} 
in which $\tilde{C}_2\ge\beta=\max_{i=0,1,2}\|u_{n,i}\|_{\infty}$. 
Notice that $\kappa$ is an $\mathcal{O}(1)$ constant, and contains no singular dependence on any physical parameter. With this choice of $\kappa$, a fixed constant, we could take the time-step size $\tau$ satisfying \eqref{eqn:second_tau} for ERK\textit{(2,2)}, such that original energy stability becomes available at the next time step by \Cref{theorem3.1}:
\begin{equation}\label{3.2.5_energy}
	E_{N}( {u}^{n+1})\leq E_{N}( {u}^n)\leq E_{N}(u^0) = C_e . 
\end{equation}
This in turn recovers the \textit{a-priori} assumption \eqref{eqn:ass_energy} at the next time step, so that an induction argument can be effectively applied. Therefore, we have proved the main theorem \Cref{theorem3.2}.
\begin{myremark}
	Obviously, the above $\ell^2$, $H_h^2$, and $\ell^{\infty}$ estimates of $u^{n+1}$, namely \eqref{eqn:l2h2_un+1} and \eqref{eqn:infinity_un+1}, turn out to be too rough, since we did not make use of the variational energy structure in the analysis. In fact, to obtain an energy dissipation at the theoretical level, an $\ell^{\infty}$ bound of the numerical solution at the time step $t^{n+1}$ has to be derived, due to the nonlinear term involved. On the other hand, with such a rough bound at hand, we are able to justify the artificial parameter value in \eqref{eqn:kappa}, so that energy stability becomes theoretically available at the next time step. With a theoretical justification of the energy stability analysis, we are able to obtain much sharper $\ell^2$, $H_h^2$, and $\ell^{\infty}$ bounds for the numerical solution $u^{n+1}$.
	
	In more details, with the energy stability result \eqref{3.2.5_energy}, we apply a similar analysis in \eqref{eqn:wise2009} and obtain
	\begin{equation}\nonumber
		\|u^{n+1}\|_2,~\|\Delta_{N}u^{n+1}\|_2\leq C_0:=2\sqrt{C_e + |\Omega|},
	\end{equation}
	which is a global-in-time constant. In turn, a much sharper maximum-norm bound for $u^{n+1}$ also becomes available, with the help of \Cref{lemma3.2}:
	\begin{equation*}
		\|u^{n+1}\|_{\infty}\leq\hat{C}(\|u^{n+1}\|_2 + \|\Delta_{N}u^{n+1}\|_2)\leq 2\hat{C}C_0=:\tilde{C}_3.
	\end{equation*}
	In other words, the $H_h^2$ bound $C_0$ and the $\ell^{\infty}$ bound $\tilde{C}_3$ turns out to be global-in-time constants.
\end{myremark}

\section{Optimal rate convergence analysis}
\label{sec4}
We denote by $u_e$ the exact solution to equation \eqref{1.1}, and assume it satisfies the following regularity:
\begin{equation*}
	u_e\in \mathcal{R} :=  H^3(0, T; C^0) \cap H^2(0, T; H^{m_0}) \cap L^\infty(0, T; H^{m_0+4}).
\end{equation*}
A rigorous error estimate for the ERK\textit{(2,2)} scheme \eqref{2.6} will be derived under this regularity. To this end, the following lemma is needed. 

\begin{mylemma}\label{lemma4.1}
	For any $ {u}, {v}\in\mathcal{M}_{N}$ satisfying $\Vert {u}\Vert_{\infty},~\Vert {v}\Vert_{\infty}\leq\beta$ that is introduced in \Cref{theorem3.1}, we have
	\begin{displaymath}
		\Vert N_{\kappa}( {u}) - N_{\kappa}( {v})\Vert_{2}\leq3\kappa\Vert {u}- {v}\Vert_{2}.
	\end{displaymath}
\end{mylemma}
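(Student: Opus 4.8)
The plan is to exploit the explicit polynomial structure of $N_\kappa$ and reduce the whole statement to a pointwise multiplier estimate. First I would rewrite the nonlinear map using $f(u)=u^3-\varepsilon u$, so that $N_\kappa(u)=\kappa u-f(u)=(\kappa+\varepsilon)u-u^3$. At each grid point the cubic term then factors cleanly through the identity $u^3-v^3=(u-v)(u^2+uv+v^2)$, giving
\begin{equation*}
	N_\kappa(u)-N_\kappa(v)=(\kappa+\varepsilon)(u-v)-(u^3-v^3)=(u-v)\bigl[(\kappa+\varepsilon)-(u^2+uv+v^2)\bigr].
\end{equation*}
Since this holds entry-wise on $\Omega_N$, the entire estimate collapses to bounding the scalar multiplier $m:=(\kappa+\varepsilon)-(u^2+uv+v^2)$ uniformly over all grid points, the value of $m$ depending on the point through $u,v$.

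The heart of the argument is therefore the pointwise bound $|m|\le3\kappa$, and this is the step I expect to require the most care. Because $|u|,|v|\le\beta$, the quadratic form obeys $0\le u^2+uv+v^2\le3\beta^2$; nonnegativity follows from $u^2+uv+v^2=(u+\tfrac12 v)^2+\tfrac34 v^2$, and the upper bound from $u^2,v^2,uv\le\beta^2$. For the upper estimate on $m$ I would extract from the defining inequality \eqref{classic_kappa}, by taking $\xi=0$ inside the maximum, that $\kappa\ge\varepsilon/2$, whence $m\le(\kappa+\varepsilon)-0\le3\kappa$. For the lower estimate I would instead take $\xi=\beta$ in \eqref{classic_kappa}, which yields $2\kappa\ge3\beta^2-\varepsilon$, i.e. $3\beta^2\le2\kappa+\varepsilon$; then $m\ge(\kappa+\varepsilon)-3\beta^2\ge-\kappa\ge-3\kappa$. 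Combining the two one-sided bounds gives $|m|\le3\kappa$ at every grid point. The only subtlety is that the sign of $3\beta^2-\varepsilon$ is not fixed a priori, so both consequences of \eqref{classic_kappa} must be read off carefully from the maximum it contains.

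Finally, squaring the factored identity, multiplying by $h^2$, and summing over $0\le p,q\le N-1$, I would pull out the uniform pointwise bound $|m_{p,q}|\le3\kappa$ to obtain
\begin{equation*}
	\Vert N_\kappa(u)-N_\kappa(v)\Vert_2^2=h^2\sum_{p,q}|m_{p,q}|^2\,|u_{p,q}-v_{p,q}|^2\le(3\kappa)^2\,\Vert u-v\Vert_2^2,
\end{equation*}
and taking square roots delivers the claimed inequality. Apart from the two elementary deductions from \eqref{classic_kappa} highlighted above, no genuine difficulty is anticipated, since the discrete $\ell^2$ norm interacts with the pointwise factorization in a completely transparent way.
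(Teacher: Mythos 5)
Your proposal is correct and follows essentially the same route as the paper: the paper's (one-line) proof rests on exactly the factorization $N_{\kappa}(u)-N_{\kappa}(v)=(u-v)(\varepsilon+\kappa-u^2-uv-v^2)$ together with the $\ell^{\infty}$ bounds and the defining inequality for $\kappa$. Your write-up simply makes explicit the two one-sided deductions from \eqref{classic_kappa} (namely $\varepsilon\le 2\kappa$ and $3\beta^2\le 2\kappa+\varepsilon$) that the paper leaves implicit, and both are carried out correctly.
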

\begin{proof}
	Since $N_{\kappa}(u) - N_{\kappa}(v) = (u-v)(\varepsilon + \kappa - u^2 - v^2 - uv)$, using $\|u\|_{\infty}\leq\beta$, $\|v\|_{\infty}\leq\beta$, and $\kappa\ge\max\limits_{|\xi|\le\beta}\frac{|3\xi^2-\varepsilon|}{2}$, we obtain the result.
\end{proof}

Meanwhile, we denote $U^n$ as the interpolation values of the projection solution $U_N$ at the grid points at time instant $t_n:~U_{p,q}^n := U_N(x_i,y_j,t_n)$. The initial data is given by
\begin{equation*}
	u_{p,q}^0 = U_{p,q}^0 := U_N(x_p,y_q,t=0).
\end{equation*}
The error grid function is defined as 
\begin{equation}\nonumber
	e^n := U^n - u^n, \quad 0\leq n\leq N_t-1.
\end{equation}
For the proposed ERK\textit{(2,2)} scheme \eqref{2.6}, the convergence result is stated below.
\begin{mytheorem}\label{theorem4.1}
	Given an initial data with sufficient regularity, suppose the unique solution for the SH equation \eqref{1.1} is of regularity class $\mathcal{R}$. Provided that $\tau$ and $h$ are sufficiently small, then, for the ERK\textit{(2,2)} scheme \eqref{2.6}, the following $\ell^\infty(0, T; \ell^2)$ convergence estimate is valid for any $\kappa$ satisfying \eqref{eqn:kappa}:
	\begin{equation}\nonumber
		\Vert u_e(t^n) - u^n\Vert_{2}\leq C(\tau^2+h^{m_0}),\quad\forall n\leq N_t,
	\end{equation}
	where $C>0$ is dependent of $\kappa$ and $\Omega$, but independent of $\tau$ and $h$.
\end{mytheorem}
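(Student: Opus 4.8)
The plan is to establish the optimal $\ell^2$ error estimate via a standard consistency-plus-stability argument, where the crucial enabling ingredient is the uniform $\ell^\infty$ bound on the numerical solution furnished by \Cref{theorem3.2}. First I would set up the error equation: subtracting the exact-solution Duhamel-type identity \eqref{duhamel} (satisfied by the projection $U^n$ up to a truncation error) from the scheme \eqref{2.6}, and writing $e^n = U^n - u^n$ as well as the intermediate error $e_{n,1} = U_{n,1}-u_{n,1}$, I obtain an error recursion of the form
\begin{equation}\nonumber
	e^{n+1} = \varphi_0(\tau L_{\kappa})e^n + \tau\varphi_1(\tau L_{\kappa})\bigl[N_\kappa(U_{n,1})-N_\kappa(u_{n,1})\bigr] + \tau^n_{\mathrm{loc}},
\end{equation}
with an analogous equation at the intermediate stage. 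Here $\tau^n_{\mathrm{loc}}$ collects the local truncation error, which by the \emph{stiff convergence order two} property of ERK\textit{(2,2)} (cf.\ the coefficient conditions of \cite{hochbruck2005,Maset2008}) and the assumed regularity class $\mathcal{R}$ is of size $\mathcal{O}(\tau^2+h^{m_0})$ in the $\ell^2$ norm; the spatial $h^{m_0}$ contribution enters through the Fourier projection/interpolation error.

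Next I would take the discrete $\ell^2$ inner product of the error recursion with $e^{n+1}$ and estimate term by term. The linear semigroup factor is controlled by $\|\varphi_0(\tau L_\kappa)\|_2\le 1$ and $\|\varphi_1(\tau L_\kappa)\|_2\le 1$ (both follow from \Cref{lemma3.1} and the non-negativity of $L_\kappa$), so the homogeneous part is non-expansive. The nonlinear difference is where the uniform bound matters: because \Cref{theorem3.2} guarantees $\|u^n\|_\infty,\|u_{n,1}\|_\infty\le\beta$ uniformly in $n$, and the exact projection $U^n$ inherits an $\ell^\infty$ bound from the regularity of $u_e$, I may invoke \Cref{lemma4.1} to deduce
\begin{equation}\nonumber
	\bigl\|N_\kappa(U_{n,1})-N_\kappa(u_{n,1})\bigr\|_2\le 3\kappa\,\|e_{n,1}\|_2.
\end{equation}
This is precisely the local Lipschitz estimate that replaces the global Lipschitz assumption criticized in the introduction, and it is valid only because the global-in-time $\ell^\infty$ control has already been secured. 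I would then bound $\|e_{n,1}\|_2$ in terms of $\|e^n\|_2$ (using the stage formula and again the $\varphi_i$ bounds), feeding this back to obtain a closed recursive inequality $\|e^{n+1}\|_2\le(1+C\kappa\tau)\|e^n\|_2 + C\tau(\tau^2+h^{m_0})$.

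Finally, applying the discrete Gr\"onwall inequality to this recursion over $0\le n\le N_t-1$ with $e^0=0$ yields $\|e^n\|_2\le C\,e^{C\kappa T}(\tau^2+h^{m_0})$, giving the claimed $\ell^\infty(0,T;\ell^2)$ rate with a constant depending on $\kappa$ and $\Omega$ but not on $\tau,h$. I expect the main obstacle to be the \emph{consistency analysis}, namely a rigorous derivation that the local truncation error of the two-stage exponential scheme is genuinely $\mathcal{O}(\tau^2)$ in the stiff regime: one must carefully Taylor-expand $N_\kappa(u_e(t_n+c_1\tau))$ against the stage value and verify that the order-two conditions on $a_{i,j}(z)$ eliminate the $\mathcal{O}(\tau)$ and the leading $\mathcal{O}(\tau^2)$ stiff defects without introducing factors that blow up with $L_\kappa$; the bounded-ness of $\varphi_k(\tau L_\kappa)$ together with the $H^{m_0+4}$ temporal-spatial regularity of $u_e$ is what keeps these defect terms uniformly controlled. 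A secondary technical point is ensuring the projection error and the interpolation error combine cleanly into the single $h^{m_0}$ term without aliasing contamination, which is handled by the aliasing-free spectral framework noted after \Cref{definition2.1}.
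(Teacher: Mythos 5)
Your proposal is correct and follows the same overall skeleton as the paper's proof: a consistency estimate with defect $\zeta^n$ of size $\mathcal{O}(\tau^2+h^{m_0})$, an error recursion obtained by subtracting the scheme from the projected exact solution, the key Lipschitz bound $\|\tilde N_\kappa(U_{n,j},u_{n,j})\|_2\le 3\kappa\|e_{n,j}\|_2$ from \Cref{lemma4.1} (made legitimate only by the uniform $\ell^\infty$ control of \Cref{theorem3.2}), a stage estimate $\|e_{n,1}\|_2\lesssim\|e^n\|_2$, and a discrete Gr\"onwall argument. Where you genuinely diverge is in the treatment of the linear part: you control $\varphi_0(\tau L_\kappa)$ and $\varphi_1(\tau L_\kappa)$ directly as non-expansive operators on $\ell^2$ (valid since $L_\kappa$ is symmetric positive definite and $0<\varphi_k(z)\le 1$ for $z>0$), which lets you close the recursion by taking norms of the Duhamel-form error equation. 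The paper instead recasts the error equation as the two-substage system \eqref{4.3}--\eqref{4.4}, tests with $e_{n,i}^*+e^n$ and $2e_{n,i}$, and invokes \Cref{proposition3.2} and \Cref{proposition3.3} to absorb the cross term $\langle G_iL_\kappa e^n,e_{n,i}^*\rangle$ into nonnegative dissipation terms that are then discarded. Your route is more elementary and shorter for the purpose of an $\ell^2$ rate; the paper's route keeps the dissipative structure explicit (mirroring the stability analysis of \Cref{sec3}) and yields the extra nonnegative terms $\|G_i^*e^n\|_2^2$ etc.\ for free, though they are not needed for the stated estimate. Both arrive at a recursion of the form $\|e^{n+1}\|_2^2\le(1+C\kappa^2\tau)\|e^n\|_2^2+\tau\|\zeta^n\|_2^2$ under an $\mathcal{O}(1)$ restriction $\tau\lesssim\kappa^{-2}$, and your identification of the consistency analysis (the stiff order-two defect expansion) as the main unproved technical burden matches what the paper itself leaves to a one-line assertion.
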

\begin{proof}
	For the exact solution $u_e$ and its interpolation $U$, a careful consistency analysis implies that
	\begin{equation}\label{4.1}
		\begin{aligned}
			U_{n,1} &= \varphi_0(c_1\tau L_{\kappa}) U^n + c_1\tau\varphi_{1}(c_1\tau L_{\kappa}) N_{\kappa}(U^n), \\
			U^{n+1} &= \varphi_0(\tau L_{\kappa}) U^n + \tau\varphi_{1}(\tau L_{\kappa})N_{\kappa}(U_{n,1})+ \tau \zeta^n,
		\end{aligned}
	\end{equation}
	with $\Vert \zeta^n\Vert_{2}\leq \bar{C}(\tau^2 + h^{m_0})$. Notice that the profile $U_{n,1}$ is constructed, based on the projection solution $U^n$. In turn, subtracting the numerical solution \eqref{2.6} from the consistency estimate \eqref{4.1} yields
	\begin{equation}\label{4.2}
		\begin{aligned}
			e_{n,1} &=  \varphi_0(c_1\tau L_{\kappa}) e^n + c_1\tau\varphi_{1}(c_1\tau L_{\kappa})\tilde{N}_{\kappa}(U^n, {u}^n),\\
			e^{n+1} &= \varphi_0(\tau L_{\kappa}) e^n + \tau\varphi_{1}(\tau L_{\kappa})\tilde{N}_{\kappa}(U_{n,1},u_{n,1}) + \tau \zeta^n, 
		\end{aligned}
	\end{equation}
	with $\tilde{N}_{\kappa}(a,b) = N_{\kappa}(a) - N_{\kappa}(b)$.
	To carry out the error analysis in a more convenient way, we denote $e_{n,i}^{*}=\varphi_0\left(c_i\tau L_{\kappa}\right)e^n$ ($i=1,2$), so that the evolutionary equation \eqref{4.2} could be rewritten as the following two-substage system:
	\begin{align}
		\frac{e_{n,i}^{*} - e^n}{\tau} &= -c_iL_{\kappa}\varphi_{1}(c_i\tau L_{\kappa})e^n,\label{4.3}\\
		\frac{e_{n,i} - e_{n,i}^{*}}{\tau} &= c_i\varphi_{1}(c_i\tau L_{\kappa})\tilde{N}_{\kappa}(U_{n,j}, u_{n,j}) + (i-1)\zeta^n\label{4.4}.
	\end{align}
	Taking a discrete $\ell^2$ inner product with \eqref{4.3} by $e_{n,i}^{*}+e^n$ gives 
	\begin{equation}\label{4.5}
		\Vert e_{n,i}^{*} \Vert_{2}^2 - \Vert e^{n} \Vert_{2}^2 + \tau(\Vert G_i^*e^n\Vert_{2}^2 + \langle G_iL_{\kappa}e^n,e_{n,i}^{*}\rangle)  = 0.
	\end{equation}
	Taking a discrete $\ell^2$ inner product with \eqref{4.4} by $2e_{n,i}$ yields
	\begin{equation}\label{4.6}
		\Vert e_{n,i}\Vert_{2}^2 - \Vert e_{n,i}^{*}\Vert_{2}^2 + \Vert e_{n,i}-e_{n,i}^{*}\Vert_{2}^2 
		= 2\tau\langle G_i\tilde{N}_{\kappa}(U_{n,j}, u_{n,j}),e_{n,i}\rangle  + 2(i-1)\tau\langle  \zeta^n, e_{n,i}\rangle .
	\end{equation}
	As a result, a combination of equalities \eqref{4.5} and \eqref{4.6} leads to 
	\begin{equation}\nonumber
		\begin{aligned}
			&\Vert e_{n,i}\Vert_{2}^2 - \Vert e^{n}\Vert_{2}^2 + \Vert e_{n,i}-e_{n,i}^{*}\Vert_{2}^2 + \tau(\Vert G_i^*e^n\Vert_{2}^2 + \langle G_iL_{\kappa}e^n,e_{n,i}^{*}\rangle)\\
			&= 2\tau\langle G_i\tilde{N}_{\kappa}(U_{n,j}, u_{n,j}),e_{n,i}\rangle  + 2(i-1)\tau\langle  \zeta^n, e_{n,i}\rangle .
		\end{aligned}
	\end{equation} 
	Meanwhile, an application of  \eqref{eqn:proposition3.2(1)} in \Cref{proposition3.2} and \eqref{eqn:proposition3.3(1)} in \Cref{proposition3.3} results in 
	\begin{equation}\nonumber
		\begin{aligned}
			& \|G_i^{*}e_{n,i}\|_2^2\ge\frac{1}{4}\|G_i^{\frac{1}{2}}\Delta_{N}e_{n,i}\|_2^2 + (\kappa-1)\|G_i^{\frac{1}{2}}e_{n,i}\|_2^2 + \frac{2}{3}\|G_i^{\frac{1}{2}}e_{n,i}\|_2^2;\\
			&\tau\langle G_iL_{\kappa}e^n,e_{n,i}^{*}\rangle  + \Vert e_{n,i} - e_{n,i}^{*}\Vert_{2}^2\ge\tau\Vert G_i^*e_{n,i}\Vert_{2}^2.			
		\end{aligned} 
	\end{equation}
	Then we obtain
	\begin{equation}\label{4.7}
		\begin{aligned}
			&\Vert e_{n,i}\Vert_{2}^2 - \Vert e^{n}\Vert_{2}^2 + \tau(\Vert G_i^*e^n\Vert_{2}^2 + \frac{1}{4}\|G_i^{\frac{1}{2}}\Delta_{N}e_{n,i}\|_2^2 + (\kappa-1)\|G_i^{\frac{1}{2}}e_{n,i}\|_2^2 + \frac{2}{3}\|G_i^{\frac{1}{2}}e_{n,i}\|_2^2)\\
			&\leq2\tau\langle G_i\tilde{N}_{\kappa}(U_{n,j}, u_{n,j}), e_{n,i}\rangle  + 2(i-1)\tau\langle  \zeta^n, e_{n,i}\rangle .
		\end{aligned}
	\end{equation}
	Moreover, an application of \Cref{lemma4.1} implies that 
	\begin{equation}\nonumber
		\Vert\tilde{N}_{\kappa}(U_{n,j}, u_{n,j}) \Vert_{2}\leq3\kappa\Vert e_{n,j}\Vert_{2},
	\end{equation}
	which would be used for the derivation of the following nonlinear inner product term estimate ($i=1$):
	\begin{equation}\label{eqn:4(1)}
		2\langle G_1\tilde{N}_{\kappa}(U^n, {u}^n),e_{n,1}\rangle\leq 36\kappa^2\Vert e^n\Vert_{2}^2 + \frac{1}{4}\Vert G_1^{\frac{1}{2}}e_{n,1} \Vert_{2}^2.
	\end{equation}	
	Its substitution into inequality \eqref{4.7} yields
	\begin{equation}\nonumber
		\Vert e_{n,1}\Vert_{2}^2 - \Vert e^{n}\Vert_{2}^2
		\leq36\kappa^2\tau\Vert e^n\Vert_{2}^2.
	\end{equation}
	Provided that $\tau\leq(6\kappa)^{-2}$, a preliminary $\ell^2$ error estimate for the intermediate-stage error solution $u_{n,1}$ is obtained
	\begin{equation}\nonumber
		\Vert e_{n,1}\Vert_{2} \leq\left[1 + 36\kappa^2\tau\right]^{\frac{1}{2}}\Vert e^n\Vert_{2}\leq\sqrt{2}\Vert e^n\Vert_{2}.
	\end{equation}
	
	For $i=2$, the analysis for the nonlinear error inner product term of inequality \eqref{eqn:4(1)} could be similarly established: 
	\begin{equation}\label{4.9}
		2\langle G_2\tilde{N}_{\kappa}(U_{n,1},u_{n,1}),e^{n+1}\rangle 
		\leq72\kappa^2\Vert e^{n}\Vert_{2}^2 + \frac{1}{4}\Vert G_2^{\frac{1}{2}}e^{n+1}\Vert_{2}^2,
	\end{equation}
	in which the estimate $\Vert e_{n,1}\Vert_2\leq\sqrt{2}\Vert e^n\Vert_2$ has been applied in the last step.
	\noindent A bound for the truncation error inner product term is more straightforward:
	\begin{equation}\label{4.10}
		2\langle \zeta^n,e^{n+1} \rangle \leq\Vert \zeta^n \Vert_{2}^2+\Vert e^{n+1}\Vert_{2}^2.
	\end{equation}
	Afterwards, a substitution of inequalities \eqref{4.9} and \eqref{4.10} into \eqref{4.7} leads to 
	\begin{equation}\nonumber
		\Vert e^{n+1}\Vert_{2}^2 - \Vert e^{n}\Vert_{2}^2 \leq 72\kappa^2\tau\Vert e^{n}\Vert_{2}^2+ \tau\Vert e^{n+1}\Vert_{2}^2 + \tau\Vert \zeta^n\Vert_{2}^2.
	\end{equation}
	In turn, an application of the Gr\"{o}nwall's inequality results in the desired convergence estimate:
	\begin{equation}\nonumber
		\Vert e^{n+1}\Vert_{2}\leq C(\tau^2+h^{m_0}),
	\end{equation}
	due to the fact that $\Vert \zeta^n\Vert_{2}\leq \bar{C}(\tau^2+h^{m_0})$. This validates the convergence estimate.
\end{proof}

\section{Numerical experiments: Comparison with other energy-stable methods}
\label{sec5}
In this section, we present a few 2-D numerical results for the SH equation \eqref{1.1}), to demonstrate the efficiency, accuracy, and long-time performance of the ERK\textit{(2,2)} scheme \eqref{2.6}. To preserve energy stability, the condition $\kappa=\max\left\{\frac{|3\tilde{C}_2^2-\varepsilon|}{2},1\right\}$
is theoretically required in \Cref{theorem3.2}. Although there is no maximum principle for the SH equation \eqref{1.1}, in practice we observe that the numerical solution is always bounded in $[-1,1]$ during the whole simulation process. Therefore, it suffices to set $\kappa=2$ in the computation. 

\subsection{Convergence test}
The computational domain is taken as $\Omega =[0,32]^2$, and the following smooth initial data is enforced: 
\begin{equation}\nonumber
	u_0(x,y) = 0.01\times[\cos(\pi x) + \cos(\pi y) + \cos(0.25\pi x) + \cos(0.25\pi y)] , 
\end{equation}
on the uniform mesh $N=N_x=N_y$ with $N=256$. The final time is set as $T=5$. To obtain the numerical errors, we set the numerical solution obtained by the ERK\textit{(2,2)} scheme with $\tau = 0.1\times2^{-9}$ as the reference one. Afterwards, we perform the numerical simulations of the first-order ETD1, IMEX1, and second-order ERK\textit{(2,2)}, ETDRK2, IMEX-RK(2,2) schemes using time-step sizes $\tau = 2^{-k} (k=1,\ldots,9$), with two different parameters, $\varepsilon=0.25$ and $0.025$. The results are displayed in \Cref{fig_err}, and through observation we observe that the exponential-type schemes consistently outperform the non-exponential-type ones, in terms of computational accuracy and efficiency. Further, ERK\textit{(2,2)} does the best.
\begin{figure}[htbp]
	\centering
		\vspace{-2mm}
		\mbox{\includegraphics[width=0.5\textwidth]{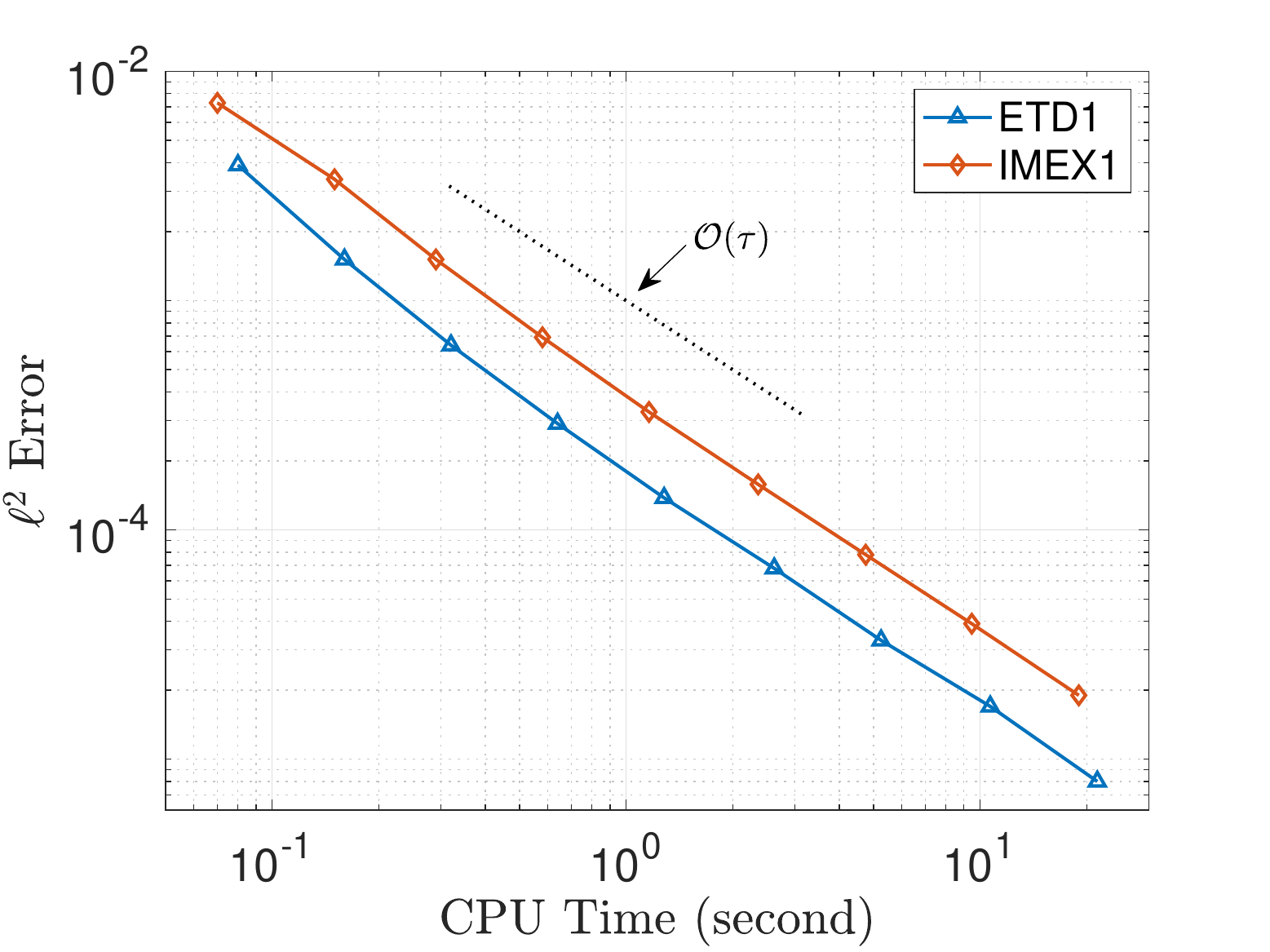}}\hspace{-4mm}
		\mbox{\includegraphics[width=0.5\textwidth]{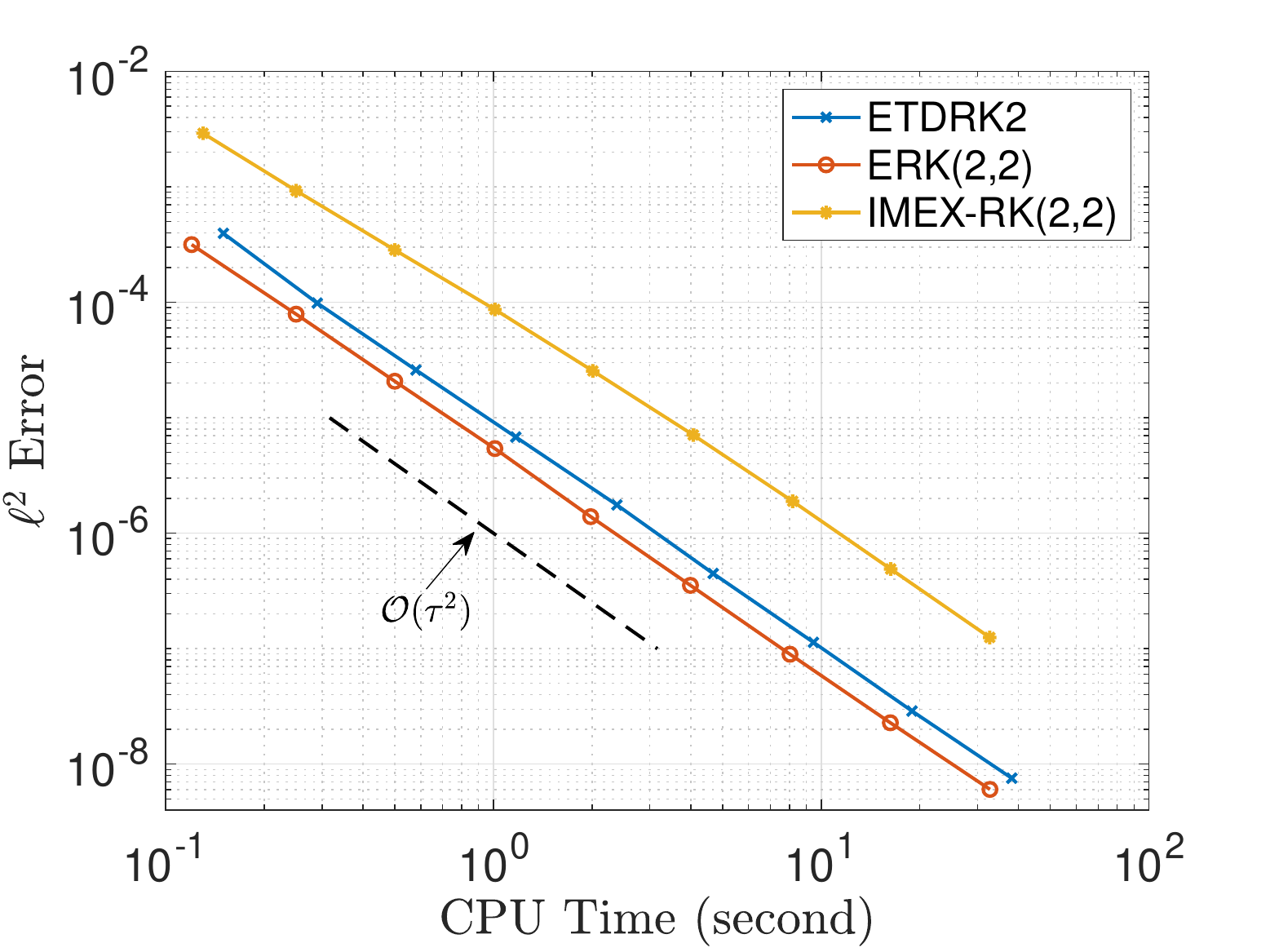}}
		\mbox{\includegraphics[width=0.5\textwidth]{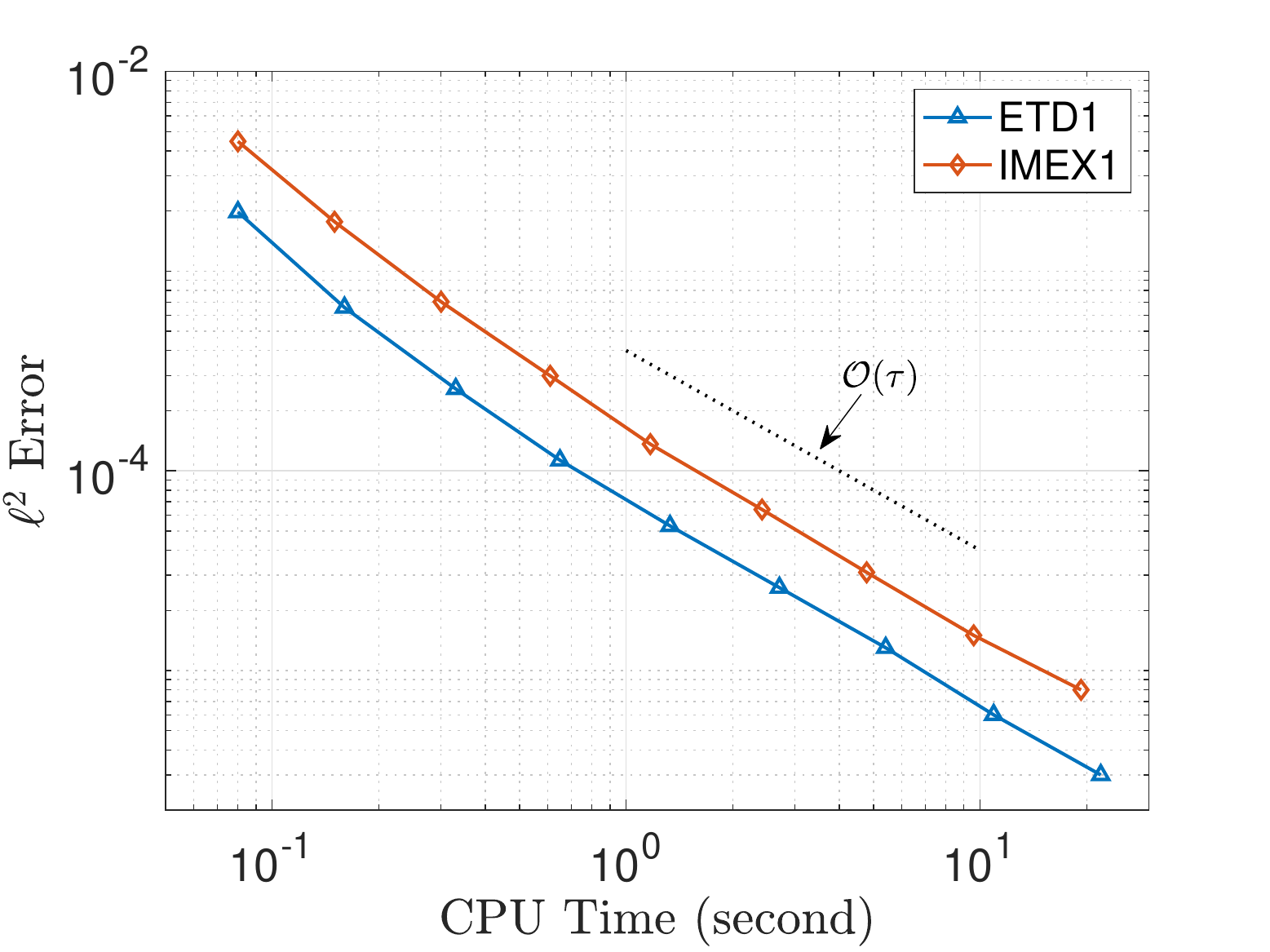}}\hspace{-4mm}
		\mbox{\includegraphics[width=0.5\textwidth]{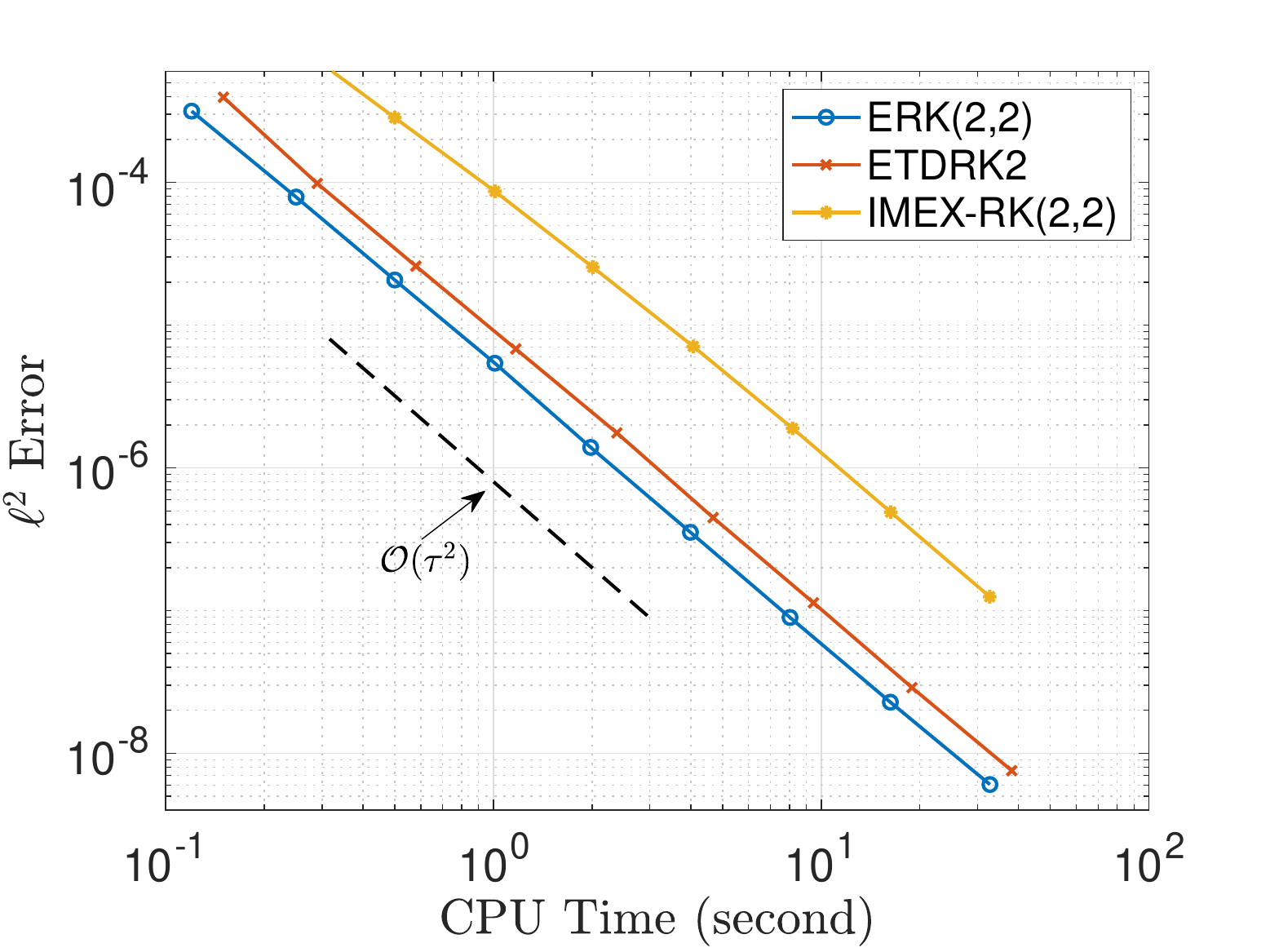}}
		\vspace{-2mm}
	\caption{Convergence of the first- and second-order Fourier pseudo-spectral schemes in time with fixed $\tau$ (left) and $\tau^2$ (right) for the 2-D SH equation. Top: $\varepsilon=0.25$. Bottom: $\varepsilon=0.025$. It is seen that the numerical error magnitude and computational cost of the ERK\textit{(2,2)} scheme are smaller than those of both the ETDRK2 and IMEX-RK(2,2) schemes, although they share the same convergence order.}
	\label{fig_err}
\end{figure}
\subsection{Energy stable test}
We simulate the pattern formation and evolution of the SH equation \eqref{1.1}, which arises in the Rayleigh--B{\'e}nard convection. We conduct the simulation on a rectangular domain $\Omega = [0, 100]^2$ from $T=0$ to $100$, subject to the following initial condition:
\begin{equation}\nonumber
	u_0(x,y) = 0.1 + 0.02\times\cos\left(\frac{\pi x}{100}\right) \sin\left(\frac{\pi y}{100}\right) + 0.05\times\sin\left(\frac{\pi x}{20}\right) \cos\left(\frac{\pi y}{20}\right).
\end{equation}

Setting the spatial mesh $256\times256$, we compare the original energy evolution by different time step sizes and energy-stable methods, which are depicted in \Cref{fig_energy}. In actual implementation, we discover that ERK(2,2) allows a rather mild time-step restriction, which will be useful in future work on large-scale scientific computing.

\begin{figure}[htbp]
	\centering
		\vspace{-2mm}
		\subfloat{\includegraphics[width=0.48\textwidth]{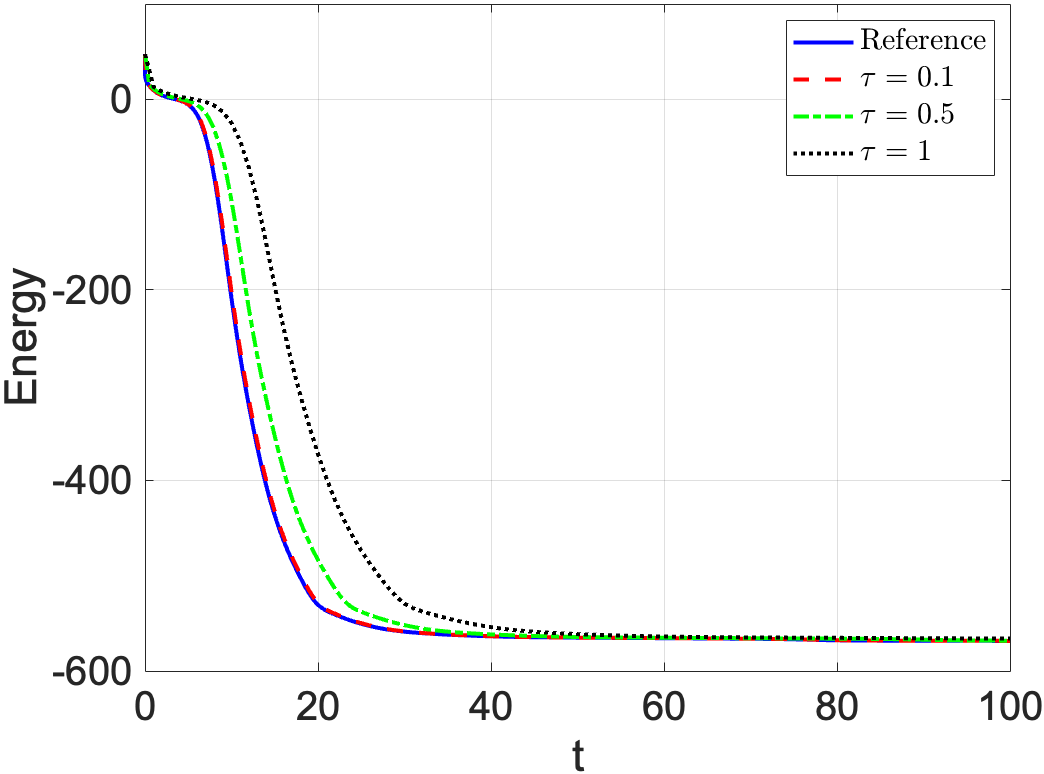}}
		\subfloat{\includegraphics[width=0.48\textwidth]{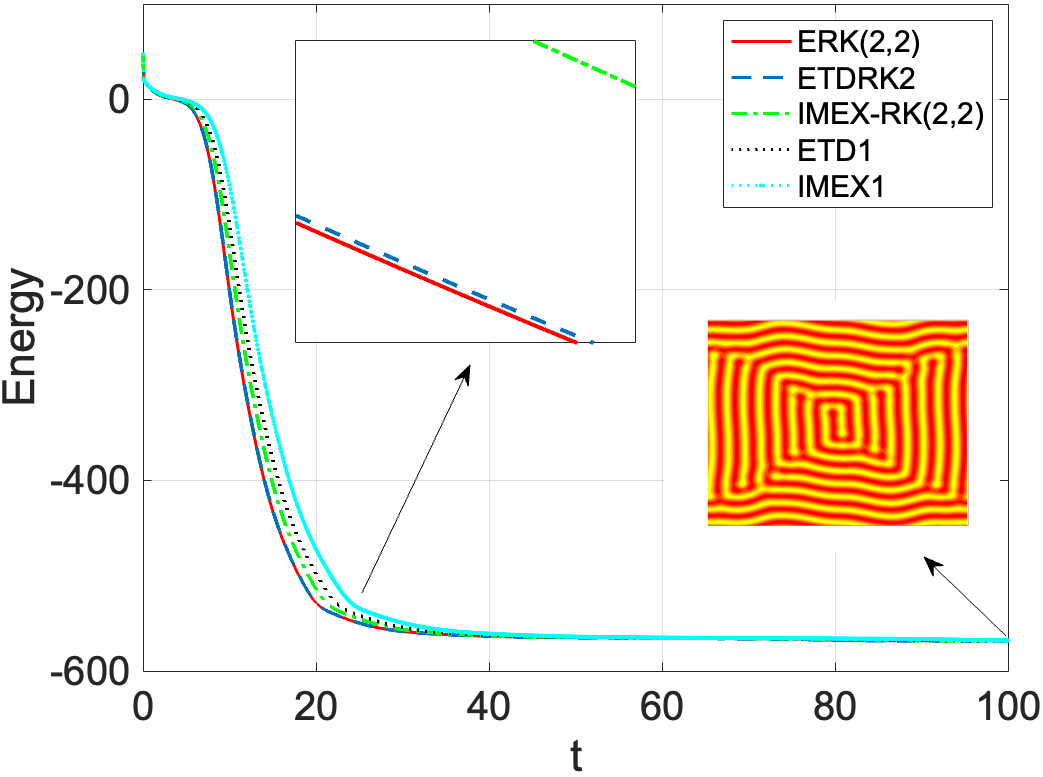}}
		\vspace{-2mm}
	\caption{The evolution of the original energy using ERK(2,2) with different time-step sizes (left) and a fixed size $\tau=0.1$ with various energy-stable methods (right) is shown. It is observed that, while only the red dashed line ($\tau=0.1$) approximately matches the reference line, the other two lines also exhibit the same energy-decreasing trend and eventually converge to the same steady state. From the right subplot, it can be seen that all methods maintain discrete energy stability over extended periods, but ERK(2,2) reaches the steady state more rapidly}
	\label{fig_energy}
\end{figure}
\subsection{Polycrystal growth in a supercooled liquid}
In the existing studies~\cite{swift1977, yang2021linear, sujian}, the polycrystal growth in a supercooled liquid was considered as an important 2-D benchmark test. Here, we look at the growth of three crystal nucleuses with the following initial data:
\begin{equation}\nonumber
	u_0(x,y) = 0.287 + \alpha\times\mathrm{rand}(x,y),
\end{equation}
in which $\mathrm{rand}(x,y)$ is the random number uniformly distributed between $-1$ and $1$, and $\alpha$ takes the values of $0.1,~0.2$, and $0.4$ for three crystal nucleuses locating at $(375,125)$, $(375,375)$, and $(125,250)$, respectively. The length of each nucleus is $10$. We also set the computational domain, spatial resolution, and time-step size as $(0,500)^2$, $512\times512$, and $0.5$, respectively. In this test, the parameter $\varepsilon$ is chosen to be $0.25$. \Cref{fig5} displays snapshots of the crystal micro-structure at several time instants. 
\begin{figure}[htbp]
	\centering
	\vspace{-2mm}
	\subfloat{\includegraphics[width=0.32\textwidth]{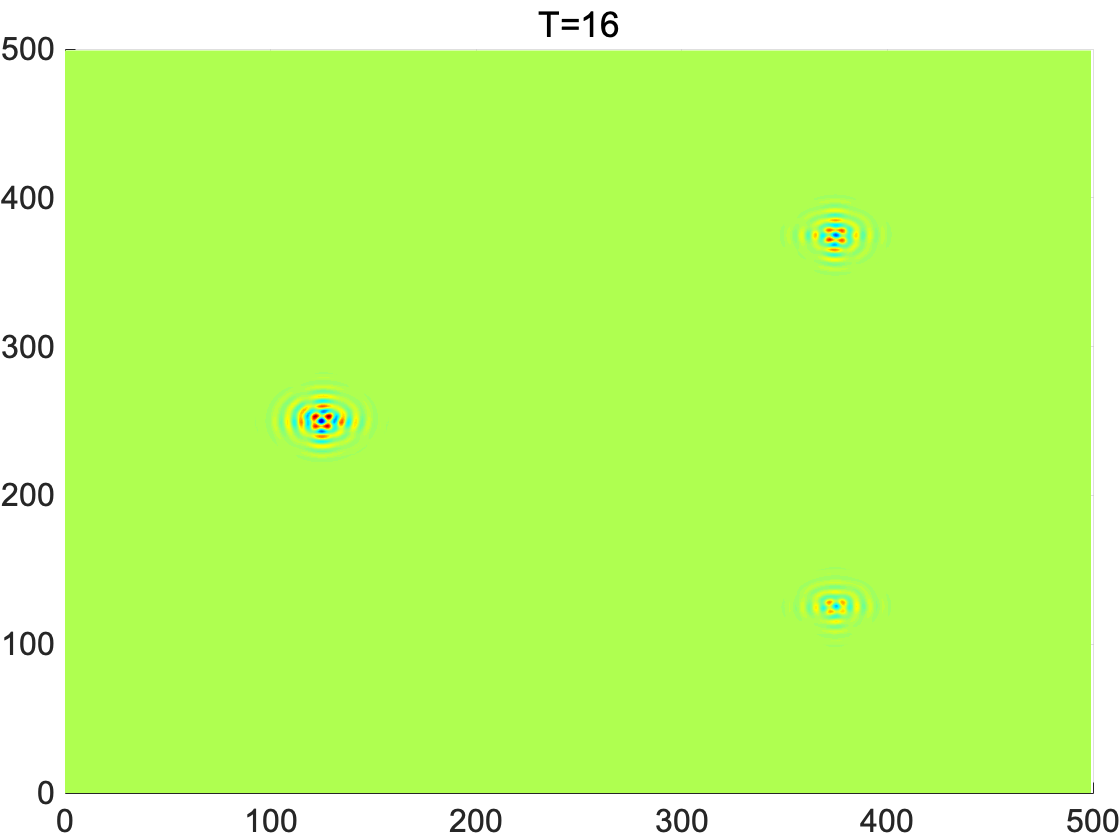}}\hspace{-1mm}
	\subfloat{\includegraphics[width=0.32\textwidth]{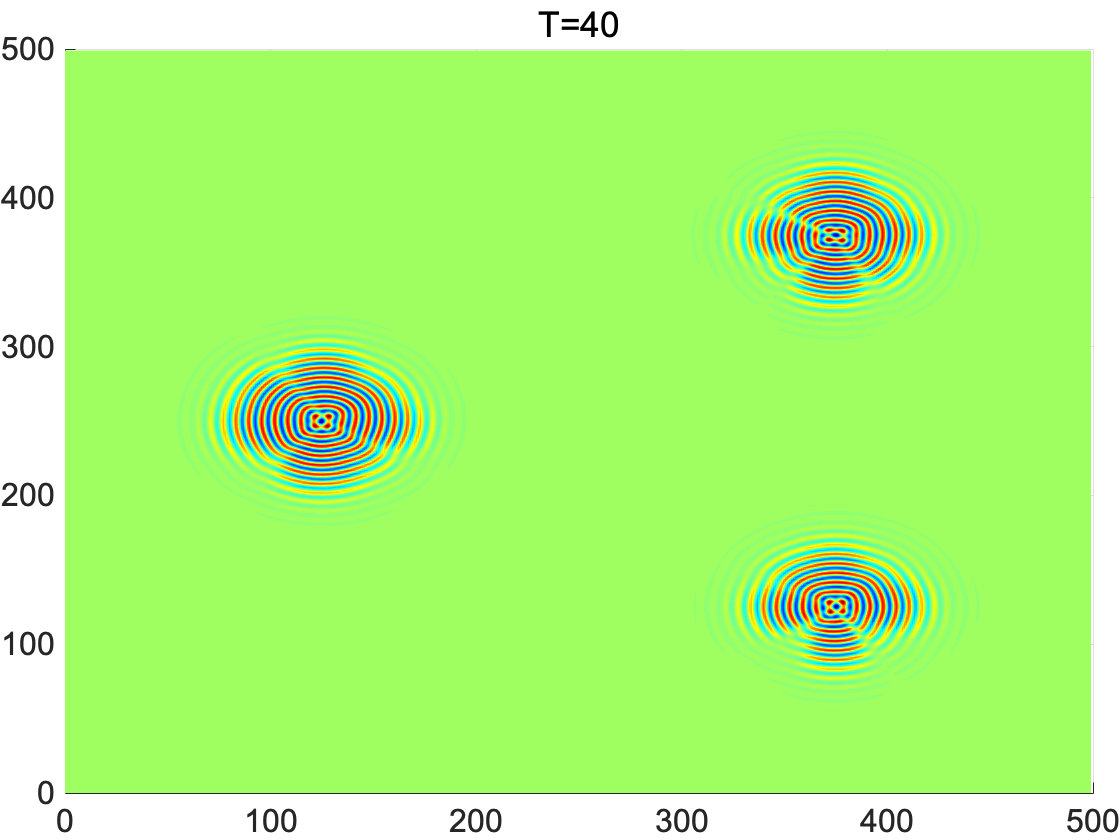}}\hspace{-1mm}
	\subfloat{\includegraphics[width=0.32\textwidth]{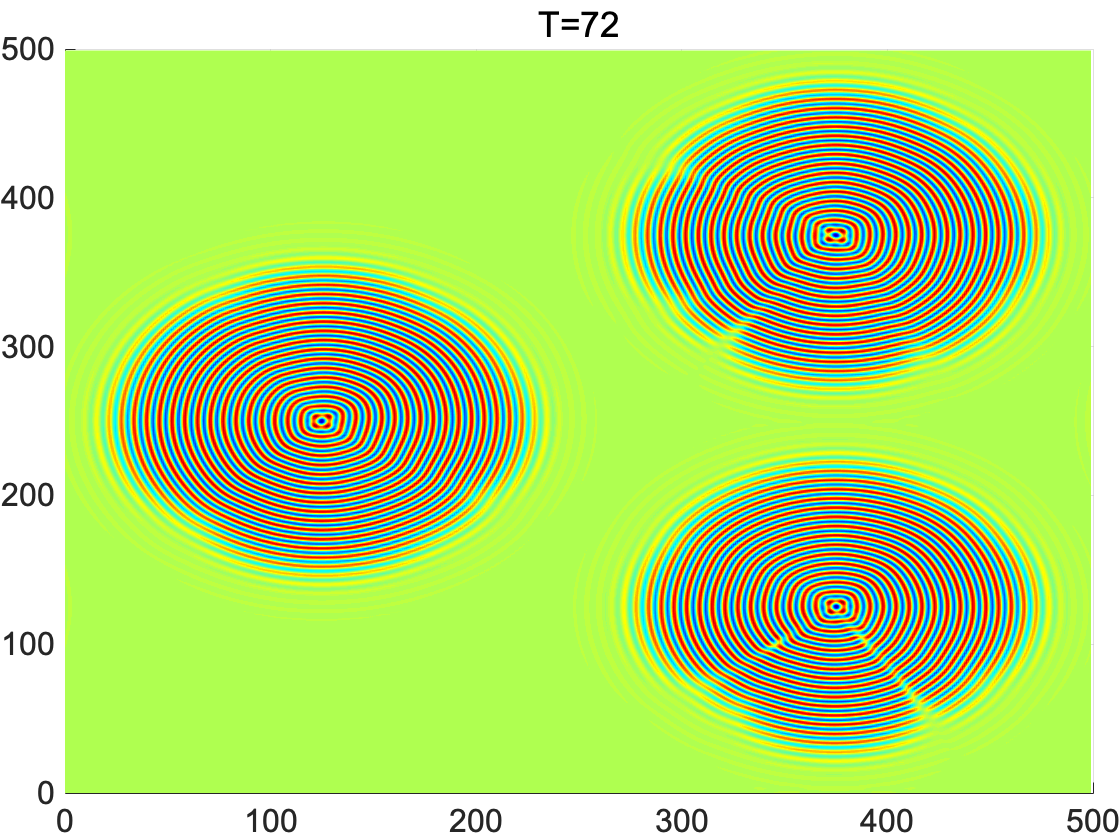}}
	\vspace{-3mm}
	\subfloat{\includegraphics[width=0.32\textwidth]{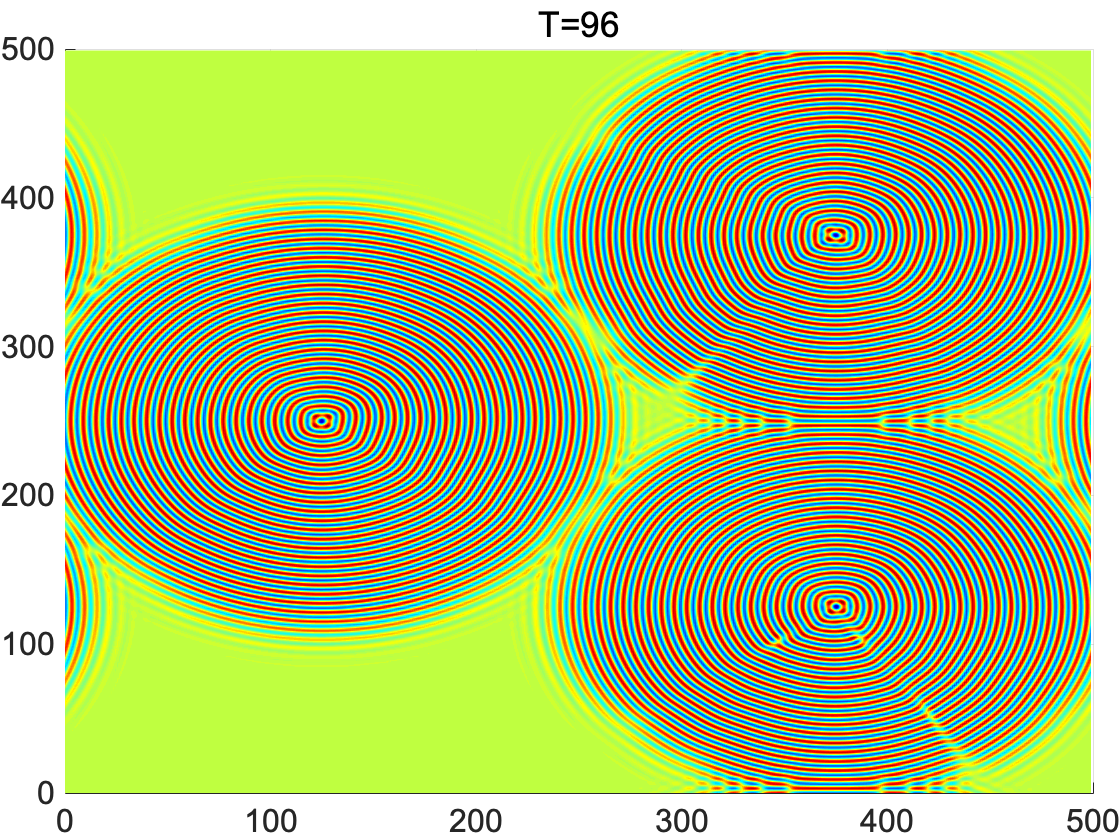}}\hspace{-1mm}
	\subfloat{\includegraphics[width=0.32\textwidth]{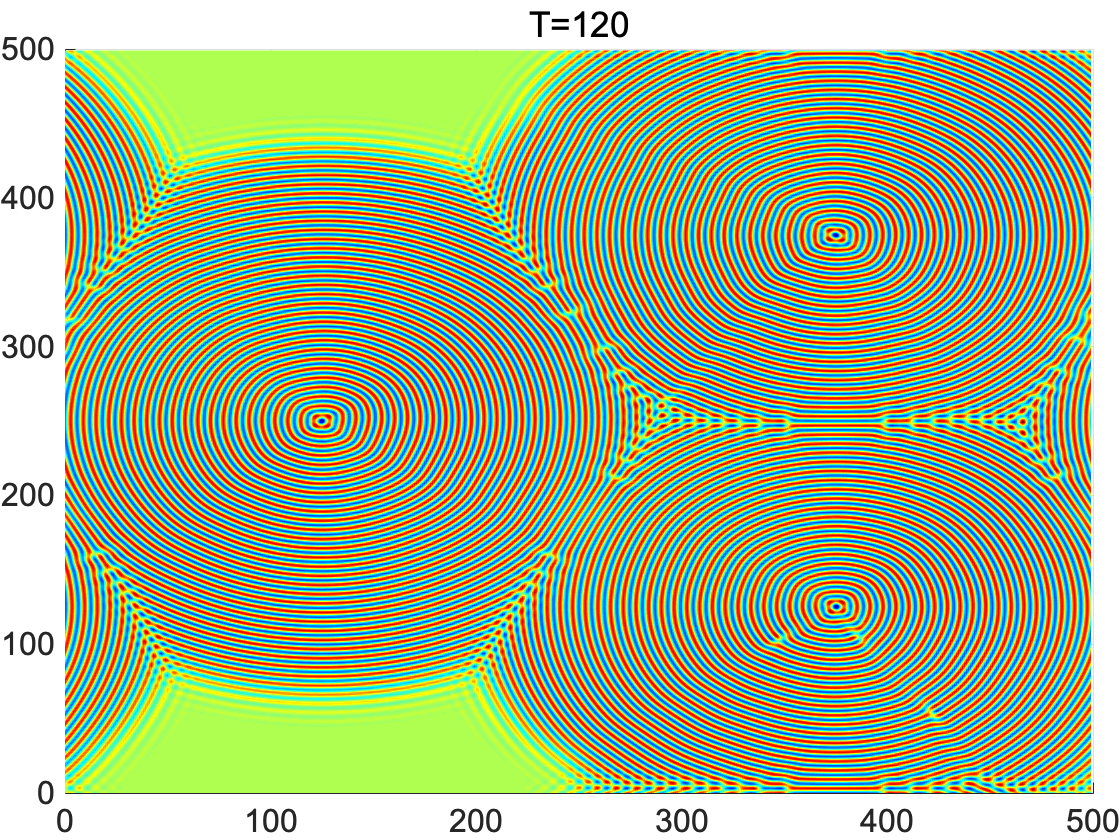}}\hspace{-1mm}
	\subfloat{\includegraphics[width=0.32\textwidth]{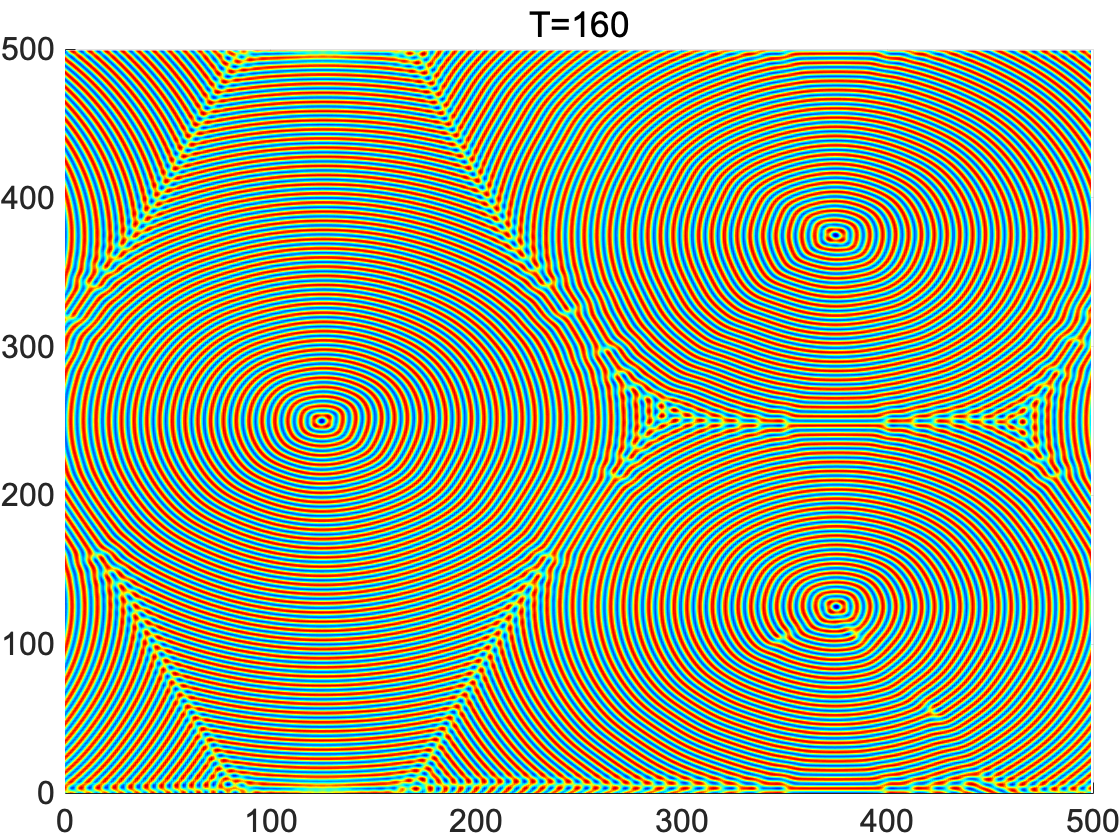}}
	\vspace{-2mm}
	\caption{Evolution of 2-D polycrystal growth in a supercooled liquid at $T = 16,~40,~72,~96,~120$, and $160$ computed by ERK(2,2). It can be seen that three different nuclei grains grow and eventually become sufficiently large to form grain boundaries.}
	\label{fig5}
\end{figure}
\section{Concluding Remarks}
\label{sec6}
Taking the analysis of the SH equation as a demonstration, we have proposed a new strategy for proving discrete energy stability without any preconditions. Further, to numerically overcome the difficulty caused by strong stiffness, we have devised a stabilization exponential Runge--Kutta (ERK) scheme, which is equipped with an appropriate linear stabilization term, preserving the dissipation property of the original energy. We determined the $\ell^{\infty}$ bounds of the numerical solution at all ERK stages so that global-in-time energy stability becomes available. Such an $\ell^\infty$ analysis was accomplished by the $\ell^2$ and $H_h^2$ estimates of the numerical solution at each ERK stage, with the help of summation-by-parts formulas in the Fourier pseudo-spectral space, the discrete Sobolev inequality and elliptic regularity, as well as the eigenvalue estimates in the Fourier space. The global-in-time stability analysis for the original energy is the \textbf{first such result} for SH-type equations. Furthermore, we have provided an optimal rate convergence analysis for the proposed scheme, under a sufficient regularity assumption for the exact solution. A few numerical results have also been presented in this paper. In the convergence test, it was discovered that ERK\textit{(2,2)} outperforms ETDRK2 and IMEX-RK(2,2), in terms of efficiency and accuracy. Besides, the long-time numerical simulation of pattern formation and further evolution have also validated the robustness of the proposed scheme. 
\section*{Acknowledgements}

This research was supported by the National Key R\&D Program of China [Grant No. 2020YFA0709803], the National Natural Science Foundation of China [Grant Nos. 12071481 and 12271523], the Defense Science Foundation of China [Grant No. 2021-JCJQ-JJ-0538], and the Science and Technology Innovation Program of Hunan Province [Grant Nos. 2021RC3082 and 2022RC1192].





\begin{appendices}
\appendix
\section{Proof of \Cref{proposition3.2}}
\label{secA}
It could be easily verified that 
\begin{equation}\nonumber
	\begin{aligned}
		&\left(-1+\lambda_{\ell,m}\right)^2+\kappa = 2 - 2\lambda_{\ell,m} + \lambda_{\ell,m}^2 + (\kappa-1) = \frac{2}{3} + \frac{4}{3} - 2\lambda_{\ell,m} + \frac{3}{4}\lambda_{\ell,m}^2 + \frac{1}{4}\lambda_{\ell,m}^2 + (\kappa-1)\\
		&=\frac{2}{3} + (\frac{2}{\sqrt{3}} - \frac{\sqrt{3}}{2}\lambda_{\ell,m})^2 + \frac{1}{4}\lambda_{\ell,m}^2+ (\kappa-1),
	\end{aligned}
\end{equation} 
so that
\begin{equation*}
	\Lambda_{\ell,m} = \left(-1+\lambda_{\ell,m}\right)^2+\kappa\ge \frac{1}{4}\lambda_{\ell,m}^2 + \frac{2}{3} + (\kappa-1) . 
\end{equation*}
This in turn leads to the following inequality: 
\begin{equation}\label{A3}
	\Vert G_i^{*} {f}\Vert_{2}^2\ge L^2\sum_{\ell,m=-K}^K\frac{1-\mathrm{e}^{-c_i\tau\Lambda_{\ell,m}}}{\tau\Lambda_{\ell,m}}[\frac{1}{4}\lambda_{\ell,m}^2 + (\kappa - 1) + \frac{2}{3}]\vert\hat{f}_{\ell,m}\vert^2.
\end{equation}
On the other hand, an application of Parseval's equality to the discrete Fourier expansion of $G_i^{\frac{1}{2}}\Delta_{N} {f}$ and $G_i^{\frac{1}{2}} {f}$ reveals that
\begin{align}
	\Vert G_i^{\frac{1}{2}}\Delta_{N} {f}\Vert_{2}^2 &= L^2\sum_{\ell,m=-K}^K\frac{1-\mathrm{e}^{-c_i\tau\Lambda_{\ell,m}}}{\tau\Lambda_{\ell,m}}\cdot\lambda_{\ell,m}^2\cdot\vert\hat{f}_{\ell,m}\vert^2\label{A4},\\
	\Vert G_i^{\frac{1}{2}} {f}\Vert_{2}^2 &= L^2\sum_{\ell,m=-K}^K\frac{1-\mathrm{e}^{-c_i\tau\Lambda_{\ell,m}}}{\tau\Lambda_{\ell,m}}\cdot\vert\hat{f}_{\ell,m}\vert^2\label{A5}.
\end{align}
Making a comparison between inequality \eqref{A3} and equations \eqref{A4}-\eqref{A5}, we conclude that inequality \eqref{eqn:proposition3.2(1)} has been established. Inequality \eqref{eqn:proposition3.2(2)} is a direct application of the above derivation process, and the details are skipped for the sake of brevity. This finishes the proof of \Cref{proposition3.2}.
\section{Proof of \Cref{proposition3.3}}
\label{secB}
The following expansion is assumed for the grid function $g$: 
\begin{equation}\nonumber
	g_{p,q} = \sum_{\ell,m=-K}^K\hat{g}_{\ell,m}\mathrm{e}^{2\pi\mathrm{i}(\ell x_p + my_q)/L}.
\end{equation}
In turn, the discrete Fourier expansion of $g-\varphi_0(c_1\tau L_{\kappa})f$ becomes
\begin{equation}\label{A1}
	(g - \varphi_0(c_i\tau L_{\kappa})f)_{p,q} = \sum_{\ell,m=-K}^K(\hat{g}_{\ell,m} - \mathrm{e}^{-c_i\tau\Lambda_{\ell,m}}\hat{f}_{\ell,m})\mathrm{e}^{2\pi\mathrm{i}(\ell x_p+my_q)/L},
\end{equation}
so that its discrete $\ell^2$ norm turns out to be
\begin{equation}\label{A2}
	\Vert g-\varphi_0(c_i\tau L_{\kappa})f \Vert_{2}^2 = L^2\sum_{\ell,m=-K}^K\vert \hat{g}_{\ell,m} - \mathrm{e}^{-c_i\tau\Lambda_{\ell,m}}\hat{f}_{\ell,m}\vert^2.
\end{equation}
Subsequently, a combination of equations \eqref{A1} and \eqref{A2} produces
\begin{equation}\nonumber
	\begin{aligned}
		& \tau\langle G_iL_{\kappa}f, \varphi_0(c_i\tau L_{\kappa})f\rangle  + \Vert g - \varphi_0(c_i\tau L_{\kappa})f\Vert_{2}^2 \\
		&= L^2\sum_{\ell,m=-K}^K\left((1-\mathrm{e}^{-c_i\tau\Lambda_{\ell,m}})\cdot\mathrm{e}^{c_i\tau\Lambda_{\ell,m}}\cdot\vert\mathrm{e}^{-c_i\tau\Lambda_{\ell,m}}\hat{f}_{\ell,m}\vert^2 + \vert \hat{g}_{\ell,m} - \mathrm{e}^{-c_i\tau\Lambda_{\ell,m}}\hat{f}_{\ell,m}\vert^2\right)\\
		&=L^2\sum_{\ell,m=-K}^K(1-\mathrm{e}^{-c_i\tau\Lambda_{\ell,m}})\left({\mathrm{e}^{c_i\tau\Lambda_{\ell,m}}\vert\mathrm{e}^{-c_i\tau\Lambda_{\ell,m}}\hat{f}_{\ell,m}\vert^2}\right.\\ &\left.{+ (1-\mathrm{e}^{-c_i\tau\Lambda_{\ell,m}})^{-1}\vert \hat{g}_{\ell,m} - \mathrm{e}^{-c_i\tau\Lambda_{\ell,m}}\hat{f}_{\ell,m}\vert^2}\right).\\
	\end{aligned}
\end{equation}
On the other hand, for each fixed mode frequency $(\ell,m)$, the following lower bound is clearly observed:
\begin{equation}\nonumber
	\begin{aligned}
		&\mathrm{e}^{c_i\tau\Lambda_{\ell,m}}a^2 + (1-\mathrm{e}^{-c_i\tau\Lambda_{\ell,m}})^{-1}b^2 = a^2+b^2 + (\mathrm{e}^{-c_i\tau\Lambda_{\ell,m}}-1)a^2 + [(1-\mathrm{e}^{-c_i\tau\Lambda_{\ell,m}})^{-1}-1]b^2\\
		&\ge a^2 + b^2 + 2ab = (a+b)^2,
	\end{aligned}
\end{equation}
for any $a,b\ge0$, in which the Cauchy inequality has been applied in the second step. Then we get 
\begin{equation}\nonumber
	\begin{aligned}
		& \mathrm{e}^{c_i\tau\Lambda_{\ell,m}}\vert\mathrm{e}^{-c_i\tau\Lambda_{\ell,m}}\hat{f}_{\ell,m}\vert^2 + 
		(1-\mathrm{e}^{-c_i\tau\Lambda_{\ell,m}})^{-1}\vert \hat{g}_{\ell,m} - \mathrm{e}^{-c_i\tau\Lambda_{\ell,m}}\hat{f}_{\ell,m}\vert^2\\
		&\ge(\vert\mathrm{e}^{-c_i\tau\Lambda_{\ell,m}}\hat{f}_{\ell,m}\vert + \vert\hat{g}_{\ell,m} - \mathrm{e}^{-c_i\tau\Lambda_{\ell,m}}\hat{f}_{\ell,m}\vert)^2\ge\vert \hat{g}_{\ell,m} \vert^2,
	\end{aligned}
\end{equation}
so that 
\begin{equation}\nonumber
	\begin{aligned}
		\tau\Vert G_iL_{\kappa}f,\varphi_0(c_i\tau L_{\kappa})f\Vert_{2} + \Vert g - \varphi_0(c_i\tau L_{\kappa})f\Vert_{2}^2\geq L^2\sum_{\ell,m=-K}^K(1-\mathrm{e}^{-c_i\tau\Lambda_{\ell,m}})\vert \hat{g}_{\ell,m}\vert^2
		=\tau\Vert G_i^{*}g\Vert_{2}^2.
	\end{aligned}
\end{equation}
Therefore, the proof of inequality \eqref{eqn:proposition3.3(1)} has been finished. The proofs of inequalities \eqref{eqn:proposition3.3(2)}, \eqref{eqn:proposition3.3(3)}, and \eqref{eqn:proposition3.3(4)} could be similarly derived as that of \eqref{eqn:proposition3.3(1)}, and the details are skipped to save space.
\section{Butcher-like tableaux of energy-stable methods}
We give the coefficients of the energy-stable methods that we used in the numerical experiments.
\begin{itemize}
	\item ETD1, also referred to as exponential Euler method:
	\begin{align}\label{eqn:etd1}
		\begin{array}
			{>{\centering\arraybackslash$} p{0.4cm} <{$} | >{\centering\arraybackslash$} p{0.6cm} <{$} }
			0 & 0 \\
			\hline
			1 & \varphi_1	
		\end{array}.
	\end{align}
	\item ETDRK2 reads as:
	\begin{align}\label{eqn:etdrk2}
		\begin{array}
			{>{\centering\arraybackslash$} p{0.4cm} <{$} | >{\centering\arraybackslash$} p{1.4cm} <{$} >{\centering\arraybackslash$} p{0.6cm} <{$}}
			0 & 0 & \\
			1 &  \varphi_{1} & 0\\
			\hline
			1 & \varphi_1-\varphi_{2} & \varphi_2	
		\end{array}.
	\end{align}
	\item  IMEX1, also referred to as first-order semi-implicit method:
	\begin{align}\label{eqn:imex1}
		\begin{array}
			{>{\centering\arraybackslash$} p{0.4cm} <{$} | >{\centering\arraybackslash$} p{0.5cm} <{$} >{\centering\arraybackslash$} p{0.5cm} <{$}}
			0 & 0 & 0 \\
			1 &  0 & 1\\
			\hline
			1 & 0 & 1	
		\end{array},\qquad
		\begin{array}
			{>{\centering\arraybackslash$} p{0.4cm} <{$} | >{\centering\arraybackslash$} p{0.5cm} <{$} >{\centering\arraybackslash$} p{0.5cm} <{$}}
			0 & 0 & 0 \\
			1 &  1 & 0 \\
			\hline
			1 & 1 & 0	
		\end{array}.
	\end{align}
	\item IMEX-RK(2,2) reads as (cf. \cite{1997imex}):
	\begin{align}\label{eqn:imex2}
		\begin{array}
			{>{\centering\arraybackslash$} p{0.4cm} <{$} | >{\centering\arraybackslash$} p{0.5cm} <{$} >{\centering\arraybackslash$} p{1.2cm} <{$} >{\centering\arraybackslash$} p{0.5cm} <{$}}
			0 & 0 & 0 & 0\\
			\gamma &  0 & \gamma & 0\\
			1 & 0 & 1-\gamma & \gamma\\
			\hline
			1 & 0 & 1-\gamma & \gamma	
		\end{array},\qquad
		\begin{array}
			{>{\centering\arraybackslash$} p{0.4cm} <{$} | >{\centering\arraybackslash$} p{0.5cm} <{$} >{\centering\arraybackslash$} p{1.2cm} <{$} >{\centering\arraybackslash$} p{0.5cm} <{$}}
			0 & 0 & 0  & 0 \\
			\gamma & \gamma & 0 & 0\\
			1 & \delta & 1-\delta & 0\\ 
			\hline
			1 & \delta & 1-\delta & 0	
		\end{array},
	\end{align}
	where $\gamma = \frac{2+\sqrt{2}}{2}$ and $\delta = \frac{2\gamma-1}{2\gamma}$. 
\end{itemize}
All the above schemes can preserve the \textbf{original} energy dissipation property for gradient flows including SH equation, see, e.g. \cite{LiX2024a, fuzhaohui,fuzhaohuiMC}.
\end{appendices}
\end{document}